\documentclass[11pt, english,english]{scrartcl}
\usepackage[applemac]{inputenc}
\usepackage{graphicx,epsfig,color}
\usepackage{mathtools,mathrsfs}
\usepackage{amsmath,amssymb,amsthm}

\usepackage{bookmark}
\usepackage{comment}
\excludecomment{extra}
\excludecomment{k}
\includecomment{krad}
\usepackage{hyperref}

\newcommand{\C}{ \mathbb{C}}
\newcommand{\R}{ \mathbb{R}}
\newcommand{\Z}{ \mathbb{Z}}

\newcommand{\N}{ \mathbb{N}}

\newcommand{\eqdef}{\coloneqq}
\newcommand{\Top}{\mathsf{T}}

\newcommand{\Dop}{{\text D}}

\newcommand{\dint}{{\mathrm d}}

\newcommand{\Fourier}{ \mathcal{F}} 

\newcommand{\bx}{{\boldsymbol x}}
\newcommand{\bw}{{\boldsymbol \omega}}

\newcommand{\bk}{{\mathbf k}}

\DeclareMathOperator*{\esssup}{ess\,sup}

\DeclareMathOperator*{\spann}{span}
\DeclareMathOperator*{\argmin}{arg\,min}


\def\V#1{{\mathbf{#1}}}         
\def\Spc#1{{\mathcal{#1}}}  
\def\Op#1{{\mathrm{#1}}}  
\def\ee{\text{e}} 
\def\jj{\mathrm{j}}

\def\Proj{\mathrm{Proj}} 
\def\Identity{\mathrm{Id}} %

\newcommand{\embedIso}{\xhookrightarrow{\mbox{\tiny \text{iso.}}}}

\renewcommand{\[}{\begin{equation}}
\renewcommand{\]}[1]{\label{eq:#1}\end{equation}}

\newtheorem{definition}{Definition}
\newtheorem{proposition}{Proposition}
\newtheorem{corollary}{Corollary}

\newtheorem{lemma}{Lemma}
\newtheorem{theorem}{Theorem}

\newtheorem{example}{Example}
\newtheorem{remark}{Remark}

\title{Explicit Representations for Banach Subspaces of Lizorkin Distributions}

\author{
Sebastian Neumayer\footnotemark[1] \and Michael Unser\footnotemark[1]}
\begin{document}

\maketitle

\renewcommand*{\thefootnote}{\fnsymbol{footnote}}
\footnotetext[1]{Biomedical Imaging Group, \'Ecole polytechnique f\'ed\'erale de Lausanne (EPFL),
	Station 17, CH-1015 Lausanne, {\text \{firstname.lastname\}@epfl.ch}. }
\renewcommand*{\thefootnote}{\arabic{footnote}}




%
\begin{abstract}
	\textbf{Abstract} The Lizorkin space is well suited to the study of operators like fractional Laplacians and the Radon transform.
	In this paper, we show that the space is unfortunately not complemented in the Schwartz space.
	In return, we show that it is dense in $C_0(\R^d)$, a property that is shared by the larger Schwartz space  and that turns out to be useful for applications.
	Based on this result, we investigate subspaces of Lizorkin distributions that are Banach spaces and for which a continuous representation operator exists.
	Then, we introduce a variational framework that involves these spaces and that makes use of the constructed operator.
	By investigating two particular cases of this framework, we are able to strengthen existing results for fractional splines and 2-layer ReLU networks.
	
	\textbf{Keywords:} Fractional splines, Inverse problems, Lizorkin space, Lizorkin distributions, Quotient spaces, ReLU networks, Variational problems
\end{abstract}


%

\section{Introduction}
This paper pertains to the Lizorkin space $\Spc S_\mathrm{Liz}(\R^d)$, which was first introduced in \cite{Lizorkin1963} for the investigation of partial differential equations.
It consists of the Schwartz functions $\Spc S(\R^d)$ for which all moments vanish.
For detailed expositions on the topic, we refer to \cite{Rubin1996,Samko2002}.
Surprisingly, $\Spc S_\mathrm{Liz}(\R^d)$ is still rather large, its closure under the $L_\infty$-norm being the space $C_0(\R^d)$.
Another attractive feature of $\Spc S_\mathrm{Liz}(\R^d)$ is that many non-invertible operators become invertible if their domain is restricted  to $\Spc S_\mathrm{Liz}(\R^d)$, which happen with the Radon transform and with fractional Laplacians.
This makes $\Spc S_\mathrm{Liz}(\R^d)$ well suited to theoretical analyses.
In the present paper, we show that $\Spc S_\mathrm{Liz}(\R^d)$ cannot be complemented in $\Spc S(\R^d)$.
In particular,  continuous projections onto $\Spc S_\mathrm{Liz}(\R^d)$ cannot exist.
This result is in sharp contrast to the periodic setting, where a projection actually exists.

The corresponding dual space $\Spc S^\prime_\mathrm{Liz}(\R^d)$ of Lizorkin distributions \cite{Yuan2010} is fairly large and has attracted increased interest over the past years (e.g., shearlet transform \cite{Bartolucci2020}, ridegelet transform  \cite{Kostadinova2014}, choice of activation functions in neural networks \cite{Sonoda2017}).
It is well known that $\Spc S^\prime_\mathrm{Liz}(\R^d)$ can be identified as the quotient space $\Spc S^\prime(\R^d)/\Spc P(\R^d)$, where $\Spc P(\R^d)$ denotes the space of polynomials.
Naturally, this leads to a representation problem if we want to make computations explicit, an issue that has not been addressed so far.
Here, our results directly imply that no continuous linear projector for the assignment of representatives can exist.
At first glance, this result appears to be quite discouraging as it implies that, in general, it is necessary to work with equivalence classes.
Fortunately, this can be circumvented if we consider appropriate Banach subspaces of $\Spc S^\prime_\mathrm{Liz}(\R^d)$.
Indeed, due to the density of $\Spc S_\mathrm{Liz}(\R^d)$ in $C_0(\R^d)$ and due to the Riesz theorem, the space of Radon measures is an embedded subspace of  $\Spc S^\prime_\mathrm{Liz}(\R^d)$ for which unique representations exist.
Further, we are able to provide a positive answer for more general cases if we restrict ourselves to subspaces  that can be equipped with a specific Banach-space structure.
In this case, we are able to provide a continuous representation operator for which the point evaluations are weak*-continuous.

The procedure to obtain these subspaces and the representatives is as follows:
Given a well understood pair of Banach spaces $(\Spc X, \Spc X^\prime)$, we construct Banach spaces $(\Spc X_{\Op T}, \Spc X_{\Op T}^\prime)$ through a linear homeomorphism $\Op T \colon \Spc S_1 \subset \Spc X \to \Spc S_2$ defined on some dense subspace of $\Spc X$.
The core advantage of our construction is that many properties carry over directly to $(\Spc X_{\Op T}, \Spc X_{\Op T}^\prime)$; for instance the set of the extreme points of $\Spc X_{\Op T}^\prime$ can be specified easily from those of $\Spc X^\prime$.
While our construction is abstract, it allows us to make use of the fact that many (differential) operators are homeomorphisms on $\Spc S_\mathrm{Liz}(\R^d)$ due to the density of  $\Spc S_\mathrm{Liz}(\R^d)$ in $\Spc X = C_0(\R^d)$.
Unfortunately, the space $\Spc X_{\Op T}^\prime$ usually still consists of mere equivalence classes.
Therefore, as second step, we formulate conditions under which we can identify the elements of $\Spc X_{\Op T}^\prime$  using a representation operator.
These conditions are fulfilled when the Green's function of the operator $\Op T$ is sufficiently regular.
Overall, this framework enables us to design a rich class of interesting new norms for which the related Banach subspaces of Lizorkin distributions have a continuous representation operator.
This makes our framework usable for applications.

Within the proposed setting, we study variational problems involving the constructed Banach spaces and the general representer theorems established in \cite{Unser2021,Unser2022}.
The fact that our abstract formulation involves spaces whose elements are equivalence classes  can be circumvented by the application of our representation operator.
We investigate two special cases for which the formulations become explicit.
First, we revisit fractional splines in arbitrary dimensions, which have been investigated before in \cite{Fageot2020tv,Unser2000a,Unser2017}.
These splines are a generalization of the traditional polynomial splines \cite{Schoenberg1946} and preserve most of their properties.
Note that (fractional) splines are still fashionable and recently found their way into neural network research \cite{Bohra2020,Fey2018,Parhi2021b}.
Although this is not included in our discussion, we may also use the model to study polynomial splines.
Overall, our approach leads to a unifying setting that includes a straightforward extension to the multivariate case.

As second example, we strengthen the representation results  for 2-layer ReLU networks established by Parhi and Nowak \cite{Parhi2021} and Bartolucci et al.\ \cite{Bartolucci2021}, which builds up on the univariate case investigated in \cite{Savarese2019}.
The involved norm was also studied from a theoretical point of view in \cite{Ongie2020b}.
While proofs in these works rely on a general result on the existence of sparse solutions for variational problems by Bredies and Carioni \cite{Bredies2020}, we are additionally able to identify the solution set as being the weak* closure of certain sparse solutions.
Similarly to \cite{Bartolucci2021}, our construction is related to reproducing-kernel Banach spaces \cite{Combettes2018,Lin2019,Zhang2009} since our representation operator is constructed using a kernel and the point evaluations are continuous.
The key ingredient that enables us to strengthen the results of these prior works is that we actually construct a predual space for the optimization domain, which enables us to use our proposed variational framework.

 The paper is organized as follows: The necessary preliminaries are provided in Section~\ref{sec:prelim}.
Then, we proceed with a discussion of $\Spc S_\mathrm{Liz}(\R^d)$ in Section~\ref{sec:LizorkinSpaces} and show that a continuous projection onto $\Spc S_\mathrm{Liz}(\R^d)$ cannot exist.
This part is complemented with a short discussion of the periodic case.
In Section~\ref{sec:LizorkinRep}, we identify subspaces of Lizorkin distributions for which a continuous representation operator exists.
Next, we relate these subspaces to several interesting research questions in Section~\ref{sec:VariationalProb}.
As warm-up, we investigate the construction of periodic (fractional) splines in Section~\ref{sec:PeriodicLizSpline}.
Here, no representation mechanism is necessary as we can use the projector.
Then, we introduce in Section~\ref{sec:VarFramework} our general variational framework involving the constructed Banach spaces, for which we detail two specific cases:
The delicate case of non-periodic (fractional) splines in Section~\ref{sec:LizSplines}; and a representer theorem for 2-layer ReLU neural networks in Section~\ref{sec:RadonReLU}.
Finally, conclusions are drawn in Section~\ref{sec:Conclusions}.

\section{Mathematical Preliminaries}\label{sec:prelim}
\label{Sec:Notations}
In this paper, we consider functions $f \colon \R^d \to \C$.
To describe their partial derivatives, we use the multi-index 
$\V k=(k_1,\dots,k_d) \in \N^d$ with the notational conventions $\V k!=\prod_{n=1}^d k_n!$, $|\V k|=k_1+\cdots+k_d$, $\V x^{\V k}=\prod_{n=1}^d x_n^{k_n}$ for any $\V x \in \R^d$, and
\begin{equation}
	\partial^{\V k} f(\V x)=\frac{\partial^{|\V k|}f(x_1,\dots,x_d)}{\partial^{k_1}_{x_1} \cdots \partial^{k_d}_{x_d}}.
\end{equation}
This enables us to write the multidimensional Taylor expansion around $\V x_0$ of an analytical function $f\colon \R^d \to \C$ in compact form as
\begin{equation}
f(\V x)=\sum_{n=0}^\infty \sum_{|\V k|=n} \frac{\partial^\V k f(\V x_0)}{\V k! } (\V x- \V x_0)^\V k.
\end{equation}

The Schwartz space \cite{Schwartz1966} of smooth and rapidly decreasing functions $\varphi\colon \R^d \to \C$ equipped with the usual Fr\'echet-Schwartz topology is denoted by $\Spc S(\R^d)$.
This space is an algebra for the multiplication as well as the convolution product.
Additionally, it is closed under translation, differentiation, and multiplication by polynomials.
Its continuous dual is the space of tempered distributions $\Spc S'(\R^d)$.
Moreover, $\Spc S^\prime(\R^d)$ (as well as $\Spc S(\R^d)$) is a nuclear Montel space, where a sequence in $\Spc S^\prime(\R^d)$ converges with respect to the strong dual topology if and only if it converges in the weak* topology.
Therefore, it does not actually matter which of the two topologies we choose for $\Spc S^\prime(\R^d)$. 
The Montel property also implies that $\Spc S(\R^d)$ is reflexive, which means that there exists an isomorphism between the topological vector spaces $\Spc S^{\prime\prime}(\R^d)$ and $\Spc S(\R^d)$.
Note that the Lebesgue spaces $L_p(\R^d)$ for $p\in[1,\infty)$ are the completion of the set $\Spc S(\R^d)$ under the $L_p$-norm $\|\cdot\|_{L_p}$.
For $p=\infty$, we have that \smash{$\overline{(\Spc S(\R^d),\|\cdot\|_{L_\infty})}=C_0(\R^d)$}, namely, the space of continuous functions that vanish at infinity.
The dual of $C_0(\R^d)$ is the space $\Spc M(\R^d)=\{f \in \Spc S'(\R^d): \|f\|_{\Spc M}<\infty\}$ of bounded 
Radon measures with norm
\begin{equation}
\|f\|_{\Spc M}=\sup_{\varphi \in \Spc S(\R^d): \|\varphi\|_{L_\infty}\le 1} \langle f, \varphi \rangle.
\end{equation}
The latter is an isometrically-embedded superset of $L_1(\R^d)$, which implies that  $\|f\|_{L_1}= \|f\|_{\Spc M}$ for all $f \in L_1(\R^d)$.
We also need the weighted Lebesgue space $L_{\infty,\alpha}(\R^d)$, $\alpha \geq 0$,  defined via the weighted norm $\Vert f \Vert_{\infty,\alpha} \coloneqq \esssup_{\V x \in \R^d} \vert f(\V x) \vert (1 + \Vert \V x \Vert_2)^{-\alpha}$, which consists of functions that grow with order at most $\alpha$.

The Fourier transform $\Fourier \colon L_1(\R^d) \to C_0(\R^d)$ of a function $\varphi \in L_1(\R^d)$ is defined as
\begin{align}
\widehat \varphi(\bw)\coloneqq \Fourier\{\varphi\}(\bw)=\frac{1}{(2 \pi)^d} \int_{\R^d} \varphi(\V x) \ee^{-\jj \langle \bw, \V x \rangle} \dint \V x.
\end{align}
As the Fourier transform $\Fourier \colon  \Spc S(\R^d) \to \Spc S(\R^d)$ is an isomorphism, it can be extended by duality to $\Spc S'(\R^d)$.
Specifically, $\widehat f \in \Spc S'(\R^d)$ is the (unique) {\em generalized Fourier transform} of $f\in \Spc S'(\R^d)$ if and only if $\langle \widehat f, \varphi \rangle=\langle f, \widehat\varphi \rangle$ for all $\varphi  \in \Spc S(\R^d)$.
Finally, we note that the analytic Schwartz functions form a dense subset of $\Spc S(\R^d)$, which can be seen as follows.
As the smooth and compactly supported functions $\Spc D(\R^d)$ are dense in $\Spc S(\R^d)$, we also get that $\Fourier (\Spc D(\R^d))$ is dense in $\Spc S(\R^d)$.
Due to the Paley-Wiener theorem, the Fourier transform of any $f \in \Spc D(\R^d)$ is analytic and also entire.
Hence, these functions are dense.

The simplest way to specify fractional derivatives or integrals is to describe their action in the Fourier domain. 
Let us start with $d=1$. The one-dimensional fractional derivative $\Op D^\alpha\colon \Spc S(\R) \to \Spc S^\prime(\R)$ of order $\alpha\ge0$ is defined as
\begin{align}
	\label{Eq:FracDeriv}
	\Op D^{\alpha}\{\varphi\}(t) =\Fourier^{-1}\bigl\{(\jj \cdot)^{\alpha} \widehat \varphi\bigr\}(t).
\end{align}
For $\alpha=n\in \N$, $\Op D^n=\frac{\dint^n}{\dint t^n}$ coincides with the classical $n$th order derivative.
Definition~\eqref{Eq:FracDeriv} is also valid for negative orders, in which case it yields a fractional integral \cite{Unser2000a}.
In fact, the impulse response of $\Dop^{-\alpha}$
is the Green's function of $\Dop^\alpha$, which is given by
\begin{align}
	\rho_{\alpha}(t)=\Fourier^{-1}\left\{\frac{1}{(\jj \cdot)^{-\alpha}} \right\}(t)=\begin{cases}
		\frac{t_+^{\alpha-1}}{\Gamma(\alpha)},& \alpha-1\in \R^{+} \backslash \N\\
		\tfrac{\mathrm{sgn}(t)}{2} \frac{t^{n}}{ n!},& \alpha-1=n\in \N.
	\end{cases}
\end{align}
Likewise, the fractional Laplacian $(-\Delta)^{\alpha/2}$ of order $\alpha\in(1,\infty)$ is the linear-shift-invariant operator (LSI) whose frequency response is $\|\bw\|^{\alpha}$. Its inverse is the fractional integrator $(-\Delta)^{-\alpha/2}$, which corresponds to a frequency-domain multiplication by $\|\bw\|^{-\alpha}$.
Fractional derivatives and Laplacians are part of the same family of operators (isotropic LSI and scale-invariant) with their distributional impulse response for $\alpha>d$ being given by
\begin{align}
	k_{\alpha,d}(\V x)=\Fourier^{-1}\left\{\frac{1}{\|\cdot\|^{\alpha} } \right\}(\V x)=\begin{cases}
		(-\Delta)^{-n} \{\delta\}, & \alpha/2=n \in \N\\
		B_{n,d} \; \|\V x\|^{2n} \log(\|\V x\|) , & \alpha-d=2n \in 2\N\\
		A_{\alpha,d} \; \|\V x\|^{\alpha-d},& \alpha-d \notin 2\N
	\end{cases}
	\label{Eq:GreenLap}
\end{align}
with 
constants
$A_{d,\alpha}=
\frac{\Gamma( (d-\alpha)/2)}{2^\alpha \pi^{d/2} \Gamma(\alpha/2)}$
and \smash{$B_{d,n}=
\frac{(-1)^{1+n} }
{2^{2n+d-1}\pi^{d/2}  \Gamma( n+d/2) n! }$}.
For a more detailed exposition on the topic, we refer to \cite{GelfandShilov1964,Samko1993,Stinga2019}.


\section{Lizorkin Spaces}\label{sec:LizorkinSpaces}
The Lizorkin space $\Spc S_\mathrm{Liz}(\R^d)$ is the closed subspace of $\Spc S(\R^d)$ that consists of the functions whose moments
of any order $\V k$ are zero, so that
\begin{align}
\Spc S_\mathrm{Liz}(\R^d)=\Bigl\{\varphi \in \Spc S(\R^d): \int_{\R^d} \V x^\bk \varphi(\V x) \dint \V x=0, \forall \V k\in \N^d\Bigr\}.
\end{align}
A nice overview with properties of $\Spc S_\mathrm{Liz}(\R^d)$ is given in \cite{Troyanov2009}.
Equivalently, we can describe these functions in the Fourier domain through
\[\widehat{\Spc S}_\mathrm{Liz} (\R^d) = \Fourier\bigl(\Spc S_\mathrm{Liz} (\R^d)\bigr) = \bigl\{\psi \in \Spc S(\R^d) : \partial^{\V k} \psi(\V 0) = 0 \quad \forall \V k  \in \N^d\bigr\}.\]{a}
Although closed subspaces of reflexive topological vector spaces are in general not reflexive, this property holds for Fr\'echet spaces.
Hence, the spaces $\Spc S_\mathrm{Liz}(\R^d)$ and $\widehat{\Spc S}_\mathrm{Liz} (\R^d)$ are reflexive.
Further, we have for all $\varphi \in \Spc S_\mathrm{Liz}(\R^d)$, $\V x_0 \in \R^d$ and $a \in \R$ that $\varphi(\cdot-\V x_0) \in \Spc S_\mathrm{Liz}(\R^d)$ and $\varphi(\cdot/a) \in \Spc S_\mathrm{Liz}(\R^d)$.
Finally, we note that $\Spc S_\mathrm{Liz}(\R^d) \cap \Spc D(\R^d)=\{0\}$. Indeed, if $\varphi \in \Spc D(\R^d)$, then 
$\widehat \varphi$ is entire and hence equal to its Maclaurin expansion.
But if 
$\varphi \in \Spc S_\mathrm{Liz}(\R^d)$, then the Taylor series of $\widehat \varphi$ is $0$.

We are going to show that $\Spc S_\mathrm{Liz}(\R^d)$ cannot be complemented in $\Spc S(\R^d)$; in other words,
a continuous projector $\Op P_\mathrm{Liz}\colon  \Spc S(\R^d) \to \Spc S_\mathrm{Liz}(\R^d)$ cannot exist.
Before we prove this negative result, we first discuss the easier case of periodic Lizorkin functions, for which a continuous  projection actually exists.

\subsection{Periodic Lizorkin Spaces}
The functions of interest are $T$-periodic and typically specified only over their main period $\mathbb{T}=[0,T]$.
The corresponding space of test functions is \smash{$\Spc S(\mathbb{T})=C^{\infty}_\textup{perio}(\mathbb{T})$}, which is in one-to-one correspondence with the Fréchet space of rapidly decaying sequences\footnote{This space is denoted by ``$s$'' in \cite{Grothendieck1955}. It is the discrete analog of the Schwartz space $\Spc S(\R)$.} $\Spc S(\Z)$ via the Fourier homeomorphism \cite{Fageot2019,Treves2006}.
More precisely, there are Fourier coefficients $\widehat \varphi[\cdot] \in \Spc S(\Z)$ such that
\begin{align}
	\varphi(t)=\sum_{n \in \Z} \widehat \varphi[n]\ee^{\jj n \omega_0 t} \in \Spc S(\mathbb{T})
\end{align}
with $\omega_0=\frac{2 \pi}{T}$.
This expansion of $\varphi$ is unique and $\widehat \varphi[n]= \langle \varphi, \ee^{-\jj n \omega_0 \cdot}\rangle_\mathbb{T}$, where $ \langle f, g  \rangle_\mathbb{T}  = \frac{1}{T}\int_\mathbb{T} f(t) g(t)\dint t$. The continuous dual of 
$\Spc S(\mathbb{T})$ is the space of periodic distributions $\Spc S'(\mathbb{T})=\Spc S'_\textup{perio}(\R)$, which is itself homeomorphic to the space $\Spc S'(\Z)$ of slowly growing sequences.
Indeed, it holds \smash{$f \in \Spc S'(\mathbb{T}) \Leftrightarrow \widehat f[\cdot]\in \Spc S'(\Z)$}, where \smash{$\widehat f[n]$} denotes the $n$-th Fourier coefficient of $f$.

To ensure invertibility of the continuous fractional-derivative operator $\Op D^{\alpha}\colon \Spc S(\mathbb{T}) \to \Spc S(\mathbb{T})$ given by
\begin{align}
	\label{Eq:DalphaPerio}
	\Op D^{\alpha}\{\varphi\}(t) =\sum_{n\in \Z} (\jj \omega_0 n)^{\alpha} \widehat \varphi[n] \ee^{\jj n \omega_0 t},
\end{align}
we restrict ourselves to the subspace
\begin{align}
	\label{Eq:LizPerio}
	\Spc S_0(\mathbb{T})=\Bigl \{\varphi \in \Spc S(\mathbb{T}): \langle 1, \varphi  \rangle_\mathbb{T}  =0\Bigr\},
\end{align}
which inherits the nuclear topology from $\Spc S(\mathbb{T})$. While \eqref{Eq:LizPerio}
imposes a restriction only on the mean value of $\varphi$, the resulting space $\Spc S_0(\mathbb{T})$ is the proper periodic counterpart of the $\Spc S_\mathrm{Liz}(\R)$,  since the only periodic polynomials are constants.
The periodic setting is simple, in that $\Spc S_0(\mathbb{T})$ is 1-complemented in $\Spc S(\mathbb{T})$ with $\Spc S(\mathbb{T})=\Spc S_0(\mathbb{T}) \oplus \Spc P_0$ and
\begin{align}
	\Spc P_0=\{b_0 \cdot 1: b_0 \in \R\} \subset \Spc S(\mathbb{T}).
\end{align}
Correspondingly, we introduce the continuous projection $\Op P_0\colon \Spc S(\mathbb{T}) \to \Spc S_0(\mathbb{T})$ with
\begin{align}
	\label{Eq:ProjPerio}
	\Op P_0\{\phi\}=\varphi - \langle 1, \varphi  \rangle_\mathbb{T} 1.
\end{align}
As $\Spc S_0(\mathbb{T})=\Op P_0( \Spc S(\mathbb{T}))$, its dual is
$\Spc S'_0(\mathbb{T})=\Op P^\ast_0( \Spc S'(\mathbb{T}))$ with $\Spc S'(\mathbb{T})=\Spc S'_0(\mathbb{T}) \oplus \Spc P_0$.
Here, we can identify $\Spc P_0'=\Spc P_0$ because the space is spanned by $1 \in \Spc S(\mathbb{T})\subset \Spc S'(\mathbb{T})$ with $\langle 1,1\rangle_\mathbb{T}=1$.
The latter property also implies that $\Op P^\ast_0=\Op P_0$, which makes the projection~\eqref{Eq:ProjPerio} also applicable to periodic distributions.

As the space $\Spc P_0$ of constant polynomials is indeed  the null space of $\Dop^{\alpha}\colon \Spc S(\mathbb{T}) \to \Spc S(\mathbb{T})$ with $\alpha>0$, we can restrict $\Dop^{\alpha}$ to
a homeomorphism $\Op D^\alpha\colon \Spc S_0(\mathbb{T})\to \Spc S_0(\mathbb{T})$ for any $\alpha\in \R$.
By duality, the same holds true on $\Spc S'_0(\mathbb{T})$ with the Fourier-domain definition
\eqref{Eq:DalphaPerio} of $\Dop^{\alpha}$ being applicable to periodic distributions as well.
In particular, the fractional integrator $\Op D^{-\alpha}\colon \Spc S'_0(\mathbb{T}) \to \Spc S'_0(\mathbb{T})$ of order $\alpha\ge 0$ is given by
\begin{align}
	\label{Eq:FracInt}
	\Op D^{-\alpha}\{f\}(t) =\sum_{n\in \Z\backslash\{0\}} \frac{1}{(\jj n \omega_0)^{\alpha}} \widehat f[n] \ee^{\jj n \omega_0  t}.
\end{align}

\subsection{Nonexistence of a Continuous Projector \texorpdfstring{$\Op P_\mathrm{Liz}\colon \Spc S(\R^d) \to \Spc S_\mathrm{Liz} (\R^d)$}{P\_Liz: S(Rd) -> S\_Liz(Rd)}}
\label{sec:NonExistence}
To prove the nonexistence of a continuous linear projection onto the Lizorkin space $\Spc S_\mathrm{Liz} (\R^d)$, we first show that the closed set $\mathcal P(\R^d)$ is not complemented in $\Spc S^\prime (\R^d)$.
\begin{theorem}\label{thm:exist_proj_dual}
	There exists no topological complement of $\mathcal P(\R^d)$ in $\Spc S^\prime (\R^d)$.
\end{theorem}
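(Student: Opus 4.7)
The plan is to reduce the statement to the non-existence of a topological complement of $\Spc S_\mathrm{Liz}(\R^d)$ inside $\Spc S(\R^d)$ via reflexivity, and then exhibit a concrete obstruction coming from sequence spaces. Suppose, for contradiction, that $\Op Q \colon \Spc S^\prime(\R^d) \to \Spc S^\prime(\R^d)$ is a continuous projection with range $\Spc P(\R^d)$. Since $\Spc S(\R^d)$ is reflexive and Montel, standard duality theory yields a continuous transpose $\Op Q^\ast \colon \Spc S(\R^d) \to \Spc S(\R^d)$ determined by $\langle f, \Op Q^\ast \varphi\rangle = \langle \Op Q f, \varphi\rangle$. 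A direct check shows that $\Op Q^\ast$ is itself a projection and that its kernel equals $\Spc P(\R^d)^\perp = \Spc S_\mathrm{Liz}(\R^d)$, so that its range $\Spc Y \coloneqq \Op Q^\ast\bigl(\Spc S(\R^d)\bigr)$ is a topological complement of $\Spc S_\mathrm{Liz}(\R^d)$ in $\Spc S(\R^d)$; in particular, $\Spc Y$ is topologically isomorphic to the Fr\'echet quotient $\Spc S(\R^d)/\Spc S_\mathrm{Liz}(\R^d)$.

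The next step is to identify this quotient explicitly with the space $\C^{\N^d}$ of all complex multi-sequences equipped with the product topology. The moment map $\Op M(\varphi)_{\V k} = \int_{\R^d} \V x^{\V k}\,\varphi(\V x)\,\dint\V x$ sends $\Spc S(\R^d)$ into $\C^{\N^d}$ with kernel precisely $\Spc S_\mathrm{Liz}(\R^d)$; each coordinate is a continuous linear functional, and hence $\Op M$ is continuous into the product topology. Surjectivity follows from Borel's theorem: in the Fourier domain, every prescribed sequence of derivatives at the origin is realized by some $\Spc D(\R^d)$-function, whose inverse Fourier transform is a Schwartz function with the prescribed moments. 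Since both $\Spc S(\R^d)$ and $\C^{\N^d}$ are Fr\'echet spaces, the open mapping theorem upgrades $\Op M$ to a topological isomorphism $\Spc S(\R^d)/\Spc S_\mathrm{Liz}(\R^d) \cong \C^{\N^d}$, whence $\Spc Y$ is topologically isomorphic to $\C^{\N^d}$.

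The contradiction now follows from a classical dichotomy concerning continuous norms. On the one hand, $\Spc S(\R^d)$ carries a continuous norm --- for instance the $L_2$-norm --- which restricts to a continuous norm on the closed subspace $\Spc Y$. On the other hand, $\C^{\N^d}$ with the product topology admits no continuous norm at all: every continuous seminorm on $\C^{\N^d}$ is majorized by a quantity of the form $c \mapsto \max_{\V k \in F} |c_{\V k}|$ for some finite $F \subset \N^d$, and therefore vanishes on the infinite-dimensional subspace $\{c : c_{\V k} = 0 \text{ for all } \V k \in F\}$. Transporting the $L_2$-norm from $\Spc Y$ across the isomorphism would produce a continuous norm on $\C^{\N^d}$, which is impossible.

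The point I expect to need the most care is the very first step, namely verifying that a strongly continuous projection on $\Spc S^\prime(\R^d)$ with range $\Spc P(\R^d)$ really produces a continuous projection on $\Spc S(\R^d)$ with kernel $\Spc S_\mathrm{Liz}(\R^d)$; this is where reflexivity and the Montel property of the Schwartz space enter crucially. Once the duality reduction is in hand and the moment map is shown to descend to a topological isomorphism onto $\C^{\N^d}$, the incompatibility of continuous norms between Schwartz space and the space of all sequences delivers the non-complementation at once.
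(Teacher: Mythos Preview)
Your argument is correct, but it follows a genuinely different route from the paper's proof.

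The paper works directly in $\Spc S'(\R^d)$: assuming a continuous projection $\Op P$ onto $\mathcal P(\R^d)$, it looks at the images $p_h=\Op P\{\widehat{\delta_h}\}$ of the exponentials for $\|h\|\le 1$. Since finite differences of $\delta_h$ approximate all $\partial^{\V k}\delta_0$, the closure of $\spann\{\widehat{\delta_h}\}_{\|h\|\le 1}$ contains $\mathcal P(\R^d)$; by continuity of $\Op P$ the same holds for $\spann\{p_h\}$. The contradiction is then obtained by showing that the $p_h$ have uniformly bounded degree: otherwise one extracts $h_n\to h$ with $\deg p_{h_n}\to\infty$ and, via Borel's theorem, builds a single $\varphi\in\Spc S(\R^d)$ on which $\widehat{p_{h_n}}(\varphi)\to\infty$, contradicting $p_{h_n}\to p_h$ in $\Spc S'(\R^d)$.

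Your proof instead dualizes first: a projection onto $\mathcal P(\R^d)$ in $\Spc S'(\R^d)$ transposes (using that $\Spc S$ is Fr\'echet--Montel, hence reflexive with barreled strong dual) to a continuous projection on $\Spc S(\R^d)$ with kernel $\Spc S_\mathrm{Liz}(\R^d)$, producing a closed complement $\Spc Y\cong \Spc S(\R^d)/\Spc S_\mathrm{Liz}(\R^d)$. You then identify this quotient with $\C^{\N^d}$ via the moment map and Borel's theorem plus the open mapping theorem, and conclude from the classical fact that $\C^{\N^d}$ carries no continuous norm, whereas any closed subspace of $\Spc S(\R^d)$ inherits one (e.g.\ the $L_2$-norm). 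This is the standard ``$\omega$ has no continuous norm'' obstruction and is entirely sound; the delicate point you flag---continuity of the transpose for the Fr\'echet topology---is indeed handled by the Montel/reflexivity properties of $\Spc S(\R^d)$.

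In terms of what each buys: the paper's argument is self-contained and concrete, staying on the distribution side throughout and yielding Corollary~\ref{thm:exist_proj} as a consequence; your argument is more structural, effectively proving the non-complementation of $\Spc S_\mathrm{Liz}(\R^d)$ in $\Spc S(\R^d)$ first and deducing the theorem from it, i.e.\ reversing the paper's logical order. Both invoke Borel's theorem, but for different purposes---the paper to build a single witnessing test function, you to establish surjectivity of the moment map.
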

\begin{proof}
	Assume there is a complement.
	In other words, assume that a continuous projector $ \Op P \colon \Spc S^\prime (\R^d) \to \mathcal P(\R^d) \subset \Spc S^\prime (\R^d)$ exists.
	We consider $\widehat{\delta}_{\V x_0} \in \Spc S^\prime (\R^d)$, ${\V x_0} \in \R^d$,
	and set $p_{\V x_0} \coloneqq \Op P \{\widehat{\delta}_{\V x_0}\}\in \mathcal P(\R^d)$.
	Using relations such as
	\[\lim_{t \to 0} \frac{\delta_{te_k} - \delta}{t} = - \nabla_{e_k} \delta\]{b}
	and similar ones for higher-order derivatives, we observe that \smash{$\mathcal P(\R^d) \subset \overline{\spann\{\widehat{\delta}_{\V x_0}\}_{\Vert {\V x_0} \Vert \leq 1}}$}, where all equalities are in the sense of distributions.
	Since $\Op P$ is a continuous projection onto $\mathcal P(\R^d)$, this implies that
	\[\mathcal P(\R^d) \subset \overline{\spann\{p_{\V x_0}\}_{\Vert {\V x_0} \Vert \leq 1}}.\]{c}
	In the sequel, we show that the polynomials $p_{\V x_0}$ with $\Vert {\V x_0} \Vert_2 \leq 1$ have a common maximum degree $m$, which results in the contradiction that
	$\mathcal P(\R^d) \subset \mathcal P_m(\R^d)$.

	If no common maximum exists, then there is a sequence $ \{{\V h}_n\}_{n \in \N} \in \R^d$ with  $\Vert {\V h}_n \Vert_2 \leq 1$ such that $\{p_{{\V h}_n}\}_{n \in \N} $ is a sequence of polynomials with unbounded degree.
	By passing to a subsequence, we can assume that ${\V h}_n \to {\V h}$ for some ${\V h} \in \R^d$ with $\Vert {\V h} \Vert \leq 1$.
	 Due to the continuity of $\Op P$ and $\mathcal F$, we also have that $\widehat{p}_{{\V h}_n} \to \widehat{p}_{\V h}$ in the sense of distributions.
	 Setting $p_{{\V h}_n} =  \sum_{j=0}^{m_n} a_{j,n} x^j$ with $m_n \to \infty$ and $a_{m_n,n} \neq 0$, this can be written as
	 \[\widehat{p}_{{\V h}_n} = \sum_{j = 0}^{m_n} (-2\pi \jj)^{j} a_{j,n} \frac{\partial^j}{\partial \xi^j } \delta_0 \to \widehat{p}_{\V h}.\]{d}
	 Dropping again to a subsequence, we assume that $m_n$ is monotonically increasing.
	 Using Borel's theorem, we then pick $\varphi \in \Spc S (\R^d)$  with $\frac{\partial^{m_n}}{\partial \xi^{m_n}} \varphi(0) = (a_{{m_n},n})^{-1} C_n$, where $C_n$ is chosen such that
	 \[\biggl \vert \sum_{j = 0}^{m_n} (-2\pi \jj)^{j} a_{j,n} \frac{\partial^j}{\partial \xi^j } \varphi(0) \biggr \vert \geq n.\]{e}
	 Hence, $\widehat{p}_{{\V h}_n}(\varphi) \to \infty$, which contradicts that $\widehat{p}_{{\V h}_n} \to \widehat{p}_{\V h} \in \Spc S^\prime (\R^d)$.
	 Consequently, all $p_{\V h}$ with $\Vert {\V h} \Vert \leq 1$ have a common maximum degree $m$.
\end{proof}
\begin{remark}
	Theorem~\ref{thm:exist_proj_dual} implies that there is no continuous linear projection $\Op P\colon \Spc S^\prime(\R^d) \to \Spc S^\prime(\R^d)$ with $\ker \Op P = \mathcal P(\R^d)$.
	Otherwise, we would have that $(\mathrm{Id} - \Op P)$ is a continuous projector onto $\mathcal P(\R^d)$.
	In particular, representatives of Lizorkin distributions cannot be assigned in a continuous linear way.
\end{remark}
Now, the desired nonexistence result follows immediately.
\begin{corollary}\label{thm:exist_proj}
	There exists no topological complement of $\Spc S_\mathrm{Liz} (\R^d)$ in $\Spc S (\R^d)$.
\end{corollary}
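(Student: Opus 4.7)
The plan is to dualize and reduce directly to Theorem~\ref{thm:exist_proj_dual}. Suppose for contradiction that a continuous projection $\Op P_\mathrm{Liz}\colon \Spc S(\R^d) \to \Spc S_\mathrm{Liz}(\R^d)$ exists. Since both $\Spc S(\R^d)$ and $\Spc S^\prime(\R^d)$ are reflexive nuclear Montel spaces (as recalled in the preliminaries), the adjoint $\Op P_\mathrm{Liz}^\ast\colon \Spc S^\prime(\R^d) \to \Spc S^\prime(\R^d)$ is well-defined and continuous in the strong (equivalently, weak*) dual topology. From $\Op P_\mathrm{Liz}^2 = \Op P_\mathrm{Liz}$, one gets $(\Op P_\mathrm{Liz}^\ast)^2 = \Op P_\mathrm{Liz}^\ast$, so that $\Op P_\mathrm{Liz}^\ast$ is itself a continuous projector on $\Spc S^\prime(\R^d)$.

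Next, I would identify $\ker \Op P_\mathrm{Liz}^\ast$ with the annihilator of $\Spc S_\mathrm{Liz}(\R^d)$ inside $\Spc S^\prime(\R^d)$. Indeed, $f \in \ker \Op P_\mathrm{Liz}^\ast$ if and only if $\langle f, \Op P_\mathrm{Liz}\varphi \rangle = 0$ for every $\varphi \in \Spc S(\R^d)$, which is equivalent to $f$ vanishing on the range $\Spc S_\mathrm{Liz}(\R^d)$ of the projector. The identification $\Spc S^\prime_\mathrm{Liz}(\R^d) \cong \Spc S^\prime(\R^d)/\Spc P(\R^d)$ recalled in the introduction then gives that this annihilator is exactly $\Spc P(\R^d)$. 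If desired, this can be checked directly by passing to the Fourier side: a distribution that kills every $\varphi \in \Spc S_\mathrm{Liz}(\R^d)$ has a Fourier transform that vanishes on $\widehat{\Spc S}_\mathrm{Liz}(\R^d)$, hence is supported at the origin, hence a finite combination of derivatives of $\delta_0$, whose inverse Fourier transform is a polynomial.

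With $\ker \Op P_\mathrm{Liz}^\ast = \Spc P(\R^d)$ in hand, the contradiction is immediate: $\mathrm{Id} - \Op P_\mathrm{Liz}^\ast$ is a continuous projection of $\Spc S^\prime(\R^d)$ onto $\Spc P(\R^d)$, which is exactly what the remark following Theorem~\ref{thm:exist_proj_dual} forbids. The main obstacle, such as it is, lies in the annihilator computation and in justifying the continuity of the adjoint; both are standard once reflexivity of $\Spc S(\R^d)$ is invoked, and the latter is already made explicit in the preliminaries. Everything else amounts to routine manipulations of adjoint projections on reflexive locally convex spaces.
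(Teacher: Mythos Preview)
Your proposal is correct and follows essentially the same route as the paper: assume a continuous projector $\Op P_\mathrm{Liz}$ exists, pass to the adjoint $\Op P_\mathrm{Liz}^\ast$ on $\Spc S'(\R^d)$, identify $\ker \Op P_\mathrm{Liz}^\ast$ with the annihilator $\Spc P(\R^d)$ of $\Spc S_\mathrm{Liz}(\R^d)$, and conclude that $\mathrm{Id}-\Op P_\mathrm{Liz}^\ast$ would project onto $\Spc P(\R^d)$, contradicting Theorem~\ref{thm:exist_proj_dual}. You supply a bit more justification for the continuity of the adjoint and for the annihilator computation than the paper does, but the argument is the same.
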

\begin{proof}
On the contrary, let us assume that a continuous linear projection $\Op P_\mathrm{Liz}  \colon \Spc S(\R^d) \to \Spc S_\mathrm{Liz}(\R^d)$ exists.
	Then, the adjoint map $\Op P_\mathrm{Liz}^*  \colon \Spc S^\prime(\R^d) \to \Spc S^\prime(\R^d)$ is a projection as well.
	Due to the fact that
	\begin{equation}
		\langle \Op P_\mathrm{Liz}^* \{f\}, \varphi \rangle = \langle f, \Op P_\mathrm{Liz}\{\varphi\} \rangle
	\end{equation}
	for all $f \in \Spc S^\prime(\R^d) $ and $\varphi \in \Spc S(\R^d)$, its null space is given by $\ker \Op P_\mathrm{Liz}^* = \mathcal P(\R^d)$.
	Hence, $(\mathrm{Id} -\Op P_\mathrm{Liz}^*)$ would be a projection onto $\mathcal P(\R^d)$, which contradicts Theorem~\ref{thm:exist_proj_dual}.
\end{proof}

\subsection{Closure of the Lizorkin Space}\label{sec:CloseLiz}
Despite the negative finding of Section~\ref{sec:NonExistence}, we are nevertheless able to provide a result that is useful for applications.
\begin{theorem}\label{lem:DensLiz}
	It holds that $\overline{\bigl(\Spc S_\mathrm{Liz}(\R^d), \Vert \cdot \Vert_\infty\bigr)} = C_0(\R^d)$.
\end{theorem}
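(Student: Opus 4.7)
The plan is to combine two standard facts: first, that $\Spc S(\R^d)$ is already dense in $C_0(\R^d)$ with respect to $\|\cdot\|_{L_\infty}$ (essentially by definition, as noted just after the introduction of $C_0(\R^d)$ in Section~\ref{sec:prelim}); and second, that any Schwartz function can be $L_\infty$-approximated by Lizorkin functions via a Fourier-side cutoff near the origin. The inclusion $\overline{\Spc S_\mathrm{Liz}(\R^d)}^{\|\cdot\|_\infty}\subset C_0(\R^d)$ is automatic since $\Spc S_\mathrm{Liz}(\R^d)\subset\Spc S(\R^d)\subset C_0(\R^d)$ and $C_0(\R^d)$ is $L_\infty$-closed. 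So the real content is the reverse inclusion, which by a two-step approximation reduces to: for every $g \in \Spc S(\R^d)$ and every $\epsilon>0$, produce $\varphi\in\Spc S_\mathrm{Liz}(\R^d)$ with $\|g-\varphi\|_{L_\infty}<\epsilon$.

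To build such a $\varphi$, I would work on the Fourier side using the characterization \eq{a}: a Schwartz function lies in $\Spc S_\mathrm{Liz}(\R^d)$ if and only if its Fourier transform vanishes to infinite order at the origin. Fix a smooth $\eta \colon \R^d \to [0,1]$ with $\eta \equiv 0$ on $\{\|\bw\|\le 1\}$ and $\eta \equiv 1$ on $\{\|\bw\|\ge 2\}$, and set $\eta_n(\bw)=\eta(n\bw)$. Define
\begin{equation}
\varphi_n \coloneqq \Fourier^{-1}\bigl(\eta_n\, \widehat g\bigr).
\end{equation}
Since $\eta_n \widehat g$ is Schwartz (product of a bounded smooth function with bounded derivatives and a Schwartz function) and vanishes identically on $\{\|\bw\|\le 1/n\}$, all its derivatives at the origin are $0$; by \eq{a}, $\varphi_n \in \Spc S_\mathrm{Liz}(\R^d)$.

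The convergence $\|\varphi_n - g\|_{L_\infty}\to 0$ follows from the elementary estimate $\|\Fourier^{-1}(h)\|_{L_\infty}\le C\|h\|_{L_1}$ applied to $h_n \coloneqq (\eta_n -1)\widehat g$, since
\begin{equation}
\|h_n\|_{L_1} \le \|\widehat g\|_{L_\infty}\, \mathrm{vol}\bigl(\{\|\bw\|\le 2/n\}\bigr) \longrightarrow 0,
\end{equation}
using that $\eta_n - 1$ is supported in $\{\|\bw\|\le 2/n\}$ and bounded by $1$. This proves the approximation.

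I do not anticipate a genuine obstacle: the key observation, once the Fourier characterization \eq{a} is in hand, is simply that excising a small neighborhood of the origin in Fourier space yields a Lizorkin function, and that this excision costs only the $L_1$-mass over a shrinking ball. The only mildly delicate points are keeping the cutoff smooth (so that $\eta_n\widehat g$ stays in $\Spc S(\R^d)$) and using the $L_1 \to L_\infty$ bound rather than $L_2$-type estimates, because the target norm is $L_\infty$.
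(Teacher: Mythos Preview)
Your argument is correct and considerably more direct than the paper's. The paper proceeds by building an auxiliary family $\phi_{\V n}=\tfrac{\V x^{\V n}}{\V n!}\varphi_0$ with $\varphi_0$ a fixed bump, showing via a dilation trick that each $\widehat\phi_{\V n}$ lies in the $L_\infty$-closure of $\Spc S_\mathrm{Liz}(\R^d)$, and then arguing that for any \emph{entire} Schwartz function $f$ one can write $f=(f-f\varphi_0)+f\varphi_0$ with the first piece in $\widehat{\Spc S}_\mathrm{Liz}(\R^d)$ and the second in $E_2=\overline{\spann\{\phi_{\V n}\}}$ (the latter requiring a Schwartz-topology convergence estimate for the truncated Taylor series). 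Density of entire Schwartz functions then closes the argument. Your route bypasses all of this by the single observation that excising a shrinking ball at the frequency origin produces Lizorkin functions at an $L_\infty$-cost controlled by $\|(1-\eta_n)\widehat g\|_{L_1}\to 0$; this is both shorter and avoids the entire-function detour. The paper's construction, on the other hand, yields slightly more structural information (an explicit decomposition $\widehat{\Spc S}_\mathrm{Liz}+E_2$ dense in $\Spc S$ in the Schwartz topology), which is not needed here but could be of independent interest.
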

We note that the result was already mentioned in \cite{Samko1995}, but without a proof.
\begin{proof}
	Using the function $\tilde \varphi_0\colon \R^d \to \R$ with $\tilde \varphi_0(\V x) = \exp(-1/(1-(2\Vert \V x \Vert)^2))/n_d$ ($n_d$ is the normalizing constant) for $\Vert \V x \Vert < 1/2$ and zero else, we define $\varphi_0 \colon \R^d \to [0,1]$  via $\varphi(\V x) = (\chi_{B_{1}(0)} \ast \tilde \varphi_0 ) (2\V x)$ with $\chi_{B_{1}(0)}$ being the characteristic function of the unit ball.
	This  function is smooth, symmetric, and satisfies  that $\varphi(\V x) = 1$ for $\vert \V x \vert \leq 1/4$ and $\varphi(\V x) = 0$ for $\vert \V x \vert \geq 3/4$.
	Based on this function, we define
	\[\phi_{\V n} = \frac{\V x^{\V n}}{\V n!}\varphi_0 \in \Spc S(\R^d),\quad \V n \in \N^d\]{f}
	and set
	\[E_2  = \overline{\spann\{\phi_{\V n}: \V n \in \N^d\}} \subset \Spc S(\R^d).\]{g}
	Next, we observe that  $(\widehat \phi_{\V n}  - t^{-\vert \V n \vert -d} \widehat \phi_{\V n} (\cdot/t)) \to \widehat \phi_{\V n}  \in C_0(\R^d)$ as $ t \to \infty$, where the sequences have all moments equal to zero.
	Hence, we have that
	\begin{equation}
		\widehat {E}_2 \in \overline{\bigl(\Spc S_\mathrm{Liz}(\R^d), \Vert \cdot \Vert_\infty\bigr)}.
	\end{equation}
	
	To conclude the argument, we show  that $\widehat{\Spc S}_\mathrm{Liz}(\R^d) + E_2$ contains the entire Schwartz functions, so that its closure under the Schwartz topology is already the complete space $\Spc S(\R^d)$.
	Then, the same holds true for $\Spc S_\mathrm{Liz}(\R^d) + \widehat {E}_2$ and, consequently, we get that
	\begin{equation}
		\overline{\bigl(\Spc S_\mathrm{Liz}(\R^d), \Vert \cdot \Vert_\infty\bigr)} = \overline{\bigl(\Spc S_\mathrm{Liz}(\R^d) + \widehat{E}_2, \Vert \cdot \Vert_\infty\bigr)} = \overline{\bigl(\Spc S (\R^d), \Vert \cdot \Vert_\infty\bigr)} = C_0(\R^d).
	\end{equation}
	The Taylor series of any entire function $f$ converges absolutely for any $\V x \in \R^d$.
	It then holds that
	\[g = f\varphi_0= \sum_{\V n \in \N^d} \partial^{\V n} f(\V 0) \phi_{\V n} \in \Spc S(\R^d).\]{h}
	Hence, we get that $(f - g) \in \widehat{\Spc S}_\mathrm{Liz}(\R^d)$. It remains to show that $g \in E_2$ or, equivalently, that
	\[g_j = \sum_{\V n \in \N^d, \vert \V n \vert < j} \partial^{\V n} f(\V 0) \phi_{\V n}\to g\]{i}
	in the Schwartz topology.
	For any ${\V \alpha}, {\V k}  \in \N^d$, it holds that
	\begin{align}
		\Vert \V  x^{\V \alpha} \partial^{\V k}  (g-g_j)\Vert_\infty &\leq \sup_{\vert \V x \vert \leq 3/4} \vert \V x^{\V \alpha} \vert \sum_{\V n \in \N^d, \vert \V n \vert \geq j} \Bigl \vert \frac{\partial^{\V n} f(\V 0)}{\V n!} \partial^{\V k}  \bigl(\V x^{\V n} \varphi_0(\V x)\bigr)\Bigr \vert\notag\\
		& \leq \sup_{\vert \V x \vert \leq 3/4} \sum_{\V n \in \N^d, \vert \V n \vert \geq j} \vert \partial^{\V n} f(\V 0)\vert \sum_{\substack{{\V k} _1 \leq {\V k} ,\V n}} {{\V k }\choose{{\V k} _1}}\frac{\vert \V x ^{\V n- {\V k} _1}\vert}{(\V n-{\V k} _1)!} \vert \partial^{{\V k} -{\V k} _1} \varphi_0(\V x)\vert \notag\\
		&\leq C  \sum_{\V n \in \N^d, \vert \V n \vert \geq j} \vert \partial^{\V n} f(\V 0)\vert \sum_{\substack{{\V k} _1 \leq {\V k} , \V n}} {{{\V k} }\choose{{\V k} _1}}\frac{1}{(\V n-{\V k} _1)!}\notag\\
		&\leq C \sum_{\V n \in \N^d, \vert \V n \vert \geq j} \vert \partial^{\V n} f(\V 0)\vert \frac{\V n^{\V k} }{ \V n!}.
	\end{align}
	The last expression converges to zero as $j \to \infty$ if
	\[1<\limsup_{j \to \infty} \biggl(\sum_{\vert \V n\vert=j} \frac{\vert \partial^{\V n} f(\V 0)\vert}{\V n!}j^{\vert {\V k}  \vert} \biggr)^{-\frac{1}{j}}.\]{j}
	However, it holds that $\limsup_{j \to \infty} j^\frac{1}{j} = 1$ and, hence, the claim follows since the Taylor expansion converges absolutely for any $\V x \in \R^d$.
\end{proof}
By duality, Theorem~\ref{lem:DensLiz} implies that the Radon measures $\mathcal M(\R^d)$ are continuously embedded into the space $\Spc S^\prime_\mathrm{Liz}(\R^d)$ of Lizorkin distributions.

\begin{remark}
	In \cite{Samko1995}, it was shown that the same results hold for $L_p(\R^d)$ with $1 \leq p < \infty$.
\end{remark}

\section{Banach Subspaces of Lizorkin Distributions}\label{sec:LizorkinRep}
In contrast to the periodic case, the space $\Spc S^\prime_\mathrm{Liz}(\R^d)$  is an abstract space of equivalence classes, which means that the assignment of representatives for computational purposes is difficult.
Therefore, we want to restrict our attention to subspaces with more structure.
As our proposed framework is also applicable for other spaces, we outline it in full generality and explicitly provide the specifications for $ \Spc S_\mathrm{Liz}(\R^d)$  as discussion.

Let $\Spc S_1$, $\Spc S_2$ be two topological vector spaces with a linear homeomorphism $\Op T \colon \Spc S_1 \to \Spc S_2$ and let $\Op T^* \colon \Spc S_2^\prime \to \Spc S_1^\prime$ be defined via duality.
The simplest choice for the construction of Banach subspaces of $\Spc S^\prime_\mathrm{Liz}(\R^d)$ is $\Spc S_1 = \Spc S_2 = \Spc S_\mathrm{Liz}(\R^d)$, but variations of this setting are clearly possible.
In the Lizorkin setting, a possible choice of operator is the fractional Laplacian $\Op T = (-\Delta)^\alpha$, which is discussed as first example in Section~\ref{sec:Liz_Frac}.
Now, let $(\Spc X, \Spc X^\prime)$ be a dual pair of Banach spaces whose norm $\Vert \cdot \Vert_{\Spc X}$ is continuous w.r.t.\ the topology of $\Spc S_1$ such that \smash{$\overline{\bigl(\Spc S_1, \Vert \cdot \Vert_{\Spc X }\bigr)} = \Spc X$}.
For $\Spc S_1 = \Spc S_\mathrm{Liz}(\R^d)$, we have seen in Section~\ref{sec:CloseLiz} that $\Spc X = L_p(\R^d)$, $p \in [1,\infty)$, and $\Spc X = C_0(\R^d)$ are admissible choices, as their norms are indeed compatible with the Schwartz topology.
The density enables us to write that
\begin{equation}\label{eq:TV_T}
	\Vert f \Vert_{\Spc X^\prime} = \sup_{\varphi \in \Spc X: \|\varphi\|_{\Spc X}\le 1} \langle f, \varphi \rangle = \sup_{\varphi \in \Spc S_1: \|\varphi\|_{\Spc X}\le 1} \langle f, \varphi \rangle
\end{equation}
for any $f \in \Spc X^\prime$.
Given any  $f \in \Spc S_1^\prime$ for which \eqref{eq:TV_T} is finite, the bounded linear transformation (BLT) theorem implies that there exists a unique continuous extension to some element in $\Spc X^\prime$.
Conversely, any $f \in \Spc X^\prime$ defines a unique element in $ \Spc S_1^\prime$  via restriction due to the compatibility of the norm and the topology.

In this setting, we define the abstract space
\begin{align}
	\Spc X^\prime_{\Op T}=&\bigl\{f \in \Spc S'_2: \|\Op T^* \{f\}\|_{\Spc X^\prime}<\infty\bigr\}=\bigl\{\Op T^{-*} \{g\} \in \Spc S'_2:g\in \Spc X^\prime \bigr\},\label{eq:RepMeasGen} 
\end{align}
which is a Banach space if equipped with the norm \smash{$\Vert \cdot \Vert_{\Spc X_{\Op T}^\prime} \coloneqq \|\Op T^*\{\cdot\}\|_{\Spc X^\prime}$}.
In particular, by choosing $\Spc S_2 = \Spc S_\mathrm{Liz}(\R^d)$, we can construct a subspace of $ \Spc S^\prime_\mathrm{Liz}(\R^d)$ and equip it with a Banach-space structure.
The norm of \smash{$\Spc X^\prime_{\Op T}$} can be rewritten in dual form as
\begin{align}
	\| f \|_{\Spc X_{\Op T}^\prime} &= \sup_{\varphi \in \Spc S_1: \|\varphi\|_{\Spc X}\le 1} \langle \Op T^*\{f\}, \varphi \rangle = \sup_{\varphi \in \Spc S_2: \|\Op T^{-1}\{\varphi\}\|_{\Spc X}\le 1} \langle \Op T^*\{f\}, \Op T^{-1}\{\varphi\} \rangle\notag\\
	& = \sup_{\varphi \in \Spc S_2: \|\Op T^{-1}\{\varphi\}\|_{\Spc X}\le 1} \langle f, \Op T \Op T^{-1}\{\varphi\} \rangle = \sup_{\varphi \in \Spc S_2: \|\Op T^{-1}\{\varphi\}\|_{\Spc X}\le 1} \langle f, \varphi \rangle.
\end{align}
Consequently, from the BLT theorem again, any $f \in \Spc X^\prime_{\Op T}$ can be extended to a continuous functional with domain
\begin{equation}
	\Spc X_{\Op T} =  \overline{(\Spc S_2,\|\Op T^{-1}\{\cdot\}\|_{\Spc X})},
\end{equation}
which is identified as a predual of $\Spc X^\prime_{\Op T}$ since $\Op T^{-1}$ is continuous.
Likewise the operator $\Op T^{-1}$ can be extended to a continuous and surjective operator  $\overline{\Op T^{-1}} \colon \Spc X_{\Op T} \to \Spc X$.
Now, it holds that $\langle f, \varphi \rangle = \langle g, \overline{\Op T^{-1}} \{\varphi\} \rangle$ for any $f = \Op T^{-*} \{g\}$ and $\varphi \in \Spc X_{\Op T}$.
Hence, the weak* convergence of a sequence $f_n = \Op T^{-*} \{g_n\}$  to $f = \Op T^{-*} \{g\}$ is equivalent to the weak* convergence of $g_n$ to $g$.

Let us now discuss in more detail the case $\Spc S_1 = \Spc S_2 = \Spc S_\mathrm{Liz}(\R^d)$.
To simplify the notation, we stick to $\Spc X = C_0(\R^d)$, but the same argumentation applies to $L_p(\R^d)$.
For the remainder of this section, we use the more specific notations
\begin{align}
	\Spc  S'_{\mathrm{Liz},\Op T}(\R^d)=\bigl\{\Op T^{-*}  \{\mu\} \in \Spc S'_\mathrm{Liz}(\R^d) :\mu\in \Spc M(\R^d) \bigr\}
\end{align}
and $\Spc  S_{\mathrm{Liz},\Op T}(\R^d)$ for the dual and for the predual, respectively.
At first glance, the search for representatives for \smash{$\Spc  S'_{\mathrm{Liz},\Op T}(\R^d)$} is as difficult as before because we are still dealing with elements in $\Spc S'_\mathrm{Liz}(\R^d)$.
To resolve this issue, let us assume that there exist continuous elements $\rho_{\Op T,\V y} = \Op T^{-*}  \{ \delta(\cdot-\V y)\} \in C(\R^d)$ in each equivalence class.
Then, we conclude, for any $f = \Op T^{-*}  \{\mu\}  \in \Spc  S'_{\mathrm{Liz},\Op T}(\R^d)$ and $\varphi \in \Spc S_\mathrm{Liz}(\R^d)$, that
\begin{align}
	\bigl \langle {\Op T}^{-*} \{\mu\} , \varphi \bigr \rangle =& \bigl \langle \mu, \Op T^{-1} \{\varphi\}  \bigr \rangle \notag= \int_{\R^d} \Op T^{-1} \{\varphi\} (\V y) \dint \mu(\V y) = \int_{\R^d} \bigl \langle \Op T^{-*}  \{ \delta(\cdot-\V y)\}, \varphi \bigr \rangle \dint \mu(\V y)\\
	=& \int_{\R^d} \int_{\R^d} \bigl(\rho_{\Op T,\V y}(\V x) - p_{\V y}(\V x) \bigr) \varphi(\V x) \dint \V x \dint \mu(\V y),
\end{align}
where the polynomials $p_{\V y}(\V x) \in \Spc P(\R^d)$ are added to ensure the following properties:
First, the kernel $h(\V x, \V y) = \rho_{\Op T,\V y}(\V x)  - p_{\V y}(\V x)$ must be bi-continuous and bounded for some $g \in L_{\infty,\alpha}(\R^d)$, $\alpha \geq 0$, and every $\V y \in \R^d$ by
\begin{align}
	\vert h(\V x, \V y)\vert &\leq g(\Vert \V x \Vert).
\end{align}
Second, we require that $h(\V x, \V y) \to 0$ for a given $\V x \in \R^d$ and $\Vert \V y \Vert \to \infty$.
Using Fubini's theorem and the growth control, we can then identify the distribution $\Op T^{-*}  \{\mu\}  \in \Spc S'_\mathrm{Liz}(\R^d)$ as the continuous function $f(\V x) = \int_{\R^d}  \rho_{\Op T,\V y}(\V x) - p_{\V y}(\V x) \dint \mu(\V y)$.
Due to the growth bound, this function corresponds to a unique distribution in $\Spc S'(\R^d)$.
We collect these observations together with a few properties in Theorem~\ref{thm:RepLiz}.
\begin{theorem}\label{thm:RepLiz}
	Assume that the Schwartz kernel $h(\V x, \V y) = \rho_{\Op T,\V y}(\V x)  - p_{\V y}(\V x)$ is bi-continuous and  bounded for every $\V y \in \R^d$ as
	\begin{align}
		\vert h(\V x, \V y)\vert &\leq g(\Vert \V x \Vert)
	\end{align}
for some $g \in L_{\infty,\alpha}(\R^d)$, $\alpha \geq 0$.
	Then, any element  $f = \Op T^{-*}  \{\mu\} \in\Spc S^\prime_{\mathrm{Liz},\Op T}(\R^d)$ can be identified as the continuous function
	\begin{equation}
		f(\V x) = \int_{\R^d}  \rho_{\Op T,\V y}(\V x) - p_{\V y}(\V x) \dint \mu(\V y)
	\end{equation}
	with bounded growth so that
	$\vert f(\V x) \vert \leq \vert \mu\vert (\R^d) g(\Vert \V x \Vert)$.
	In particular, we get that the operator $\Op P_{\mathrm{Liz},\Op T} \colon \Spc S^\prime_{\mathrm{Liz},\Op T}(\R^d) \to L_{\infty,\alpha}(\R^d) \hookrightarrow \Spc S^\prime (\R^d)$ with $\Op T^{-*}  \{\mu\} \mapsto f$ assigning the representatives is linear and continuous.
	Moreover, if $h(\V x, \cdot) \in C_0(\R^d)$ for every $\V x \in \R^d$, then the point evaluations for the representatives are in the predual $\Spc S_{\mathrm{Liz},\Op T}(\R^d)$.
\end{theorem}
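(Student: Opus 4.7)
The plan is to convert the formal chain of equalities displayed just before the statement into a rigorous identification, then exploit the growth bound to control everything in a single hit. First, I would check that $f(\V x) = \int_{\R^d} h(\V x, \V y) \dint \mu(\V y)$ is well defined pointwise and continuous on $\R^d$: the domination $|h(\V x, \V y)| \leq g(\|\V x\|)$ makes the integrand $|\mu|$-integrable with a bound locally uniform in $\V x$, while bi-continuity of $h$ combined with the finiteness of $|\mu|$ lets one apply dominated convergence to get continuity. The growth bound $|f(\V x)| \leq |\mu|(\R^d)\, g(\|\V x\|)$ is then immediate, which places $f$ in $L_{\infty,\alpha}(\R^d)$; and via the standard pairing of polynomially bounded measurable functions with Schwartz functions, $f$ also embeds continuously into $\Spc S'(\R^d)$.

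Next, I would verify that this $f$ truly represents the equivalence class $\Op T^{-*}\{\mu\}$. For any $\varphi \in \Spc S_\mathrm{Liz}(\R^d)$, the polynomial correction $p_{\V y}$ is killed by the integration against $\varphi$ (since $\varphi$ has all vanishing moments), so
\begin{align}
\Op T^{-1}\{\varphi\}(\V y) = \langle \Op T^{-*}\{\delta(\cdot-\V y)\},\varphi\rangle = \int_{\R^d} h(\V x,\V y)\,\varphi(\V x)\dint \V x.
\end{align}
Inserting this into $\langle \Op T^{-*}\{\mu\},\varphi\rangle = \int \Op T^{-1}\{\varphi\}(\V y)\dint\mu(\V y)$ yields a double integral of $h(\V x,\V y)\varphi(\V x)$, dominated by $g(\|\V x\|)|\varphi(\V x)|$; since this majorant is in $L_1(\dint \V x)$ (rapid decay of $\varphi$ against polynomial growth of $g$) and trivially in $L_1(\dint|\mu|)$ uniformly, Fubini applies and the iterated integral equals $\int f(\V x)\varphi(\V x)\dint \V x$. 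This confirms that $f$ agrees with $\Op T^{-*}\{\mu\}$ when tested on $\Spc S_\mathrm{Liz}(\R^d)$, hence is a legitimate representative.

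Linearity of $\Op P_{\mathrm{Liz},\Op T}$ is built into the construction; its continuity follows from
\begin{align}
\|f\|_{\infty,\alpha} \leq \|g\|_{\infty,\alpha}\,|\mu|(\R^d) = \|g\|_{\infty,\alpha}\,\|\Op T^{-*}\{\mu\}\|_{\Spc X_{\Op T}'},
\end{align}
using that the $\Spc X_{\Op T}'$-norm of $\Op T^{-*}\{\mu\}$ is exactly $\|\mu\|_{\Spc M} = |\mu|(\R^d)$. For the final claim, note that point evaluation at $\V x \in \R^d$ of the representative is the functional $\mu \mapsto \int h(\V x,\V y)\dint\mu(\V y)$ on $\Spc M(\R^d)$. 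Under the hypothesis $h(\V x,\cdot) \in C_0(\R^d) = \Spc X$, this is precisely the action of the element $h(\V x,\cdot)$ in the $(\Spc X,\Spc X')$-duality, and hence weak$^\ast$-continuous. Using the continuous surjection $\overline{\Op T^{-1}}\colon \Spc X_{\Op T} \to \Spc X$ from the preceding discussion, one picks $\psi \in \Spc X_{\Op T} = \Spc S_{\mathrm{Liz},\Op T}(\R^d)$ with $\overline{\Op T^{-1}}\{\psi\} = h(\V x,\cdot)$; then $\langle \Op T^{-*}\{\mu\}, \psi\rangle = \langle \mu, h(\V x,\cdot)\rangle = f(\V x)$, so $\psi$ realizes the point evaluation inside the predual.

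The main obstacle, I expect, is the careful bookkeeping between the equivalence class $\Op T^{-*}\{\mu\} \in \Spc S^\prime_\mathrm{Liz}(\R^d)$ and the pointwise function $f \in L_{\infty,\alpha}(\R^d) \hookrightarrow \Spc S'(\R^d)$: the polynomials $p_{\V y}$ have no effect on the Lizorkin pairing (since Lizorkin test functions annihilate $\Spc P(\R^d)$) but are indispensable for $f$ to have the claimed growth control, and this interplay must be tracked carefully when invoking Fubini and when pulling back the weak$^\ast$-continuity of point evaluation through $\overline{\Op T^{-1}}$.
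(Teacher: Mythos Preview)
Your proposal is correct and follows essentially the same route as the paper: the identification of $f$ as a representative via Fubini and the growth bound are precisely the content of the discussion preceding the theorem, and the paper's proof simply refers back to that.

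There is one minor difference worth flagging. For the final claim that point evaluations lie in the predual, you give an explicit construction: use the isometric surjection $\overline{\Op T^{-1}}\colon \Spc S_{\mathrm{Liz},\Op T}(\R^d) \to C_0(\R^d)$ to pull back $h(\V x,\cdot)$ to an element $\psi$ of the predual. The paper instead argues abstractly: it checks that the evaluation functional is weak*-continuous on $\Spc S^\prime_{\mathrm{Liz},\Op T}(\R^d)$ (since weak* convergence of $\Op T^{-*}\{\mu_n\}$ is equivalent to weak* convergence of $\mu_n$ in $\Spc M(\R^d)$, and $h(\V x,\cdot)\in C_0(\R^d)$), and then invokes the general fact that the weak*-continuous linear functionals on a dual Banach space are exactly the elements of its predual. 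Both arguments are short and equivalent; yours has the modest advantage of actually exhibiting the predual element, while the paper's abstract version makes clear that no special structure beyond weak*-continuity is being used.
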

\begin{proof}
	We have already shown that $f$ is indeed a representative. 
	The growth bound and the continuity of $\Op P_{\mathrm{Liz},\Op T}$ follow immediately from the bound on $h(\V x, \V y)$.
	
	Next, we show that point evaluations are weak*-continuous.
	Let $f_n = \Op T^{-*} \{\mu_n\}$ be a weak*-convergent sequence with limit  $ f = \Op T^{-*} \{\mu\}$, in the sense that $\mu_n$ converges weakly to $\mu$ as a measure.
	Due to the requirement that $e(\V x, \cdot) \in C_0(\R^d)$, we directly get that the evaluation functionals are weak*-continuous.
	To conclude the argument, we recall that the only weak*-continuous linear functionals on $\Spc S^\prime_{\mathrm{Liz},\Op T}(\R^d)$ are the elements of $\Spc S_{\mathrm{Liz},\Op T}(\R^d)$, see \cite[Thm.\ IV.20]{ReedSimon1980}.
\end{proof}
While this construction does not cover all Lizorkin distributions, we discuss two interesting examples, which will then be used in Section~\ref{sec:VarFramework} to revisit representer theorems for certain problems.
\begin{remark}\label{rem:RadialLiz}
	The same argumentations and constructions can be applied if $\Spc S_1$ consists of even or odd (hyper-spherical) Lizorkin functions.
	This setting is actually required for one of our examples.
\end{remark}

\subsection{Example 1: Fractional Laplacians}\label{sec:Liz_Frac}
Here, we choose the spaces as $\Spc S_1= \Spc S_2=\Spc S_\mathrm{Liz}(\R^d)$ and $\Spc X = C_0(\R^d)$ with $\Op T$ being the fractional Laplacian described in Section~\ref{Sec:Notations}, which is self-adjoint.
Specifically,
$(-\Delta)^{\alpha}\colon \Spc S_\mathrm{Liz}(\R^d) \to \Spc S_\mathrm{Liz}(\R^d)$ for any $\alpha \in \R$ with $(-\Delta)^{-\alpha}(-\Delta)^{\alpha}=\Identity$ on $\Spc S_\mathrm{Liz}(\R^d)$.
First, we note that the required density result was already established in Theorem~\ref{lem:DensLiz}.
According to these choices, $\Spc X^\prime_{\Op T}$ is given by  
\begin{align}
	\Spc M^{\alpha}(\R^d)= \bigl\{(-\Delta)^{-\alpha/2} \mu \in \Spc S'_\mathrm{Liz}(\R^d):\mu \in \Spc M(\R^d)\bigr\},\label{eq:RepMeas}
\end{align}
with predual space $C^\alpha(\R^d) = \overline{(\Spc S_{\mathrm{Liz}}(\R^d),\|(-\Delta)^{-\alpha/2}\cdot\|_{L_\infty})}$.

By Theorem~\ref{thm:RepLiz}, we can get a representation operator for $\alpha>d$ and $(\alpha -d)\notin \N$.
Indeed, let $\rho_{\mathrm{Liz},\alpha} = (-\Delta)^{-\alpha/2}\{\delta\}\in \Spc M^{\alpha}(\R^d)$ with a continuous representation given by \eqref{Eq:GreenLap}.
Using this representation, we obtain that $\rho_{\mathrm{Liz},\alpha} (\cdot- \V x_k) = (-\Delta)^{-\alpha/2}\{\delta(\cdot- \V x_k)\}$.
Now, we have to show that there exist polynomials $p_{\V y}(\V x) \in \Spc P(\R^d)$ such that the kernel $h(\V x, \V y) = \rho_{\mathrm{Liz},\alpha}(\V x - \V y)  - p_{\V y}(\V x)$  fulfills the requirements.
Based on \eqref{Eq:GreenLap}, we construct  for $\V y \neq \mathbf 0$ the polynomial $\tilde p_{\V y}(\V x)= T_{\lceil \alpha -d -1 \rceil} \{\rho_{\mathrm{Liz},\alpha}(\cdot - \V y)\}(\V x)$ with $T_{\lceil \alpha -d -1 \rceil}$ the Maclaurin expansion of order $\lceil \alpha -d -1 \rceil$ around $\V 0$.
For this function, we can bound the kernel $\tilde h(\V x, \V y) = \rho_{\mathrm{Liz},\alpha}(\V x - \V y)  - \tilde p_{\V y}(\V x)$ by
\begin{equation}
	\vert \tilde h(\V x, \V y)\vert \leq C \Vert \V x \Vert^{\lceil \alpha -d \rceil} \sup_{t \in [0,1], \vert \V k \vert = \lceil \alpha -d \rceil} \Vert \partial^{\V k} \rho_{\mathrm{Liz},\alpha}(t\V x - \V y) \Vert_2.
\end{equation}
For any fixed $\V x \in \R^d$, we then use our estimates from Proposition~\ref{eq:FundGrowth} in the appendix to conclude that $\vert \tilde h(\V x, \V y)\vert \leq C \Vert \V x \Vert^{\alpha -d}$ if $\Vert \V y \Vert \geq \Vert \V x \Vert + 1$ and $\tilde e(\V x,\cdot) \in C_0(\R^d)$ for every $\V x \in \R^d$.
Next, apply a smooth function $\chi\colon \R \to \R$ with $\chi(t)=0$ if $\vert t \vert \leq 1$ and $\chi(t)=1$ if $\vert t \vert \geq 2$ to define a bi-continuous (both in $\V x$ and $\V y$) function $x \mapsto p_{\V y}(\V x) = \chi(\Vert \V y \Vert) \tilde p_{\V y}(\V x) \in \Spc P (\R^d)$.
Now, we can bound the kernel $h(\V x, \V y)$ using the bound for $\tilde h(\V x, \V y)$ by
\begin{align}
	\vert h(\V x, \V y)\vert &\leq \max\Bigl\{ \max_{\Vert \V y \Vert \leq \Vert \V x \Vert + 2} \bigl\{\rho_{\mathrm{Liz},\alpha}(\V x - \V y) + \vert p_{\V y}(\V x) \vert\bigr\}, C \Vert \V x \Vert^{\alpha -d} \Bigr\} \leq \!C(\Vert \V x \Vert + 2)^{\alpha - d},
\end{align}
where we used Proposition~\ref{eq:FundGrowth} to produce the estimate
\begin{equation}
	\max_{\Vert \V y \Vert \leq \Vert \V x \Vert + 2} \vert p_{\V y}(\V x) \vert \leq \max_{\Vert \V y \Vert \leq \Vert \V x \Vert + 2} \sum_{\vert \V k \vert \leq \lceil \alpha -d -1 \rceil} \frac{1}{\V k!} \vert \partial^{\V k} \rho_{\mathrm{Liz},\alpha}(-\V y) \vert \Vert \V x\Vert ^{\vert \V k \vert} \leq \! C (\Vert \V x \Vert + 2)^{\alpha -d}.
\end{equation}
Finally, we  apply Theorem~\ref{thm:RepLiz} to obtain the desired representations.
The results of this section are summarized in the following corollary.
\begin{corollary}\label{cor:OperatorFrac}
	Let $\alpha>d$ with $(\alpha -d)\notin \N$.
	Then, there exists a continuous representation operator $\Op P_{\mathrm{Liz},\alpha} \colon \Spc M^{\alpha}(\R^d) \to L_{\infty,\alpha -d}(\R^d) \subset \Spc S'(\R^d)$ with $(-\Delta)^{-\alpha/2}  \{\mu\} \mapsto f$.
	Further, the point evaluations for these representatives are in the predual $C^{\alpha}(\R^d)$.
\end{corollary}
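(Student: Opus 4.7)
The plan is to verify the hypotheses of Theorem~\ref{thm:RepLiz} with the specific choices $\Spc S_1=\Spc S_2=\Spc S_\mathrm{Liz}(\R^d)$, $\Spc X=C_0(\R^d)$, and $\Op T=(-\Delta)^{\alpha/2}$, and then invoke that theorem. The density requirement $\overline{(\Spc S_\mathrm{Liz}(\R^d),\|\cdot\|_\infty)} = C_0(\R^d)$ is exactly Theorem~\ref{lem:DensLiz}, and $(-\Delta)^{\alpha/2}$ being a self-adjoint homeomorphism on $\Spc S_\mathrm{Liz}(\R^d)$ is recorded at the start of Section~\ref{sec:Liz_Frac}. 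Hence the abstract machinery yields $\Spc M^\alpha(\R^d)$ as a Banach subspace of $\Spc S^\prime_\mathrm{Liz}(\R^d)$ with predual $C^\alpha(\R^d)$ immediately, and only the Schwartz-kernel bound needs to be established.

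For the kernel, I would take the continuous representative $\rho_{\mathrm{Liz},\alpha}$ from \eqref{Eq:GreenLap}, which under the assumption $\alpha>d$ and $\alpha-d\notin 2\N$ is either $A_{\alpha,d}\|\V x\|^{\alpha-d}$ or (in the log case excluded here). Translating, $\rho_{\mathrm{Liz},\alpha}(\cdot-\V y)$ is a continuous representative of $(-\Delta)^{-\alpha/2}\{\delta(\cdot-\V y)\}$. The polynomial correction $p_{\V y}(\V x)$ is built in two stages: for $\V y\neq \V 0$, set $\tilde p_{\V y}(\V x)=T_{\lceil\alpha-d-1\rceil}\{\rho_{\mathrm{Liz},\alpha}(\cdot-\V y)\}(\V x)$, the Taylor polynomial in $\V x$ around $\V 0$; to remove the singularity of the derivatives at $\V y=\V 0$, multiply by a smooth cutoff $\chi(\|\V y\|)$ that vanishes on $\{\|\V y\|\le 1\}$ and equals $1$ on $\{\|\V y\|\ge 2\}$.

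The main technical work, and the step I expect to be the principal obstacle, is verifying the growth bound on $h(\V x,\V y)=\rho_{\mathrm{Liz},\alpha}(\V x-\V y)-p_{\V y}(\V x)$. I would split into two regimes. When $\|\V y\|\ge \|\V x\|+1$, the standard Taylor remainder bound reduces the problem to controlling $\sup_{t\in[0,1],\,|\V k|=\lceil\alpha-d\rceil}\|\partial^{\V k}\rho_{\mathrm{Liz},\alpha}(t\V x-\V y)\|$; the derivative estimates from Proposition~\ref{eq:FundGrowth} in the appendix bound this uniformly, giving $|h(\V x,\V y)|\le C\|\V x\|^{\alpha-d}$. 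In the complementary regime $\|\V y\|\le \|\V x\|+2$, I bound $|\rho_{\mathrm{Liz},\alpha}(\V x-\V y)|$ and $|p_{\V y}(\V x)|$ separately, again via Proposition~\ref{eq:FundGrowth}, to get $C(\|\V x\|+2)^{\alpha-d}$. Combining yields the required bound $|h(\V x,\V y)|\le g(\|\V x\|)$ with $g(r)=C(r+2)^{\alpha-d}\in L_{\infty,\alpha-d}(\R^d)$.

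It remains to check the bi-continuity of $h$ and the vanishing condition $h(\V x,\cdot)\in C_0(\R^d)$ for every fixed $\V x$. Bi-continuity holds because $\rho_{\mathrm{Liz},\alpha}$ is continuous on $\R^d$, $\chi$ is smooth, and the Taylor coefficients $\partial^{\V k}\rho_{\mathrm{Liz},\alpha}(-\V y)$ depend continuously on $\V y$ away from the origin where the cutoff kills them. For $\|\V y\|\to\infty$ with $\V x$ fixed, the regime analysis above applies with the role of the variables effectively reversed, and the Taylor remainder estimate gives $h(\V x,\V y)\to 0$. With these verifications, Theorem~\ref{thm:RepLiz} applies and yields both the continuous representation operator $\Op P_{\mathrm{Liz},\alpha}$ into $L_{\infty,\alpha-d}(\R^d)$ and the statement that point evaluations lie in the predual $C^\alpha(\R^d)$.
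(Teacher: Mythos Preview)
Your proposal is correct and follows essentially the same approach as the paper: the argument for Corollary~\ref{cor:OperatorFrac} is precisely the discussion in Section~\ref{sec:Liz_Frac} preceding the corollary statement, with the same Taylor-polynomial correction $\tilde p_{\V y}$, the same smooth cutoff $\chi(\|\V y\|)$, the same two-regime splitting ($\|\V y\|\ge\|\V x\|+1$ versus $\|\V y\|\le\|\V x\|+2$), and the same appeal to Proposition~\ref{eq:FundGrowth} before invoking Theorem~\ref{thm:RepLiz}. One small slip: the relevant hypothesis is $\alpha-d\notin\N$ (as required by Proposition~\ref{eq:FundGrowth}), not merely $\alpha-d\notin 2\N$.
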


\subsection{Example 2: Radon Domain Splines }\label{sec:Liz_Radon}
In this example, we work with certain hyper-spherical counterparts of $\Spc S_\mathrm{Liz}(\R^d)$ as described in Section~\ref{Sec:Notations}.
More specifically, the Euclidean indexing with $\V x \in \R^d$ is replaced by $(t, \boldsymbol \xi) \in \R \times  \mathbb{S}^{d-1}$ and we distinguish between even and odd functions.
We express this distinction with an index $m \in \N$, which simplifies the notation. 
We define
\begin{equation}
	\Spc S_{\mathrm{Liz},m}(\R \times \mathbb S^{d-1}) = \Bigl\{\varphi \in \Spc S_{m}(\R \times \mathbb{S}^{d-1}): \int_{\R \times \mathbb{S}^{d-1}} \varphi(t,\boldsymbol \xi)p(t) \dint t \dint \boldsymbol \xi = 0 \,\,\, \forall p \in \Spc P(\R) \Bigr\},
\end{equation}
where $\dint \boldsymbol \xi$ stands for the surface element on the unit sphere $\mathbb{S}^{d-1}$.
Here, the space $\Spc S_{m}(\R \times \mathbb{S}^{d-1})$ is defined as the even functions in $\Spc S(\R \times \mathbb{S}^{d-1})$ if $m$ is even and the odd ones otherwise.
Correspondingly, an element \smash{$g \in \Spc S_{\mathrm{Liz},m}'(\R \times \mathbb{S}^{d-1})$} is a continuous linear functional on $\Spc S_{\mathrm{Liz},m}(\R \times \mathbb{S}^{d-1})$ whose action on the test function $\phi$ is represented by the duality product $\langle g,\phi \rangle_\mathrm{Rad}$.
If $g$ can be identified with a function $g\colon \R \times \mathbb{S}^{d-1} \to \R$, then
\begin{align}
	\langle g,\phi\rangle_\mathrm{Rad} = \int_{\mathbb{S}^{d-1}} \int_{\R} g(t, \boldsymbol \xi) \phi(t, \boldsymbol \xi) \dint t \dint \boldsymbol \xi.
\end{align}
The evaluation functional on $\Spc S_{\mathrm{Liz}}(\R \times \mathbb S^{d-1})$ is $\delta_{\V z_0} = \delta(\cdot-t_0) \delta(\cdot - \boldsymbol \xi_0) $ with $\V z_0 = (t_0,\boldsymbol \xi_0) \in \R \times \mathbb S^{d-1}$.
A brief overview for properties of the Radon transform $\Op R$ and its filtered version $\Op K_\mathrm{rad}\Op R$ related to these spaces is given in Appendix~\ref{sec:Radon}.
In particular, it holds that both $\Op R$ and $\Op K_\mathrm{rad}\Op R$ are homeomorphisms.
Next, we briefly review Lizorkin ridges, which play a key role for the construction of representatives.

\paragraph{Lizorkin Ridges}
The 1D profile (or ridge) along the direction $\boldsymbol \xi_0 \in \R^d$ associated to $r \in \Spc S_{\mathrm{Liz}}'(\R)$ is the distribution $r_{\boldsymbol \xi_0 } \in \Spc S_\mathrm{Liz}'(\R^d)$
that satisfies
\begin{align}
	\label{Eq:Ridges}
	\forall \varphi \in \Spc S_\mathrm{Liz}(\R^d): \quad \langle r_{\boldsymbol \xi_0}, \varphi\rangle=\langle r, \Op R\{\varphi\} (\cdot,\boldsymbol \xi_0)\rangle.
\end{align}
The most basic ridge is $\delta(\boldsymbol \xi_0^\Top \cdot-t_0) \coloneqq r_{\boldsymbol \xi_0}$ with $r=\delta(\cdot-t_0)$.
It is a Dirac ridge along $\boldsymbol \xi_0$  with offset $t_0$.
Since the Fourier transform of such ridges is localized along the ray $\{\bw=\omega\boldsymbol \xi_0: \omega \in\R\}$, the Radon transform of a ridge must vanish  away from $\pm \boldsymbol \xi_0$.
This is generalized and formalized as follows.
\begin{proposition}[Radon transform of Lizorkin ridges]
	\label{Theo:Rad1DprofilesNew}
	Let $(t_0,\boldsymbol \xi_0)=\V z_0 \in \R \times \mathbb{S}^{d-1}$ and $r \in \Spc S_\mathrm{Liz}'(\R)$.
	Then,
	\begin{align}
		\Op K_\mathrm{rad}\Op R \{ \delta(\boldsymbol \xi_0^\Top\cdot- t_0) \}&=\Op P_\mathrm{even}\{\delta_{\V z_0}\}\in \Spc S_\mathrm{Liz}'(\R \times \mathbb{S}^{d-1})\label{eq:1Id}\\
		\Op R\{ \delta(\boldsymbol \xi_0^\Top\cdot) \}
		&=
		\Op P_\mathrm{even}\{ q_d\delta(\cdot -\boldsymbol \xi_0)\}\in \Spc S_\mathrm{Liz}'(\R \times \mathbb{S}^{d-1})\label{eq:2Id}\\
		\Op K_\mathrm{rad}\Op R\{ r(\boldsymbol \xi_0^\Top\cdot) \}
		&=
		\Op P_\mathrm{even}\{ r\delta(\cdot-\boldsymbol \xi_0)\}\in \Spc S_\mathrm{Liz}'(\R \times \mathbb{S}^{d-1})\label{eq:3Id}\\
		\Op R \{ r(\boldsymbol \xi_0^\Top\cdot) \}
		&=\Op P_\mathrm{even}\{ (q_d \ast r)\delta(\cdot-\boldsymbol \xi_0)\} \in \Spc S_\mathrm{Liz}'(\R \times \mathbb{S}^{d-1}),\label{eq:4Id}
	\end{align}
	where $q_d(t)=2(2\pi)^{d-1}\Fourier^{-1}\{ 1 / |\cdot|^{d-1}\}(t)$ is the 1D impulse response of the Radon-domain inverse filtering operator $\Op K_\mathrm{rad}^{-1}$.
	Here, \eqref{eq:1Id} can be identified as an even measure.
\end{proposition}

\begin{proof}
	For any $\varphi \in \Spc S_{\mathrm{Liz},0}( \R \times \mathbb{S}^{d-1})$, it holds that $\Op R \Op R^* \Op K_\mathrm{rad} \{\varphi\} = \varphi$ and, therefore, also that
	\begin{align}
		\langle \Op K_\mathrm{rad}\Op R \{r_{\boldsymbol \xi_0}\}, \varphi \rangle= \langle r_{\boldsymbol \xi_0}, \Op R^* \Op K_\mathrm{rad} \{\varphi\} \rangle = \langle r, \Op R \Op R^* \Op K_\mathrm{rad} \{\varphi\} (\cdot,\boldsymbol \xi_0)\rangle = \langle r, \varphi (\cdot,\boldsymbol \xi_0)\rangle,
	\end{align}
	from which \eqref{eq:1Id} and  \eqref{eq:3Id} do follow.
	In a similar way, we obtain, for any $\varphi \in \Spc S_{\mathrm{Liz},0}( \R \times \mathbb{S}^{d-1})$, that
	\begin{align}
		\langle \Op R \{r_{\boldsymbol \xi_0}\}, \varphi \rangle= \langle r, \Op R \Op R^* \{\varphi\} (\cdot,\boldsymbol \xi_0)\rangle = \langle r, \Op K_\mathrm{rad}^{-1} \{\varphi\} (\cdot,\boldsymbol \xi_0)\rangle,
	\end{align}
	from which \eqref{eq:2Id} and  \eqref{eq:4Id}  do follow as $\Op K_\mathrm{rad}^{-1} \{\varphi\} (t, \boldsymbol \xi_0) = (q_d \ast \varphi(\cdot, \boldsymbol \xi_0))(t)$.
\end{proof}

An equivalent form of \eqref{eq:1Id} in Proposition~\ref{Theo:Rad1DprofilesNew} is
\begin{align}
	\delta(\boldsymbol \xi_0^\Top\cdot- t_0) &=\Op R^\ast \Op P_\mathrm{even}\{\delta_{\V z_0}\}(\V x),
\end{align}
which results from $\Op R^\ast\Op K_\mathrm{rad}\Op R=\Identity$ on $\Spc S_\mathrm{Liz}'(\R^d)$.
Note that the other identities can be rewritten in a similar form, too.

\paragraph{Construction of Radon Splines}
In this example, we choose the spaces for constructing the Banach subspaces as $\Spc S_1= \Spc S_{\mathrm{Liz}, m}(\R \times   \mathbb{S}^{d-1})$, $\Spc S_2=\Spc S_\mathrm{Liz}(\R^d)$, and $\Spc X = C_{0,m}(\R^d)$, where $C_{0,m}(\R^d)$ consists of even or odd continuous functions, respectively, that vanish at infinity.
Next, recall that the derivative $\partial_t^m \colon \Spc S_{\mathrm{Liz}, m}(\R \times   \mathbb{S}^{d-1}) \to  \Spc S_\mathrm{Liz,0}(\R \times   \mathbb{S}^{d-1})$ is self-adjoint and a homeomorphism.
Its inverse can be constructed by iterating $\partial_t^{-1}\{\varphi\}(t,\boldsymbol \xi) = \int_t^\infty \varphi(r,\boldsymbol \xi) \dint r$.
Then, we choose $\Op T = \Op R^* \Op K_\mathrm{rad} \partial_t^m$
such that the dual $\Op T^*= \partial_t^m \Op K_\mathrm{rad}\Op R$ is  the concatenation of the filtered projection $\Op K_\mathrm{rad}\Op R\colon \Spc S_\mathrm{Liz}'(\R^d) \to \Spc S'_\mathrm{Liz,0}(\R \times   \mathbb{S}^{d-1})$ with the partial derivative $\partial_t^m$.
To begin, we show the required density result for the construction of the spaces related to Theorem~\ref{thm:RepLiz}, which also applies in this hyper-spherical setting, as pointed out in Remark~\ref{rem:RadialLiz}.
\begin{lemma}
	It holds that
	\begin{equation}
		\overline{\bigl(\Spc S_{\mathrm{Liz},m}(\R^d), \Vert \cdot \Vert_\infty\bigr)} = C_{0,m}(\R^d) = \begin{cases}
		C_{0,\mathrm{even}} (\R^d) &\mbox{ if $m$ is even}\\
		C_{0,\mathrm{odd}}(\R^d) &\mbox{ if $m$ is odd}.
	\end{cases}
	\end{equation}
\end{lemma}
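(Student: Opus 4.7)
The plan is to reduce this density statement to the already-established Theorem~\ref{lem:DensLiz} by exploiting a parity (anti)symmetrization operator. Let me define
\[
\Op P_m \colon \Spc S(\R^d)\to \Spc S(\R^d),\qquad \Op P_m\{\varphi\}(\V x)=\tfrac{1}{2}\bigl(\varphi(\V x)+(-1)^m\varphi(-\V x)\bigr),
\]
which is a continuous projection onto the even (if $m$ is even) or odd (if $m$ is odd) subspace of $\Spc S(\R^d)$, and which is clearly a contraction for $\|\cdot\|_\infty$. Analogously, $\Op P_m$ extends to $C_0(\R^d)\to C_{0,m}(\R^d)$ and fixes every $f\in C_{0,m}(\R^d)$.

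The first key observation is that $\Op P_m$ maps $\Spc S_\mathrm{Liz}(\R^d)$ into $\Spc S_{\mathrm{Liz},m}(\R^d)$. Indeed, if $\varphi\in\Spc S_\mathrm{Liz}(\R^d)$ then the change of variables $\V y=-\V x$ gives
\[
\int_{\R^d}\V x^{\V k}\varphi(-\V x)\,\dint\V x=(-1)^{|\V k|}\int_{\R^d}\V y^{\V k}\varphi(\V y)\,\dint\V y=0
\]
for every $\V k\in\N^d$, so $\varphi(-\cdot)$ still lies in $\Spc S_\mathrm{Liz}(\R^d)$, and hence $\Op P_m\{\varphi\}$ does as well (and it has the required parity by construction).

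Now, given $f\in C_{0,m}(\R^d)$, Theorem~\ref{lem:DensLiz} supplies a sequence $\varphi_n\in\Spc S_\mathrm{Liz}(\R^d)$ with $\|\varphi_n-f\|_\infty\to 0$. Setting $\tilde\varphi_n=\Op P_m\{\varphi_n\}\in\Spc S_{\mathrm{Liz},m}(\R^d)$ and using $\Op P_m\{f\}=f$ together with the contractivity of $\Op P_m$ in the sup norm, we get
\[
\|\tilde\varphi_n-f\|_\infty=\|\Op P_m\{\varphi_n-f\}\|_\infty\le\|\varphi_n-f\|_\infty\to 0,
\]
which shows $C_{0,m}(\R^d)\subseteq\overline{(\Spc S_{\mathrm{Liz},m}(\R^d),\|\cdot\|_\infty)}$. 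The reverse inclusion is immediate: uniform limits of continuous functions vanishing at infinity and satisfying $g(-\V x)=(-1)^m g(\V x)$ retain both properties, so the closure is contained in $C_{0,m}(\R^d)$.

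There is no serious obstacle in this argument; the only point requiring some care is the verification that the parity symmetrization preserves the Lizorkin condition, which is handled by the elementary change-of-variable computation above.
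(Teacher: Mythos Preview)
Your symmetrization argument is clean and would be a complete proof if the lemma were about Euclidean $\R^d$ with the reflection $\V x\mapsto -\V x$. The gap is contextual: despite the ``$\R^d$'' in the displayed statement, this lemma lives in the Radon-domain subsection, where the ambient space is the cylinder $\R\times\mathbb{S}^{d-1}$, the parity is with respect to $(t,\V\xi)\mapsto(-t,-\V\xi)$, and the Lizorkin condition is the vanishing of all $t$-moments $\int_{\R\times\mathbb{S}^{d-1}}\varphi(t,\V\xi)\,t^k\,\dint t\,\dint\V\xi$. (The framework needs $\overline{\Spc S_1}=\Spc X$ with $\Spc S_1=\Spc S_{\mathrm{Liz},m}(\R\times\mathbb{S}^{d-1})$, so $C_{0,m}(\R^d)$ must be read as functions on the cylinder; the paper's notation is admittedly inconsistent here.) On this domain your key input---density of the \emph{unrestricted} Lizorkin space in $C_0$---is not provided by Theorem~\ref{lem:DensLiz}, which is proved only for Euclidean $\R^d$, so the appeal to that theorem does not hit the target.

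The paper fills this differently: it first uses Stone--Weierstrass (plus continuity of the parity projection) to obtain that tensors $C_{0,m}(\R)\otimes C(\mathbb{S}^{d-1})$ are sup-dense in $C_{0,m}(\R\times\mathbb{S}^{d-1})$, and then applies the one-dimensional instance of Theorem~\ref{lem:DensLiz} on the $\R$-factor, so that $\Spc S_{\mathrm{Liz},m}(\R)\otimes C^\infty(\mathbb{S}^{d-1})\subset\Spc S_{\mathrm{Liz},m}(\R\times\mathbb{S}^{d-1})$ is already dense. Your parity-projection idea is still salvageable on the cylinder: the operator $\Op P_m\{\varphi\}(t,\V\xi)=\tfrac12\bigl(\varphi(t,\V\xi)+(-1)^m\varphi(-t,-\V\xi)\bigr)$ is a sup-contraction and preserves the cylinder Lizorkin condition (the substitution $(t,\V\xi)\mapsto(-t,-\V\xi)$ sends the $k$-th $t$-moment to $(-1)^k$ times itself), so once one has density of the full cylinder Lizorkin space---which is what the Stone--Weierstrass step supplies---your argument finishes the job. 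As written, however, the proof establishes the Euclidean analogue rather than the hyperspherical statement actually needed.
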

\begin{proof}
	By the Stone--Weierstrass theorem and the continuity of the projection onto even or odd functions, respectively, we first get that
	$\overline{C_{0,m}(\R) \times C(\mathbb S^{d-1})}= C_{0,m}(\R^d)$.
	Then, we conclude from Theorem~\ref{lem:DensLiz} that $\Spc S_\mathrm{Liz, m}(\R) \times  C^\infty (\mathbb{S}^{d-1}) \subset \Spc S_\mathrm{Liz, m}(\R \times   \mathbb{S}^{d-1})$ is dense in $C_{0,m}(\R^d)$.
\end{proof}
According to these choices, our Banach space $\Spc X^\prime_{\Op T}$ with smoothness exponent $m$ is given by  
\begin{align}
	\Spc M_{\mathrm{Rad},m}(\R^d)=\bigl\{ \Op R^* \partial_t^{-m} \{\mu\} \in \Spc S_{\mathrm{Liz}}'(\R^d) :\mu \in \Spc M_{m}(\R^d)\bigr\}\label{eq:RepMeas2}
\end{align}
with predual \smash{$C_{\mathrm{Rad},m}(\R^d) = \overline{(\Spc S_\mathrm{Liz}(\R^d),\|\partial_t^{-m} \Op R \{\cdot\}\|_{L_\infty})}$}.
Now, we show that Theorem~\ref{thm:RepLiz} can be applied for $m \geq 2$ to get continuous representations of elements in \smash{$\Spc M_{\mathrm{Rad},m}(\R^d)$}.
Define
\begin{equation}\label{eq:RadonBasis}
	\rho_{\mathrm{Rad}, m} (x) = \max(0, x )^{m-1}/(m-1)!.
\end{equation}
Then, as shown in Proposition~\ref{Theo:Rad1DprofilesNew}, $\rho_{\mathrm{Rad}, m}(\langle \cdot, \boldsymbol \xi_0 \rangle - t_0)$  with $\V z_0 = (t_0, \boldsymbol \xi_0) \in \R \times \mathbb S^{d-1} $  is an element of the equivalence class
\begin{equation}
	\tfrac12 \Op R^*  \Op P_\mathrm{even} \{\rho_{\mathrm{Rad}, m}(\cdot-t_0)\delta(\cdot-\boldsymbol \xi_0)\}= \tfrac12 \Op R^* \partial_t^{-m} \{\delta_{\V z_0} \pm \delta_{\V -\V z_0}\} \in \Spc M_{\mathrm{Rad}, m}(\R^d),
\end{equation}
where the sign depends on $m$.
Now, we have to show that there are polynomials $p_{t,\boldsymbol \xi} \in \Spc P(\R^d)$ such that the kernel $h(\V x, \V z) = \rho_{\mathrm{Rad}, m}(\langle \V x, \boldsymbol \xi \rangle - t)  - p_{t,\boldsymbol \xi}(\V x)$  with $z=(t,\boldsymbol  \xi)$ fulfills the requirements.
As $m$ is a natural number, no Taylor expansion is necessary and we can provide the correcting family of polynomials directly.
More precisely, we set
\begin{equation}
	p_{t,\boldsymbol  \xi} = \max\bigl\{0,\min\{-t,1\}\bigr\} \bigl(\langle \cdot, \boldsymbol  \xi \rangle - t\bigr)^{m-1}/(m-1)! \in \Spc P(\R^d),
\end{equation}
which ensures that $(\rho_{\mathrm{Rad},m}(\langle \V x, \cdot \rangle - \cdot) - p_{\{\cdot\}}(\V x)) \in C_{0}(\R \times   \mathbb{S}^{d-1})$ together with
\begin{equation}
	\Vert \rho_{\mathrm{Rad},m}(\langle \V x, \cdot \rangle - \cdot) - p_{\{\cdot\}}(\V x)\Vert_\infty \leq C \Vert \V x \Vert^{m-1}.
\end{equation}
Hence, we can apply Theorem~\ref{thm:RepLiz} to obtain explicit  representations.

\begin{corollary}\label{cor:OperatorRadon}
	Let $m \geq 2$.
	Then, there exists a continuous representation operator $\Op P_{\mathrm{Rad},m } \colon \Spc M_{\mathrm{Rad},m}(\R^d) \to L_{\infty,m-1}(\R^d)\subset \Spc S'(\R^d)$ with $\Op R^* \partial_t^{-m} \{\mu\} \mapsto f$.
	Further, the point evaluations for these representatives are in the predual $C_{\mathrm{Rad},m}(\R^d)$.
\end{corollary}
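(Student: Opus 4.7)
The plan is to apply Theorem~\ref{thm:RepLiz} directly (in the hyperspherical setting sanctioned by Remark~\ref{rem:RadialLiz}), verifying its hypotheses for the Schwartz kernel $h(\V x, \V z) = \rho_{\mathrm{Rad},m}(\langle \V x, \V\xi\rangle - t) - p_{t,\V\xi}(\V x)$ with $\V z = (t,\V\xi) \in \R \times \mathbb{S}^{d-1}$, the truncated-power kernel $\rho_{\mathrm{Rad},m}$ of \eqref{eq:RadonBasis}, and the polynomial correction $p_{t,\V\xi}$ constructed in the discussion preceding the corollary. Proposition~\ref{Theo:Rad1DprofilesNew} already identifies $\rho_{\mathrm{Rad},m}(\langle \cdot,\V\xi_0\rangle - t_0)$ as a continuous member of the equivalence class $\tfrac{1}{2}\Op R^* \partial_t^{-m}\{\delta_{\V z_0} \pm \delta_{-\V z_0}\}$, so this is the correct candidate kernel.

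Bi-continuity of $h$ on $\R^d \times (\R \times \mathbb{S}^{d-1})$ is immediate: since $m \geq 2$, the function $\rho_{\mathrm{Rad},m}$ is continuous on $\R$ and composes with the continuous map $(\V x, t, \V\xi) \mapsto \langle \V x,\V\xi\rangle - t$ to a jointly continuous function, while the cutoff factor $t \mapsto \max\{0,\min\{-t,1\}\}$ is continuous in $t$. For the growth bound I would partition the $t$-axis into three regimes. When $t \geq 0$ the correction $p_{t,\V\xi}$ vanishes, so $\vert h(\V x, \V z)\vert = \rho_{\mathrm{Rad},m}(\langle \V x,\V\xi\rangle - t) \leq \Vert \V x \Vert^{m-1}/(m-1)!$. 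When $t \leq -1$ the cutoff factor equals $1$, so $p_{t,\V\xi}(\V x) = (\langle \V x,\V\xi\rangle - t)^{m-1}/(m-1)!$ agrees with $\rho_{\mathrm{Rad},m}(\langle \V x,\V\xi\rangle - t)$ on the half-space $\{\langle \V x,\V\xi\rangle \geq t\}$, so $h$ vanishes there, while on the complement the chain $\Vert \V x \Vert \geq \vert\langle \V x,\V\xi\rangle\vert > \vert t\vert$ gives $\vert h(\V x,\V z)\vert \leq (2 \Vert \V x \Vert)^{m-1}/(m-1)!$. Finally, on the bounded strip $t \in (-1,0)$ a direct estimate yields $\vert h\vert \leq C(1+\Vert \V x \Vert)^{m-1}$. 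Combining, $\vert h(\V x,\V z)\vert \leq C(1+\Vert \V x \Vert)^{m-1}$, so the majorant lies in $L_{\infty,m-1}(\R^d)$.

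The decay $h(\V x,\cdot) \in C_0(\R \times \mathbb{S}^{d-1})$ for fixed $\V x$ follows from the same case analysis: since $\mathbb{S}^{d-1}$ is compact, the only way $(t,\V\xi)$ can tend to infinity is along $\vert t\vert \to \infty$, and for $\vert t\vert > \Vert \V x \Vert$ we are always in one of the two regimes (either $t \geq 0$ with $\langle\V x,\V\xi\rangle < t$, or $t \leq -1$ with $\langle\V x,\V\xi\rangle \geq t$) where $h$ vanishes identically. All hypotheses of Theorem~\ref{thm:RepLiz} are then satisfied, so its conclusion yields both continuity of $\Op P_{\mathrm{Rad},m}$ into $L_{\infty,m-1}(\R^d)$ and weak* continuity of the point evaluations, which is exactly the statement of the corollary. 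The main subtlety is the bookkeeping of the three $t$-regimes and the geometric observation that on the half-space where truncation is inactive, $p_{t,\V\xi}$ matches $\rho_{\mathrm{Rad},m}$ exactly; once this is in hand, both the growth and the decay estimates fall out together.
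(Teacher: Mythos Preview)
Your proposal is correct and follows the paper's own route exactly: the paper also derives the corollary by applying Theorem~\ref{thm:RepLiz} with the same kernel $\rho_{\mathrm{Rad},m}$ and the same polynomial correction $p_{t,\V\xi}$, merely asserting the $C_0$-membership and the growth bound without spelling out the three $t$-regimes; your case analysis is precisely the verification the paper leaves implicit. One tiny imprecision: your claim that ``for $|t|>\|\V x\|$ we are always in one of the two vanishing regimes'' fails when $\|\V x\|<1$ and $t\in(-1,-\|\V x\|)$, but since decay at infinity only requires the behaviour for $|t|>\max\{1,\|\V x\|\}$ this does not affect the argument.
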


\section{Variational Problems that Involve Lizorkin Spaces}\label{sec:VariationalProb}
As warm-up, we first revisit periodic (fractional) splines \cite{Fageot2020tv}.
For variational problems in which the regularization favors such functions, we can use the projection \eqref{Eq:ProjPerio} to get representations.
Note that our approach is applicable to a very broad class of problems; namely, whenever a continuous projection and a suitable extension are available.
By contrast, after this warm-up example, we study variational problems where no projection onto the involved spaces is available.
There, we focus on problems that involve our previously constructed Banach subspaces, which usually only consist of equivalence classes.
This  makes the situation much more delicate than before and the use of a representation operator is necessary.
Based on this operator, we are able to obtain similar results as before.

\subsection{Periodic Fractional Splines}\label{sec:PeriodicLizSpline}
We use our tools to derive a representer theorem that is an alternative to the one in  \cite{Fageot2020tv}.
To this end, we need the space
$C(\mathbb{T})=\overline{(\Spc S(\mathbb{T}),\|\cdot\|_{L_\infty})}$ of continuous, $T$-periodic functions.
Its topological dual $\Spc M(\mathbb{T})$ (namely, the space of $T$-periodic Radon measures) can be specified as
\begin{align}
	\Spc M(\mathbb{T})=\bigl\{f\in \Spc S'(\mathbb{T}): \|f\|_{\Spc M}<\infty\bigr\}
	\quad \text{ with } \quad
	\|f\|_{\Spc M} \coloneqq \sup_{\phi \in \Spc S(\mathbb{T}): \|\phi\|_{L_\infty}\le 1}
	\langle f,\phi\rangle.
\end{align}
Since the projection \eqref{Eq:ProjPerio}  continuously extends to these spaces, we have the decomposition $C(\mathbb{T})=C_0(\mathbb{T}) \oplus \Spc P_0$  with $C_0(\mathbb{T})=\Op P_0(C(\mathbb{T}))$.
The final ingredient are the sampling functionals in $\Spc M_0(\mathbb{T}) = C_0(\mathbb{T})' \simeq \Spc M(\mathbb{T}) / \Spc P_0$, where we use $\Op P_0^*$ to identify representations.
\begin{theorem}[Periodic Lizorkin sampling functionals]
	\label{Theo:ExtremePerio}
	The Lizorkin sampling functionals $\delta_{0}(\cdot-t_0)=\Op P_0^* \{\delta_\mathrm{perio}(\cdot - t_0)\} \in \Spc M_0(\mathbb{T})$ with $t_0 \in \mathbb{T}$ have the following properties:
	
	\begin{enumerate}
		\item Explicit representation: $\displaystyle \delta_0(\cdot-t_0)=\delta_\mathrm{perio}(\cdot - t_0)-1$.
		\item Sampling at $t_0$: $
		\langle  \delta_{0}(\cdot-t_0),\phi\rangle=\phi(t_0)$ for all $\phi \in C_{0}(\mathbb{T}).$
		\item Zero mean: $\langle \delta_{0}(\cdot-t_0),1\rangle=0$ for all $t_0\in \R$.
		\item  
		It holds that  $\|\delta_{0}(\cdot-t_0)\|_{\Spc M_0}=1$ for any $t_0 \in \mathbb{T}$.
		\item For finite sets $\{t_k\}\subset\mathbb{T}$ of distinct points, it holds $\|\sum_{k} a_k\delta_0(\cdot-t_k)\|_{\Spc M_0}=\sum_{k} |a_k|$.
		\item If $e_k\in \mathrm{Ext}B(\Spc M_0(\mathbb{T}))$, then $e_k=\pm\delta_0(\cdot-t_k)$ for some $t_k \in\mathbb{T}$.
	\end{enumerate}
	
\end{theorem}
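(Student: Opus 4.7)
The six assertions split into three groups. Properties 1--3 reduce to direct computation: applying the projector formula \eqref{Eq:ProjPerio} together with the identity $\Op P_0^\ast = \Op P_0$ noted in the text, one computes $\Op P_0^\ast\{\delta_\mathrm{perio}(\cdot - t_0)\} = \delta_\mathrm{perio}(\cdot - t_0) - \langle 1, \delta_\mathrm{perio}(\cdot - t_0)\rangle_\mathbb{T} \cdot 1 = \delta_\mathrm{perio}(\cdot - t_0) - 1$, which is property 1. Pairing the resulting expression with any $\phi \in C_{0}(\mathbb{T})$ and invoking $\langle 1, \phi\rangle_\mathbb{T} = 0$ yields property 2, while pairing with the constant $1$ and using $\langle 1, 1\rangle_\mathbb{T} = 1$ yields property 3.

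For properties 4 and 5, the upper bound $\|\sum_k a_k \delta_0(\cdot - t_k)\|_{\Spc M_0(\mathbb{T})} \leq \sum_k |a_k|$ follows immediately from property 2 combined with $|\phi(t_k)| \leq \|\phi\|_{L_\infty}$. For the matching lower bound, the plan is to construct near-optimal test functions: for small $\epsilon > 0$, take smooth bumps $\psi_\epsilon^k \in \Spc S(\mathbb{T})$ of height $1$ supported on disjoint $\epsilon$-neighbourhoods of the distinct points $t_k$, signed according to $\mathrm{sign}(a_k)$, subtract their (order-$\epsilon$) mean to land in $\Spc S_0(\mathbb{T})$, and renormalize in $L_\infty$. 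Pairing the resulting $\phi_\epsilon$ with $\sum_k a_k\delta_0(\cdot - t_k)$ via property 2 yields $\sum_k |a_k|$ in the limit $\epsilon \to 0$.

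The main obstacle is property 6, which is proved via the Milman converse to Krein-Milman inside the weak*-compact convex set $B(\Spc M_0(\mathbb{T}))$. The intermediate target is the identification
\begin{equation*}
	B(\Spc M_0(\mathbb{T})) = \overline{\mathrm{co}}^\ast \bigl\{\pm \delta_0(\cdot - t) : t \in \mathbb{T}\bigr\}.
\end{equation*}
The inclusion ``$\supset$'' follows from property 4. For ``$\subset$'' the plan combines (i) the Riesz characterization $\mathrm{Ext}\, B(\Spc M(\mathbb{T})) = \{\pm \delta_\mathrm{perio}(\cdot - t) : t \in \mathbb{T}\}$ together with Krein-Milman; (ii) the weak*-continuity of $\Op P_0^\ast \colon \Spc M(\mathbb{T}) \to \Spc M_0(\mathbb{T})$, which by property 1 carries $\pm \delta_\mathrm{perio}(\cdot - t) \mapsto \pm \delta_0(\cdot - t)$ and hence maps closed convex hulls into closed convex hulls of the images; and (iii) the surjectivity $\Op P_0^\ast(B(\Spc M(\mathbb{T}))) = B(\Spc M_0(\mathbb{T}))$. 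Step (iii) is the delicate point: given $e \in B(\Spc M_0(\mathbb{T}))$, the map $c \mapsto \|e + c\|_{\Spc M(\mathbb{T})}$ on $\R$ is continuous and coercive (since $\|1\|_{\Spc M(\mathbb{T})} > 0$), hence attains its infimum at some $c^\ast$; by the standard quotient identification $\Spc M(\mathbb{T})/\Spc P_0 \cong \Spc M_0(\mathbb{T})$ this infimum equals $\|e\|_{\Spc M_0(\mathbb{T})} \leq 1$, so $f = e + c^\ast \in B(\Spc M(\mathbb{T}))$ is the required lift. Once the displayed identity is established, Milman's theorem forces every element of $\mathrm{Ext}\, B(\Spc M_0(\mathbb{T}))$ into the weak*-closure of $\{\pm \delta_0(\cdot - t) : t \in \mathbb{T}\}$; the weak*-continuity of $t \mapsto \delta_0(\cdot - t)$ (a consequence of property 2) combined with the compactness of $\mathbb{T}$ shows this generator set is already weak*-closed, completing the argument.
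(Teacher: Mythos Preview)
Your proof is correct and, for Items 1--5, essentially identical to the paper's: the paper also declares Items 1--3 immediate from the projector formula, and for Items 4--5 likewise bounds above by the $\Spc M$-norm and saturates the bound with localized test functions (the only cosmetic difference being that the paper builds its bumps directly with zero mean rather than subtracting the mean and renormalizing as you do).

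For Item 6 the overall strategy is the same---push the Krein--Milman description of $B(\Spc M(\mathbb{T}))$ through the projector $\Op P_0^\ast$---but the packaging differs. The paper invokes its general Proposition~\ref{Prop:ExtremeProj} on isometric projections, which in one stroke yields $B(\Spc M_0)=\mathrm{cch}\,\tilde E$ and hence $\mathrm{Ext}\,B(\Spc M_0)\subset\tilde E$. You instead argue from first principles: the surjectivity $\Op P_0^\ast\bigl(B(\Spc M)\bigr)=B(\Spc M_0)$ is obtained via the isometric quotient identification $\Spc M_0(\mathbb{T})\cong\Spc M(\mathbb{T})/\Spc P_0$ together with proximinality of the finite-dimensional subspace $\Spc P_0$, and you then close with Milman's partial converse plus the weak$^\ast$-closedness of $\{\pm\delta_0(\cdot-t):t\in\mathbb{T}\}$ (compact image of $\mathbb{T}$). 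Your route is a bit longer but more self-contained, and it makes explicit the closedness step that is needed to pass from ``$B(\Spc M_0)$ is the closed convex hull of $\tilde E$'' to ``every extreme point lies in $\tilde E$ itself''---a point the paper's appendix proposition states but does not spell out.
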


\begin{proof} The first 3 items follow directly by construction.
	Now, we prove Item 4.
	For any $(f,\phi) \in \Spc M(\mathbb{T}) \times C_0(\mathbb{T})$, it holds that
	\begin{align}
		\langle\Op P^\ast_0\{ f\},  \phi \rangle=\langle f, \Op P_0 \{\phi\} \rangle=\langle f, \phi \rangle.
	\end{align}
	In particular, $\langle \delta_{0}(\cdot-t_0),\phi\rangle=\langle \delta_\mathrm{perio}(\cdot-t_0),\phi\rangle$.
	By definition of the dual norm, we have that
	\begin{align}
		\|\delta_0(\cdot-t_0)\|_{\Spc M_0}&=\sup_{\phi \in C_0(\mathbb{T}): \|\phi\|_{L_\infty}\le 1}
		\langle \delta_0(\cdot-t_0),\phi\rangle =\sup_{\phi \in C_0(\mathbb{T}): \|\phi\|_{L_\infty}\le 1}
		\langle \delta_\mathrm{perio}(\cdot-t_0),\phi\rangle\notag\\
		&\le\sup_{\phi \in C(\mathbb{T}): \|\phi\|_{L_\infty}\le 1}
		\langle \delta_\mathrm{perio}(\cdot-t_0),\phi\rangle=\|\delta_\mathrm{perio}(\cdot-t_0)\|_{\Spc M}=1.
	\end{align}
	Next, we show that this bound is sharp by fixing $0<\epsilon<T/2$ 
	and choosing the test function
	\begin{equation}
		\phi_{\epsilon,\mathrm{perio}}(\cdot - t_0)=\sum_{n \in \Z} \varphi_0\Big( \frac{ \cdot + n T -t_0}{\epsilon}\Big),
	\end{equation}
	where $\varphi_0\colon \R \to [-1,1]$ is continuous with $\varphi_0(0)=1$, $\int_\R \varphi_0(t)\dint t=0$, and $\mathrm{supp}(\varphi_0)\subset [-1,1]$.
	Then, the statement follows from $\phi_{\epsilon,\mathrm{perio}}(t_0)=1$
	and $\|\phi_{\epsilon,\mathrm{perio}}\|_{L_\infty}\le 1$.
	Similarly, for Item 5, we first observe that the triangle inequality leads to
	\begin{equation}
		\sup_{\phi \in C_0(\mathbb{T}): \|\phi\|_{L_\infty}\le 1}
	\Bigl \langle\sum_{k} a_k\delta_0(\cdot-t_k),\phi\Bigr \rangle=\Bigl\|\sum_{k} a_k\delta_0(\cdot-t_k)\Bigr\|_{\Spc M_0}\le \sum_{k} |a_k|.
	\end{equation}
	Since the $t_k$ are distinct, there exists $\epsilon>0$ with
	$|t_k-t_{k'}|> 2 \epsilon$ for all $k'\ne k$.
	Then, we take the critical function
	$\phi_\mathrm{crit}(t)=\sum_k \mathrm{sgn}(a_k)\phi_{\epsilon,\mathrm{perio}}(t-t_k)$ satisfying $\|\phi_\mathrm{crit}\|_{\mathrm L_\infty}=1$,
	which saturates the bound.
	
	Due to $M_0(\mathbb{T}) \simeq \Spc M(\mathbb{T}) / \Spc P_0$,  it holds that $ \Op P_0^* B(\Spc M(\mathbb{T})) =  B(\Spc M_0(\mathbb{T}))$.
	By \cite[Lem.~3.2]{Bredies2020}, we then get that $\mathrm{Ext}B(\Spc M_0(\mathbb{T})) \subset \Op P_0^\ast \mathrm{Ext} B(\Spc M(\mathbb{T}))$.
	Since the extreme points of $B(\Spc M(\mathbb{T}))$ are $\{\pm \delta(\cdot-t)\}_{t \in \mathbb{T}}$, the last claim readily follows.
\end{proof}

Now, we are  able to formulate the approximation problem.
Given a series of (possibly noisy) data points $(y_m,t_m)\in \R \times \mathbb{T}$, $m=1, \dots, M$, we consider the task of reconstructing a periodic function
$f\colon \mathbb{T}\to \R$ such that $f(t_1)\approx y_1, \dots, f(t_M)\approx y_M$ without overfitting.
Since this problem is inherently ill-posed, we put a penalty on $\Vert \Op D^\alpha \{f\} \Vert_{\Spc M_0}$ in order to favor solutions with ``sparse'' $\alpha$th derivatives. The corresponding native space is 
\begin{align}
	\Spc M^\alpha(\mathbb{T})&=\{f \in \Spc S'(\mathbb{T}): \|\Op D^\alpha \{f\}\|_{\Spc M_0}<\infty\}\nonumber \\
	&=\{\Op D^{-\alpha}\{w\} + p_0: (w,p_0) \in \Spc M_0(\mathbb{T}) \times \Spc P_0\}.
\end{align}
In particular, this means that $\Spc M^\alpha(\mathbb{T})=\Spc U' \oplus \Spc P_0$ with $\Spc U'=\Op D^{-\alpha}(\Spc M_0(\mathbb{T}))$, which is isomorphic to  $\Spc M_0(\mathbb{T}) \times \Spc P_0$.
The basic atoms for the  representation of minimum-norm interpolators in $\Spc U'$ are the extreme points $e_k$ of the unit ball $B_{\Spc U'}(1)$.
Due to the isometry between $\Spc U'$ and  $\Spc M_0(\mathbb{T})$,
we have that $\mathrm{Ext}B_{\Spc U'}(1)=\Op D^{-\alpha}(\mathrm{Ext}B_{\Spc M_0}(1))$, which in light of Items 1 and 6 in Theorem~\ref{Theo:ExtremePerio} yields that
\begin{align}
	e_k=\Op D^{-\alpha}\{\delta_0(\cdot-t_k)\}=\rho_{\mathrm{perio}, \alpha}(\cdot -t_k),
\end{align}
where
\begin{align}
	\label{Eq:PerioGreen}
	\rho_{\mathrm{perio}, \alpha}(t)=\Op D^{-\alpha}\{\delta_0\}(t)=\sum_{n \in \Z \backslash \{0\}} \frac{1}{(\jj n \omega_0)^\alpha} \ee^{\jj n \omega_0 t}.
\end{align}
The latter formula is obtained from \eqref{Eq:FracInt} by using that $\widehat \delta_0[n] =\widehat  \delta[n]$ for $n \neq 0$.
The resulting Fourier series \eqref{Eq:PerioGreen} converges to a continuous function for $\alpha>1$.
The functions $\rho_{\mathrm{perio}, \alpha}$ are the building blocks of the (non-periodic) fractional splines of degree $\alpha-1$.
Now, the direct application of the third case of  \cite[Thm.\ 3]{Unser2022} yields the following.
\begin{theorem} [Minimum-energy periodic spline reconstruction]\label{thm:PerSPline}
	Let $E\colon \R \times \R \to \R$ be a strictly convex loss function and $\lambda>0$ some regularization parameter.
	Then, for any given data points $(y_m,t_m)\in \R \times \mathbb{T}, m=1, \dots, M$, the solution set of the
	functional-approximation problem with $\alpha> 1$,
	\begin{align}\label{eq:SplineMinimization}
		S=\argmin_{f \in \Spc M^\alpha(\mathbb{T}) } \sum_{m=1}^M E\big(y_m ,f(t_m)\big) +  
		\lambda \|\Op D^\alpha f\|_{\Spc M_0},
	\end{align}
	is nonempty and weak*-compact. It is the weak* closure of the convex hull of its extreme points, which are all of the form
	\begin{align}
		f_\mathrm{Ext}(t)= b_0+\sum_{k=1}^{K_0} a_k 
		\rho_{\mathrm{perio},\alpha}(t- \tau_k)
		\label{Eq:Extremesplineperio}
	\end{align}
	for some $K_0\le M-1$, weights and knots $(a_k,\tau_k) \in \R \times \R$, $k=1,\dots,K_0$, and the periodic basis function $\rho_{\mathrm{perio},\alpha}\colon \R \to \R$  specified by \eqref{Eq:PerioGreen}. 
\end{theorem}
\begin{proof}
	First, we identify the (unique) predual space $C_0^\alpha(\mathbb{T})=\Spc U \oplus \Spc P_0$
	such that $\Spc M^\alpha(\mathbb{T})=\Spc U' \oplus \Spc P'_0$.
	By the injectivity of $\Dop^{\alpha\ast}$ on $C_0(\mathbb{T})=\overline{(\Spc S_0(\R),\|\cdot\|_{L_\infty})}$ and by setting $\Spc U=\Op D^{\alpha\ast}(C_0(\mathbb{T}))$, we readily verify that
	$\Spc U'=\Op D^{-\alpha}(\Spc M_0(\mathbb{T}))$. This allows us to identify the predual space as
	\begin{align}
		C_0^\alpha(\mathbb{T})&=\Spc U \oplus \Spc P_0=\bigl\{\Dop^{\alpha\ast}\{v\} + p_0: (v,p_0) \in C_0(\mathbb{T})\times \Spc P_0\bigr\},
	\end{align}
	which is a Banach space isomorphic to $C_0(\mathbb{T})\times \Spc P_0$ as expected.
	The technical prerequisite for applying \cite[Thm.\ 3]{Unser2022} is the weak*-continuity of the sampling functionals $\delta(\cdot-t_m)$, which is equivalent to $\delta(\cdot-t_m) \in \Spc C_0^\alpha(\mathbb{T})$.
	To this end, we have that
	\begin{align}
		\Dop^{-\alpha\ast}\{\delta(\cdot-t_m)-1\}=\rho_{\mathrm{perio}, \alpha}(t_m-\cdot),
	\end{align}
	with the latter function being included in $C_0(\mathbb{T})$ if and only if $\alpha>1$ or, equivalently, when the Fourier coefficients in \eqref{Eq:PerioGreen} are in $\ell_1(\Z)$.
\end{proof}
\begin{remark}
	 Functions of the form \eqref{Eq:Extremesplineperio} are fractional splines if and only if $\sum_{k=1}^{K_0} a_k = 0$, see  \cite[Prop.\ 3]{Fageot2020tv}.
	 To ensure this, we can add the constraint $\langle f, e^{\jj \omega_0 \cdot} \rangle_{\mathbb T}= 0$ in Theorem~\ref{thm:PerSPline}, which again leads to extreme points of the form \eqref{Eq:Extremesplineperio} with $K_0\le M$.
	 Since $\Vert \Op D^\alpha f \Vert_{\Spc M} \geq \Vert \Op D^\alpha f \Vert_{\Spc M_0}$ with equality holding for extreme points, this modified version of Theorem~\ref{thm:PerSPline} remains true if we replace $\Vert  \Op D^\alpha f \Vert_{\Spc M_0}$ in \eqref{eq:SplineMinimization} by $\Vert  \Op D^\alpha f \Vert_{\Spc M}$.
	 Plots of the fractional splines $\rho_{\mathrm{perio},\alpha} - \rho_{\mathrm{perio},\alpha}(\cdot - \frac{T}{2})$ for different $\alpha$ are given in \cite[Figure 1]{Fageot2020tv}.
\end{remark}
\begin{remark}[Numerical approach]
	To find $f$, we can overparameterize it with knots $\tau_k$ chosen over a fine uniform grid.
	The respective weights are then recovered by solving a discrete penalized basis pursuit problem using state-of-the-art proximal algorithms \cite{DeFaGu2019,GuFaUn2018} or Bregman methods \cite{BuRoTe2022}.
	While conceptually simple, this is computationally expensive since the underlying grid needs to have much more knots than $M-1$. 
	More advanced meshfree approaches for directly recovering the positions $\tau_k$ can be developed using, for example, the Franck--Wolfe algorithm~\cite{DeDuPe2019,FlGoWe2020}.
\end{remark}

\subsection{A General Variational Problem Framework}\label{sec:VarFramework}
In this section, we first state a general variational problem framework that involves the constructed Banach spaces and shares some similarities with the approach presented in Section~\ref{sec:PeriodicLizSpline}, but for which no projector is available.
Here, the derived representation operator from Theorem~\ref{thm:RepLiz} makes the framework explicit, again with the advantage that we can rely on the general abstract machinery for the derivation of theoretical results.
We then treat several useful special cases related with the Banach subspaces of $\Spc S^\prime_\mathrm{Liz}(\R^d)$ introduced as examples in Section~\ref{sec:LizorkinRep}.

By construction, we immediately deduce that the extreme points of the unit ball in $\Spc X^\prime_{\Op T}$ are given by $\tilde e_k= \Op T^{-*}  \{e_k\}\in \Spc X^\prime_{\Op T}$, where $e_k$ are the extreme points of the unit ball in $\Spc X^\prime$.
Now, we are able to formulate a variational problem that involves our constructed Banach spaces  and provide a representer theorem for the structure of the solutions.
\begin{theorem}[Representer theorem \cite{Unser2022}]\label{thm:representer}
	Let  the linear operator $\nu \colon \Spc X_{\Op T}^\prime \to \R^M$ be given by $f \mapsto(\langle \nu_1,f\rangle,\ldots,\langle \nu_M,f\rangle)$ with $\nu_i \in \Spc X_{\Op T}$ being linearly
	independent.
	Further, let $E \colon  \R^M \times \R^M \to \R_+ \cup \{+ \infty\}$ be proper, lower-semicontinuous, and convex and let $\psi \colon \R_+ \to \R_+$ be strictly increasing and convex.
	Then, for any fixed $y \in \R^M$, the solution set $S$ of the generic optimization problem
	\begin{equation}
		\argmin_{f \in \Spc X^\prime_{\Op T}} E\bigl(y, \nu\{f\}\bigr) +  \psi (\Vert f \Vert_ {\Spc X^\prime_{\Op T}})
	\end{equation}
	is nonempty, convex, and weak*-compact.
	If, additionally, $E$ is strictly convex or if it imposes the equality constraint $y = \nu\{f\}$, then $S$ is the weak* closure of the convex hull of its extreme points, which can all be expressed as
	\begin{equation}
		f_0 = \sum_{k=1}^{K_0} c_k \Op T^{-*} \{e_k\}
	\end{equation}
	with $K_0 \leq M$ and $c_k \in \R$.
\end{theorem}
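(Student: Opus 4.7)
The plan is to reduce the statement to the abstract representer theorem of \cite{Unser2022} applied to the dual Banach space pair $(\Spc X_{\Op T}, \Spc X_{\Op T}^\prime)$ constructed in Section~\ref{sec:LizorkinRep}. That abstract result requires: a Banach space equipped with a weak* topology inherited from a predual; linearly independent measurement functionals that are weak* continuous; a proper, lsc, convex data fidelity; and a strictly increasing convex regularizer. Our job is to check that our setting fits.

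First I would verify the weak* structure. By construction, $\Spc X_{\Op T}^\prime$ is a Banach space with norm $\Vert f \Vert_{\Spc X_{\Op T}^\prime} = \Vert \Op T^* \{f\} \Vert_{\Spc X^\prime}$, and the dual argument in Section~\ref{sec:LizorkinRep} identifies $\Spc X_{\Op T} = \overline{(\Spc S_2, \Vert \Op T^{-1}\{\cdot\}\Vert_{\Spc X})}$ as a predual; equivalently, $\Op T^{-*}\colon \Spc X^\prime \to \Spc X_{\Op T}^\prime$ is an isometric isomorphism whose preadjoint is the continuous extension of $\Op T^{-1}$. Consequently $\Spc X_{\Op T}^\prime$ carries the weak* topology relative to $\Spc X_{\Op T}$, and the hypothesis $\nu_i \in \Spc X_{\Op T}$ is exactly the statement that the measurement functionals $f \mapsto \langle \nu_i, f \rangle$ are weak* continuous on $\Spc X_{\Op T}^\prime$. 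Their linear independence is assumed.

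Second, with the hypotheses verified I would invoke \cite[Thm.~1]{Unser2022} (or the version stated there corresponding to our two cases on $E$) to conclude that $S$ is nonempty, convex and weak*-compact, and that in the strictly convex or equality-constrained case $S$ equals the weak*-closure of the convex hull of its extreme points, each of which is a combination of at most $M$ extreme points of the unit ball of $\Spc X_{\Op T}^\prime$. Finally, to put the extreme points into the claimed form, I would use that the isometric isomorphism $\Op T^{-*}\colon \Spc X^\prime \to \Spc X_{\Op T}^\prime$ maps the unit ball of $\Spc X^\prime$ bijectively onto the unit ball of $\Spc X_{\Op T}^\prime$ and, being affine, preserves extreme points. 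Hence every extreme point of $B_{\Spc X_{\Op T}^\prime}$ has the form $\Op T^{-*}\{e_k\}$ for some extreme point $e_k$ of $B_{\Spc X^\prime}$, yielding the representation $f_0 = \sum_{k=1}^{K_0} c_k \Op T^{-*}\{e_k\}$ with $K_0 \le M$.

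The main obstacle, such as it is, is not analytic but structural: one must be careful that $(\Spc X_{\Op T}, \Spc X_{\Op T}^\prime)$ genuinely forms a predual/dual pair (rather than $\Spc X_{\Op T}$ being merely norming), so that the Banach--Alaoglu theorem and the abstract representer machinery apply verbatim. This is the content of the isometric identification via $\Op T^{-*}$ noted above; once this is in hand, the proof amounts to pushing the conclusion of \cite[Thm.~1]{Unser2022} through the isometry.
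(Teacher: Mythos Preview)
Your proposal is correct and matches the paper's approach: the paper does not give a separate proof but simply cites \cite{Unser2022} for the abstract representer theorem, having noted just before the statement that the extreme points of the unit ball in $\Spc X_{\Op T}^\prime$ are precisely $\Op T^{-*}\{e_k\}$ by the isometry $\Op T^{-*}\colon \Spc X^\prime \to \Spc X_{\Op T}^\prime$. Your explicit verification that $(\Spc X_{\Op T},\Spc X_{\Op T}^\prime)$ is a genuine predual/dual pair (so that the $\nu_i \in \Spc X_{\Op T}$ are weak* continuous and Banach--Alaoglu applies) is exactly the structural check the paper establishes in Section~\ref{sec:LizorkinRep} before invoking the cited result.
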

\begin{remark}
	The result can be slightly strengthened if $\Spc X^\prime$ is strictly convex, see \cite{Unser2022} for details.
\end{remark}
As illustration, we briefly derive two corollaries from Theorem~\ref{thm:representer}.
They are based on the two Banach subspaces of $\Spc S^\prime_\mathrm{Liz}(\R^d)$ introduced in Section~\ref{sec:LizorkinRep}.

\subsection{Fractional Splines}\label{sec:LizSplines}
Here, we extend our investigations in Section~\ref{sec:PeriodicLizSpline} to non-periodic splines using Theorem~\ref{thm:representer} and the discussion from Section~\ref{sec:Liz_Frac}, which is summarized in Corollary~\ref{cor:OperatorFrac}.
Since the point evaluations are in the predual, the application of Theorem~\ref{thm:representer}, together with the explicit representation of elements in $ \Spc M^\alpha(\R^d)$, yields the following.
\begin{corollary} [Minimum-energy Lizorkin splines]
	Let $E\colon \R \times \R \to \R$ be a strictly convex loss function,
	let $(\V x_m,y_m)\in \R^d \times \R, m=1,\dots,M$, be a set of data points, and let $\lambda>0$ be some regularization parameter.
	Then, for $\alpha>d$ and $\alpha -d \notin \N$, the solution set $S$ of the
	functional optimization problem  
	\begin{align}
		\argmin_{f \in \Spc M^\alpha(\R^d) }  \sum_{m=1}^M E\bigl(y_m ,P_{\mathrm{Liz},\alpha} \{f\}(\V x_m)\bigr) +  
		\lambda \|(-\Delta)^{\alpha/2} \{f\}\|_{\Spc M}
	\end{align}
	is nonempty and weak*-compact. It is the weak* closure of the convex hull of its extreme points, which are all of the form
	\begin{align}
		f_\mathrm{Ext}= \Op P_{\mathrm{Liz},\alpha}\biggl\{\sum_{k=1}^{K_0} a_k (-\Delta)^{\alpha/2} \{\delta(\cdot -\V x_k)\}\biggr\}= \sum_{k=1}^{K_0} a_k 
		\bigl(\rho_{{\mathrm{Liz},\alpha}}(\cdot -  \V x_k) - p_{\V x_k}\bigr)
		\label{Eq:ExtremesplineLiz}
	\end{align}
	for some $K_0\le M$, expansion parameters (weights and adaptive centers) $(a_k,\V x_k) \in \R \times \R^d$ for $k=1,\dots,K_0$, $p_{\V x_k} \in \Spc P_{\lceil \alpha -d - 1\rceil}(\R^d)$, and the radial basis function $\rho_{{\mathrm{Liz},\alpha}}\colon \R^d \to \R$ from Section~\ref{sec:Liz_Frac}. 
\end{corollary}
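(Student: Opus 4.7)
The plan is to apply the abstract representer theorem (Theorem~\ref{thm:representer}) in the concrete setting developed in Section~\ref{sec:Liz_Frac}. The relevant dual pair is $(\Spc X, \Spc X^\prime) = (C_0(\R^d), \Spc M(\R^d))$, and the operator is $\Op T = (-\Delta)^{\alpha/2}$, which is a homeomorphism on $\Spc S_\mathrm{Liz}(\R^d)$ and self-adjoint. With these choices, $\Spc X^\prime_{\Op T} = \Spc M^\alpha(\R^d)$ carries precisely the norm $\|(-\Delta)^{\alpha/2}\{\cdot\}\|_{\Spc M}$ that appears in the regularizer, with predual $C^\alpha(\R^d)$, so the density and compatibility prerequisites of the general construction are already available.

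First, I would cast the measurement functionals as $\nu_m \colon \Spc M^\alpha(\R^d) \to \R$, $\nu_m\{f\} = \Op P_{\mathrm{Liz},\alpha}\{f\}(\V x_m)$. By Corollary~\ref{cor:OperatorFrac}, these point evaluations are weak* continuous and thus elements of the predual $C^\alpha(\R^d)$. Linear independence for distinct $\V x_m$ is immediate because the representatives are continuous functions on $\R^d$, and point evaluations at distinct points of continuous functions cannot be linearly dependent. Taking $\psi(t) = \lambda t$ (strictly increasing and convex) and using the strict convexity of $E$, the problem fits the exact format of Theorem~\ref{thm:representer}, yielding non-emptiness, weak* compactness, and the characterization of $S$ as the weak* closure of the convex hull of its extreme points.

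Next, I would identify those extreme points concretely. Theorem~\ref{thm:representer} gives $f_0 = \sum_{k=1}^{K_0} c_k \Op T^{-*}\{e_k\}$ with $K_0 \le M$ and each $e_k$ an extreme point of the unit ball of $\Spc M(\R^d)$. The classical Riesz--Markov characterization identifies these extreme points as the signed point masses $\pm\delta(\cdot - \V x_k)$. Since $(-\Delta)^{\alpha/2}$ is self-adjoint, $\Op T^{-*} = (-\Delta)^{-\alpha/2}$. Absorbing signs into the coefficients then gives $f_0 = \sum_k a_k (-\Delta)^{-\alpha/2}\{\delta(\cdot - \V x_k)\}$ as equivalence classes in $\Spc S^\prime_\mathrm{Liz}(\R^d)$.

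Finally, I would pass to the concrete function-valued representative by applying $\Op P_{\mathrm{Liz},\alpha}$. Linearity of the representation operator together with the explicit formula from Section~\ref{sec:Liz_Frac} gives
$$\Op P_{\mathrm{Liz},\alpha}\bigl\{(-\Delta)^{-\alpha/2}\{\delta(\cdot-\V x_k)\}\bigr\}(\V x) = \rho_{\mathrm{Liz},\alpha}(\V x-\V x_k) - p_{\V x_k}(\V x),$$
with $p_{\V x_k}$ the polynomial correction of degree $\lceil \alpha-d-1\rceil$ constructed there, and summing produces the stated closed-form expansion. The only nontrivial step is the weak* continuity and predual membership of the measurement functionals; this has already been established by Corollary~\ref{cor:OperatorFrac}, so the remainder of the argument is essentially bookkeeping on top of Theorem~\ref{thm:representer}.
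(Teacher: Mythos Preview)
Your proposal is correct and follows essentially the same approach as the paper, which simply states that the corollary follows from applying Theorem~\ref{thm:representer} together with Corollary~\ref{cor:OperatorFrac} (predual membership of the point evaluations) and the explicit representation from Section~\ref{sec:Liz_Frac}. Your write-up merely fills in the bookkeeping the paper leaves implicit; the one small caveat is that your justification for linear independence of the $\nu_m$ is slightly too quick (point evaluations at distinct points are automatically independent on all of $C(\R^d)$, but here you are on the image of $\Op P_{\mathrm{Liz},\alpha}$, so you need that this image separates the $\V x_m$, which follows from the kernel representation), though the paper does not spell this out either.
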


\begin{remark}
	Starting from the chosen representative, we could  replace $P_{\mathrm{Liz},\alpha} \{f\}(\V x_m)$ with $P_{\mathrm{Liz},\alpha} \{f\}(\V x_m) + p(\V x_m)$, where $p\in \Spc P_{\lfloor \alpha -d \rfloor}(\R^d)$, which would result in a minimization over $L_{\infty,\alpha -d}(\R^d)$.
	Hence, we are back to a more classical setting and a similar result holds, see \cite[Thm.\ 3]{Unser2022}.
	Proving the weak*-continuity of the evaluation functional in this extended setting follows along the lines of the Lizorkin-distribution setting.
\end{remark}

\subsection{Radon Splines}\label{sec:RadonReLU}
The native Banach space for interpolation with Radon splines, given some order $m\in \N$, is the space $ \Spc M_{\mathrm{Rad},m}(\R^d)$ introduced in Section~\ref{sec:Liz_Radon}.
Due to the form of the function $\rho_{\mathrm{Rad},m}$, this interpolation problem is closely related to approximations with 2-layer neural networks as pointed out in \cite{Bartolucci2021,Parhi2021}.
Since the point evaluations are in the predual, the application of Theorem~\ref{thm:representer}, together with the explicit representation of elements in $ \Spc M_{\mathrm{Rad},m}(\R^d)$ obtained in Corollary~\ref{cor:OperatorRadon}, yields the following.
	
\begin{theorem} [Minimum-energy Radon splines]
		\label{Theo:RadonSplines}
		Let $E\colon \R \times \R \to \R$ be a strictly convex loss function,
		let $(\V x_i,y_i)\in \R^d \times \R, i=1,\dots,M$, be a set of data points, and let $\lambda>0$ be some regularization parameter.
		For $m \in \N, m \geq 2$, the solution set $S$ of the
		functional optimization problem 
		\begin{align}
			\label{Eq:RadonEnergy}
			\argmin_{f \in \Spc M_{\mathrm{Rad},m}(\R^d) }  \sum_{i=1}^M E\bigl(y_i ,P_{\mathrm{Rad},m}\{f\}(\V x_i)\bigr) +  
			\lambda\|\partial_t^m \Op K_\mathrm{rad}\Op R\{f\}\|_{\Spc M_{m}},
		\end{align}
		is nonempty and weak*-compact. It is the weak* closure of the convex hull of its extreme points, which are all of the form
		\begin{align}
			f_\mathrm{Ext}&= \Op P_{\mathrm{Rad},m}\biggl\{\sum_{k=1}^{K_0} a_k \Op R^* \partial_t^{-m}  \{\delta(\cdot -\V x_k)\}\biggr\} = \sum_{k=1}^{K_0} a_k 
			\bigr(\rho_{\mathrm{Rad},m}(\langle \boldsymbol \xi_k,\cdot \rangle -t_k) - p_{t_k,\boldsymbol \xi_k}\bigr)
			\label{Eq:ExtremesplineLik}
		\end{align}
		for some $K_0\le M$, expansion parameters (weights and adaptive centers) $(a_k,t_k,\boldsymbol \xi_k) \in \R \times \R \times \mathbb S^{d-1}$ for $k=1,\dots,K_0$, $p_{t_k,\V w_k} \in \Spc P_{m-1}(\R^d)$, and the Radon radial-basis function $\rho_{\mathrm{Rad},m}\colon \R \to \R$ defined by \eqref{eq:RadonBasis}. 
	\end{theorem}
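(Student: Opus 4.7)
The plan is to apply the abstract representer Theorem~\ref{thm:representer} with the setup of Section~\ref{sec:Liz_Radon}, namely $\Spc S_1 = \Spc S_{\mathrm{Liz},m}(\R \times \mathbb{S}^{d-1})$, $\Spc S_2 = \Spc S_\mathrm{Liz}(\R^d)$, $\Spc X = C_{0,m}(\R^d)$, and $\Op T = \Op R^\ast \Op K_\mathrm{rad} \partial_t^m$, so that $\Spc X^\prime_{\Op T} = \Spc M_{\mathrm{Rad},m}(\R^d)$ and the norm appearing in~\eqref{Eq:RadonEnergy} matches $\|\cdot\|_{\Spc X^\prime_{\Op T}}$. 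The regularizer $\psi(t) = \lambda t$ is strictly increasing and convex, $E$ is strictly convex by assumption, and Corollary~\ref{cor:OperatorRadon} guarantees that the sampling functionals $\nu_i\colon f \mapsto \Op P_{\mathrm{Rad},m}\{f\}(\V x_i)$ lie in the predual $C_{\mathrm{Rad},m}(\R^d)$. Their linear independence for distinct $\V x_i$ follows from standard bump-function constructions inside $\Spc S_\mathrm{Liz}(\R^d)$.

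Next I would identify the extreme points of the unit ball of $\Spc M_{\mathrm{Rad},m}(\R^d)$. Because $\Op T^{-\ast} = \Op R^\ast \partial_t^{-m}$ is by construction an isometric isomorphism from $\Spc M_m(\R \times \mathbb{S}^{d-1})$ onto $\Spc M_{\mathrm{Rad},m}(\R^d)$, it carries extreme points to extreme points. The extreme points of the unit ball of $\Spc M_m(\R \times \mathbb{S}^{d-1})$ are the parity-symmetrized Dirac masses $e_{(t_0,\V\xi_0)} = \tfrac{1}{2}\bigl(\delta_{(t_0,\V\xi_0)} \pm \delta_{(-t_0,-\V\xi_0)}\bigr)$, with sign determined by $m$; this follows from Proposition~\ref{Prop:ExtremeProj} applied to the parity projector, in direct analogy with the periodic argument in the proof of Theorem~\ref{Theo:ExtremePerio}. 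Proposition~\ref{Theo:Rad1DprofilesNew} then identifies $\Op T^{-\ast}\{e_{(t_0,\V\xi_0)}\}$ with the equivalence class of the ridge $\rho_{\mathrm{Rad},m}(\langle \V\xi_0, \cdot\rangle - t_0)$ in $\Spc S^\prime_{\mathrm{Liz}}(\R^d)$.

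Invoking Theorem~\ref{thm:representer} then yields that $S$ is non-empty, convex and weak*-compact, and that, thanks to the strict convexity of $E$, it coincides with the weak* closure of the convex hull of its own extreme points, each of which admits the representation $\sum_{k=1}^{K_0} a_k \Op T^{-\ast}\{e_{(t_k,\V\xi_k)}\}$ with $K_0 \leq M$. Applying the continuous linear representation operator $\Op P_{\mathrm{Rad},m}$ from Corollary~\ref{cor:OperatorRadon} finally turns this abstract sum into the concrete expansion~\eqref{Eq:ExtremesplineLik}, with the polynomial correction $p_{t_k,\V\xi_k} \in \Spc P_{m-1}(\R^d)$ being precisely the one constructed explicitly in Section~\ref{sec:Liz_Radon}.

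The main obstacle is the bookkeeping attached to the parity constraint: one must check that the predual entering Theorem~\ref{thm:representer} really is the parity-constrained space $C_{\mathrm{Rad},m}(\R^d)$ rather than the full $C_0(\R^d)$, and that the extreme points of the unit ball of the parity-constrained measure space are indeed the symmetrized Dirac masses above. Both facts are mild variants of arguments already present in the paper, but they are the only steps requiring genuine verification; the rest is a direct translation of the abstract framework through the representation operator.
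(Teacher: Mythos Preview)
Your proposal is correct and follows precisely the route the paper takes: the paper's entire justification is the sentence preceding the theorem, namely that point evaluations lie in the predual (Corollary~\ref{cor:OperatorRadon}) so Theorem~\ref{thm:representer} applies with the setup of Section~\ref{sec:Liz_Radon}. You have simply unpacked more of the bookkeeping---the parity-symmetrized extreme points via Proposition~\ref{Prop:ExtremeProj} and the identification through Proposition~\ref{Theo:Rad1DprofilesNew}---that the paper leaves implicit in the general discussion before Theorem~\ref{thm:representer} and in the Example of Appendix~\ref{sec:Projection}.
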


	\begin{remark}
		Starting from the chosen representative, we can also add the minimization over $\Spc P_{m-1}(\R^d)$ to the  problem
		 and replace $\Op P_{\mathrm{Rad},m}\{f\}(\V x_m)$ with $\Op P_{\mathrm{Rad},m}\{f\}(\V x_m) + p(\V x_m)$, where $p\in \Spc P_{m-1}(\R^d)$, which results in a minimization over $L_{\infty,m-1}(\R^d)$.
		As this rules out the dependence on the representation operator $P_{\mathrm{Rad},m}$, we are back in a classical setting and a similar result holds (with $K_0 \leq M-m$), see \cite[Thm.\ 3]{Unser2022}.
		Further, we can also evaluate $\|\partial_t^m \Op K_\mathrm{rad}\Op R\{f\}\|_{\Spc M_{m}}$ in the sense of $\Spc S^\prime(\R^d)$ since $\Spc P_{m-1}(\R^d) \subset \ker \partial_t^m \Op K_\mathrm{rad}\Op R$.
		Compared to previous results in the literature \cite{Bartolucci2021,Parhi2021}, this leads to a stronger characterization of the solution set $S$ together with a nice and elegant proof.
	\end{remark}

\section{Conclusions}\label{sec:Conclusions}
We have shown that continuous projections onto the Lizorkin space cannot exist.
Therefore, we had to resort to projection-free approaches to find representatives of Lizorkin distributions.
Using the property that the space is dense in $C_0(\R^d)$, we have established a framework for finding representatives of distributions that lie in certain Banach subspaces.
To do so, we only require representations of the related Green functions with sufficient regularity. 
Based on the obtained representation operator, we have introduced a powerful variational framework for the study of a wide class of inverse problems.
In particular, this enabled us to strengthen results obtained in prior works.
As future work, we want to apply our framework to the study of other subspaces and related variational models.

\section*{Acknowledgments}
The research leading to these results has received funding from the European Research Council (ERC) under European Union’s Horizon 2020 (H2020), Grant Agreement - Project No 101020573 FunLearn.
Further, the authors want to thank Joachim Krieger and Marc Troyanov for fruitful discussions on the topic and, in particular,  Joachim Kirieger for providing us with a proof of the nonexistence of projections.

\appendix
\section{Fundamental Solutions of the Fractional Laplacian}
Given $\alpha \in \R$ and $d \in \N$ with $\alpha > d$ and $\alpha -d \notin \N$, we want to provide an estimate of the asymptotic behavior of $f_{\alpha,d} \colon \R^d \to \R$ with $f_{\alpha,d}(\V x)=\Vert \V x \Vert^{\alpha-d}$ and of its derivatives.
For this purpose, we need the following lemma.
\begin{lemma}\label{lem:help}
	For any $\V k \in \N^d$, it holds that $\partial^{\V k} \Vert \cdot \Vert = p_{\V k}/\Vert \cdot \Vert^{-1 + 2\vert \V k \vert}$ for some polynomial $p \in  \Spc P (\R^d)$ of order at most $\vert \V k \vert$.
\end{lemma}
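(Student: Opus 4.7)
The plan is to proceed by induction on the order $|\V k|$, writing $\partial^{\V k}\Vert \cdot \Vert$ as a rational function with numerator a polynomial of controlled degree and denominator a fixed power of $\Vert \cdot \Vert$. The base case $|\V k|=0$ is immediate with $p_{\V 0}\equiv 1$, since $\Vert\V x\Vert = 1/\Vert\V x\Vert^{-1}$, matching the exponent $-1+2\cdot 0 = -1$ and the degree bound $\deg p_{\V 0}=0$.

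For the inductive step, assume $\partial^{\V k}\Vert \cdot \Vert = p_{\V k}(\cdot)/\Vert \cdot \Vert^{2|\V k|-1}$ with $\deg p_{\V k}\le |\V k|$. I would then apply $\partial_{x_j}$ for an arbitrary $j\in\{1,\dots,d\}$ on $\R^d\setminus\{\V 0\}$, using the elementary identity $\partial_{x_j}\Vert \V x\Vert = x_j/\Vert \V x\Vert$, together with the quotient rule. This yields
\begin{equation}
\partial^{\V k+\V e_j}\Vert \V x\Vert = \frac{(\partial_{x_j}p_{\V k})(\V x)\,\Vert \V x\Vert^{2}-(2|\V k|-1)\,x_j\,p_{\V k}(\V x)}{\Vert \V x\Vert^{2|\V k|+1}}.
\end{equation}
Since $\Vert \V x\Vert^{2}=x_1^{2}+\cdots+x_d^{2}\in \Spc P(\R^d)$, I would define the new numerator polynomial $p_{\V k+\V e_j}$ as the numerator above, which is manifestly polynomial in $\V x$.

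The remaining bookkeeping is routine: $\deg(\partial_{x_j}p_{\V k})\le |\V k|-1$, so the first summand in the numerator has degree at most $|\V k|+1$, and the second summand, being $x_j\,p_{\V k}$ up to a constant, likewise has degree at most $|\V k|+1=|\V k+\V e_j|$. The denominator exponent is $2|\V k|+1=2|\V k+\V e_j|-1$, matching the claimed form. I do not anticipate any genuine obstacle here; the argument is a direct induction, and the only point requiring slight care is keeping track of the denominator exponent so that the recursion lines up with $-1+2|\V k|$ at each step.
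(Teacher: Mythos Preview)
Your proof is correct and follows essentially the same inductive argument as the paper: both proceed by induction on $|\V k|$, apply the quotient rule together with $\partial_{x_j}\Vert\V x\Vert = x_j/\Vert\V x\Vert$, and factor a power of $\Vert\V x\Vert$ from the numerator to recover the claimed form. Your version is in fact slightly more explicit than the paper's about verifying the degree bound on $p_{\V k+\V e_j}$, which the paper leaves implicit.
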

\begin{proof}
	We proceed by induction. For $\V k=\mathbf 0$ the result is obviously true.
	Assume that the claim holds for any $\V k \in \N^d$ with $\vert \V k \vert \leq n$ and let $\V k \in \N^d$ with $\vert \V k \vert = n + 1$.
	For simplicity of notation, we assume that the derivative w.r.t.\ $\V x_1$ is included and define $\tilde{\V k} = \V k - \V e_1$.
	The induction assumption implies that
	\begin{align}
		\partial^{\V k} \Vert \V x \Vert &= \partial_{\V x_k}\partial^{\tilde{\V k}} \Vert \V x \Vert  = \partial_{\V x_k} \frac{ p_{\tilde{\V k}}(\V x)}{\Vert \V x \Vert^{-1 + 2\vert \tilde{\V k} \vert}} = \frac{ \partial_{\V x_k}  p_{\tilde{\V k}}(\V x)\Vert \V x \Vert^{-1 + 2\vert \tilde{\V k} \vert} - p_{\tilde{\V k} }(\V x)\V x_k \Vert \V x \Vert^{-3 + 2\vert \tilde{\V k} \vert}}{\Vert \V x \Vert^{(2(-1 + 2\vert \tilde{\V k} \vert)}}\notag\\
		&= \frac{ p_{\V k}(\V x)\Vert\V x \Vert^{-3 + 2\vert \tilde{\V k} \vert}}{\Vert \V x \Vert^{(2(-1 + 2\vert \tilde{\V k} \vert)}} = \frac{ p_{\V k}(\V x)}{\Vert \V x \Vert^{-1 + 2\vert {\V k} \vert}},
	\end{align}
	which concludes the proof.
\end{proof}
Lemma~\ref{lem:help} is going to let us prove the actual result.
\begin{proposition}\label{eq:FundGrowth}
	For any $\V k \in \N^d$ with $\vert \V k\vert \leq \lceil \alpha -d \rceil$ and $x \neq 0$, it holds that $\vert \partial^{\V k} f_{\alpha,d}(\V x) \vert \leq C \Vert \V x \Vert^{\alpha -d - \vert \V k \vert}$ .
\end{proposition}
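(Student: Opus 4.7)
The plan is to exploit the positive homogeneity of $f_{\alpha,d}$, which satisfies $f_{\alpha,d}(\lambda \V x) = \lambda^{\alpha-d} f_{\alpha,d}(\V x)$ for every $\lambda>0$ and $\V x \in \R^d$. First, I would observe that $f_{\alpha,d}$ is smooth on $\R^d \setminus \{\V 0\}$, being the composition of $\V x \mapsto \Vert \V x \Vert^2$ with the smooth map $u \mapsto u^{(\alpha-d)/2}$ on $(0,\infty)$; in particular, each partial derivative $\partial^{\V k} f_{\alpha,d}$ is continuous on $\R^d \setminus \{\V 0\}$.

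Second, I would differentiate the homogeneity identity $|\V k|$ times in $\V x$ via the chain rule to obtain
\begin{equation}
\lambda^{|\V k|} (\partial^{\V k} f_{\alpha,d})(\lambda \V x) = \lambda^{\alpha-d}\,(\partial^{\V k} f_{\alpha,d})(\V x),
\end{equation}
which says that $\partial^{\V k} f_{\alpha,d}$ is itself positively homogeneous of degree $\alpha-d-|\V k|$. Specializing $\lambda = 1/\Vert \V x \Vert$ then yields
\begin{equation}
|\partial^{\V k} f_{\alpha,d}(\V x)| = \Vert \V x\Vert^{\alpha-d-|\V k|}\, |\partial^{\V k} f_{\alpha,d}(\V x/\Vert \V x\Vert)|
\end{equation}
for every $\V x \neq \V 0$. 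Since the sphere $\mathbb{S}^{d-1}$ is compact and $\partial^{\V k} f_{\alpha,d}$ is continuous there, the constant $C \coloneqq \sup_{\V \xi \in \mathbb{S}^{d-1}} |\partial^{\V k} f_{\alpha,d}(\V \xi)|$ is finite, which gives the claim.

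An alternative approach, closer in spirit to the preceding lemma, would be an induction on $|\V k|$ showing that $\partial^{\V k} f_{\alpha,d}(\V x) = \Vert \V x\Vert^{\alpha-d-2|\V k|} q_{\V k}(\V x)$, where $q_{\V k}$ is a polynomial homogeneous of degree $|\V k|$. The base case $\V k = \V 0$ is trivial, and the induction step uses $\partial_j \Vert \V x\Vert^\beta = \beta x_j \Vert \V x\Vert^{\beta-2}$ together with the product rule: the fact that the extra factor acquires polynomial degree exactly one while the $\Vert \V x\Vert$-exponent drops by exactly two preserves the claimed homogeneous structure. Bounding $|q_{\V k}(\V x)| \le C \Vert \V x\Vert^{|\V k|}$ by compactness on the sphere then delivers the same estimate.

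No serious obstacle is expected; the crux is simply recognising that homogeneity is the operative structural fact. The hypothesis $|\V k| \leq \lceil \alpha - d \rceil$ is not used in the argument --- the bound actually holds for all multi-indices --- and is stated only because this is the range required in Section~\ref{sec:Liz_Frac}.
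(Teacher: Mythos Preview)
Your proof is correct, but it takes a genuinely different route from the paper. The paper argues by induction on $\lceil \alpha-d\rceil$: it writes $\partial_{x_1} f_{\alpha,d}(\V x)=C\, f_{\alpha-1,d}(\V x)\,\partial_{x_1}\|\V x\|$, then applies the Leibniz rule and the inductive hypothesis to $f_{\alpha-1,d}$ together with the preceding lemma (controlling $\partial^{\V k}\|\cdot\|$) to bound each term. Your homogeneity argument bypasses all of this: you differentiate the scaling identity once and for all, reduce to the sphere, and invoke compactness. This is more elementary, avoids the auxiliary lemma entirely, and --- as you observe --- does not use the restriction $|\V k|\le\lceil\alpha-d\rceil$, which in the paper's proof is what makes the induction close. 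Your second sketch (induction showing $\partial^{\V k} f_{\alpha,d}(\V x)=\|\V x\|^{\alpha-d-2|\V k|}q_{\V k}(\V x)$ with $q_{\V k}$ homogeneous of degree $|\V k|$) is closer in spirit to the paper's lemma but still cleaner than the paper's actual proof, since it inducts on $|\V k|$ rather than on $\lceil\alpha-d\rceil$.
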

\begin{proof}
	We proceed inducetively over $\lceil \alpha -d \rceil$.
	For $\lceil \alpha -d \rceil = 1$, Lemma~\ref{lem:help} implies that
	\begin{equation}
		\vert \partial_{\V x_k}  f_{\alpha,d}(\V x) \vert =  \vert (\alpha -d -1)f_{\alpha-1,d}(\V x) \partial_{\V x_k} \Vert \V x \Vert \vert \leq C \Vert \V x \Vert^{\alpha -d - 1}.
	\end{equation}
	If $k= 0$, there is nothing to show.
	Assume now that the results holds for $\lceil \alpha -d \rceil = n$ and let $\alpha, d$ be such that $\lceil \alpha -d \rceil = n+1$.
	Like in the proof of Lemma~\ref{lem:help}, we assume again that the derivative w.r.t.\ $\V x_1$ is included and define $\tilde{\V k} = \V k - \V e_1$.
	Then, using the Leibniz rule, we provide the estimate
	\begin{align}
		\vert \partial^{\V k}  f_{\alpha,d}(\V x) \vert  &= \vert \partial^{\tilde{\V k}} \partial_{\V x_1}  f_{\alpha,d}(\V x) \vert \leq \! C \bigl\vert \partial^{\tilde{\V k}} \bigl( f_{\alpha-1,d}(\V x) \partial_{\V x_1} \Vert \V x \Vert \bigr) \bigr \vert \leq \! C \sum_{\V i < \tilde{\V k}} \bigl\vert \partial^{\tilde{\V k} - \V i} f_{\alpha-1,d}(\V x) \partial^{\V i + \V e_1} \Vert \V x \Vert \bigr \vert\notag\\
		&\leq C \sum_{\V i < \tilde{\V k}} \Vert \V x \Vert^{\alpha - 1 - d - \vert \tilde{\V k} - \V i \vert} \Vert \V x \Vert^{- \vert \V i \vert} \leq  C \Vert \V x \Vert^{\alpha -d - \vert \V k \vert},
	\end{align}
	which concludes the proof.
\end{proof}

\section{Radon Transform}\label{sec:Radon}
Here, we recall some important properties of the Radon transform, for which an extensive overview is given in \cite{Helgason1999}.
The Radon transform is first described for Lizorkin functions and then extended to distributions by duality.
\paragraph{Classical Integral Formulation}
The Radon transform of $f\in L_1(\R^d)$ is defined as
\begin{align}
	\Op R\{ f\}(t, \boldsymbol \xi)
	&=\int_{\R^d}\delta(t-\boldsymbol \xi^\Top\V x)  f(\V x) \dint \V x,\quad (t,\boldsymbol \xi) \in \R \times \mathbb{S}^{d-1}. \label{Eq:Radon2}
\end{align}
Its adjoint is the back-projection $\Op R^\ast$, whose action on $g\colon \R \times \mathbb{S}^{d-1} \to \R$ is defined as
\begin{align}
	\Op R^\ast \{g\}(\V x)=\int_{\mathbb{S}^{d-1}} g(\underbrace{\boldsymbol \xi^\Top\V x}_{t}, \boldsymbol \xi)\dint \boldsymbol \xi, \quad\V x\in \R^d.
	\label{Eq:Backprojection}
\end{align}
Given the Fourier transform $\widehat f\coloneqq \Fourier\{f\}$ 
of  $f \in L_1(\R^d)$, we can calculate $ \Op R \{f\}(\cdot,\boldsymbol \xi_0)$ at given  $\boldsymbol \xi_0 \in \mathbb{S}^{d-1}$ through the relation
\begin{align}
	\Op R\{ f\}(t, \boldsymbol \xi_0)=\frac{1}{2 \pi} \int_{\R} \widehat f(\omega\boldsymbol \xi_0) \ee^{\jj \omega t} \dint \omega= \Fourier^{-1}\{  \widehat f(\cdot\boldsymbol \xi_0) \}(t),
	\label{Eq:CentralSliceTheo}
\end{align}
a property that is referred to as the {\em Fourier-slice theorem}.
The key property for analysis purposes is that the Radon transform is continuous and invertible if the spaces are chosen properly, see \cite{Helgason1965,Helgason1999,Ludwig1966} for details.
\begin{theorem}[Continuity and invertibility of the Radon transform on $\Spc S_\mathrm{Liz}(\R^d)$]
	\label{Theo:RadonS0}
	The Radon operators $\Op R \colon \Spc S_\mathrm{Liz}(\R^d) \to \Spc S_\mathrm{Liz,0}(\R \times \mathbb{S}^{d-1})$ and $\Op R^* \colon \Spc S_\mathrm{Liz,0}(\R \times \mathbb{S}^{d-1}) \to \Spc S_\mathrm{Liz}(\R^d)$ are bijective and continuous.
	Moreover, $\Op R^\ast \Op K_\mathrm{rad} \Op R=\Op K \Op R^\ast \Op R=\Op R^\ast \Op R\Op K =\Identity \mbox{ on }\Spc S_\mathrm{Liz}(\R^d)$ and $\Op K_\mathrm{rad} \Op R \Op R^\ast=\Identity \mbox{ on }\Spc S_{\mathrm{Liz},0}(\R \times \mathbb{S}^{d-1})$, where $\Op K=(\Op R^\ast \Op R)^{-1}=c_d(-\Delta)^{(d-1)/2}$ with $c_d=(2(2\pi)^{d-1})^{-1}$ is the so-called ``filtering'' operator 
	and where $ \Op K_\mathrm{rad}$ is an one-dimensional radial counterpart that acts along the Radon-domain variable $t$.
	These filtering operators are characterized by their frequency response 
	$\widehat K(\bw)=c_d\|\bw\|^{d-1}$ and
	$\widehat K_\mathrm{rad}(\omega)=c_d |\omega|^{d-1}$. \end{theorem}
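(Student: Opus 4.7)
\medskip

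\noindent\textbf{Proof proposal.} The plan is to pull everything back to the Fourier domain via the Fourier slice theorem \eqref{eq:CentralSliceTheo}, where the Lizorkin condition becomes the clean algebraic statement that $\widehat\varphi$ vanishes to infinite order at the origin (cf.\ \eqref{eq:a}). Concretely, for $\varphi \in \Spc S_\mathrm{Liz}(\R^d)$ and $(t,\V\xi) \in \R\times\mathbb S^{d-1}$, set $\psi(t,\V\xi) \coloneqq \Op R\{\varphi\}(t,\V\xi)$, so that $\widehat\psi(\omega,\V\xi) = \widehat\varphi(\omega\V\xi)$ as a function of $\omega \in \R$ for each fixed $\V\xi$. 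Because $\widehat\varphi \in \widehat{\Spc S}_\mathrm{Liz}(\R^d)$ vanishes to all orders at $\V 0$, the composition $\omega \mapsto \widehat\varphi(\omega\V\xi)$ is Schwartz in $\omega$ and has all $\omega$-derivatives at $\omega=0$ equal to $0$, uniformly and smoothly in $\V\xi$; inverse-Fourier-transforming in $\omega$ then places $\psi$ in $\Spc S_{\mathrm{Liz},0}(\R\times\mathbb S^{d-1})$. Continuity of $\Op R$ on these Fr\'echet spaces follows from the continuity of the Fourier transform together with the smoothness of the radial pullback.

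For bijectivity I would exhibit a two-sided inverse. The natural candidate is $\Op R^\ast \Op K_\mathrm{rad}$, which at the Fourier level amounts to reconstructing $\widehat\varphi(\bw)$ from $\widehat\psi(\omega,\V\xi)$ via $\widehat\varphi(\bw) = \widehat\psi(\|\bw\|,\bw/\|\bw\|)$ (with the factor $c_d|\omega|^{d-1}$ absorbing the Jacobian from polar coordinates and handling the $\pm\V\xi$ symmetry built into the even/odd subspaces). The verification that this inverse maps $\Spc S_{\mathrm{Liz},0}(\R\times\mathbb S^{d-1})$ back into $\Spc S_\mathrm{Liz}(\R^d)$ is where the Lizorkin hypothesis earns its keep: the polar pullback of an arbitrary Schwartz function on $\R\times\mathbb S^{d-1}$ is generally only continuous (not smooth) at $\V 0 \in \R^d$, but the assumption that $\widehat\psi(\cdot,\V\xi)$ vanishes to infinite order at $\omega=0$ uniformly in $\V\xi$ forces the pullback to be $C^\infty$ across the origin, with all derivatives $0$ there; rapid decay in $\bw$ is immediate from rapid decay of $\widehat\psi$ in $\omega$. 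This smoothness-at-the-origin argument is the main technical obstacle, and the cleanest route is to expand $\widehat\psi(\omega,\V\xi)$ as an asymptotic series at $\omega=0$ (which vanishes identically) and bound the remainders, uniformly on the compact sphere.

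With continuity and bijectivity of $\Op R$ in hand, the continuity of $\Op R^\ast\colon \Spc S_{\mathrm{Liz},0}(\R\times\mathbb S^{d-1}) \to \Spc S_\mathrm{Liz}(\R^d)$ can be read off from the same Fourier-domain polar-pullback argument (up to the filter $\Op K_\mathrm{rad}$), and its bijectivity follows symmetrically. Finally, the operator identities $\Op R^\ast \Op K_\mathrm{rad}\Op R = \Op K\Op R^\ast\Op R = \Op R^\ast\Op R\,\Op K = \Identity$ on $\Spc S_\mathrm{Liz}(\R^d)$ and $\Op K_\mathrm{rad}\Op R\,\Op R^\ast = \Identity$ on $\Spc S_{\mathrm{Liz},0}(\R\times\mathbb S^{d-1})$ reduce, under the Fourier slice correspondence, to the elementary polar-coordinate identity
\begin{equation}
\int_{\mathbb S^{d-1}} F(\V\xi)\,\dint\V\xi \;=\; \|\bw\|^{-(d-1)} \cdot \|\bw\|^{d-1} \int_{\mathbb S^{d-1}} F(\V\xi)\,\dint\V\xi,
\end{equation}
combined with the definitions $\widehat K(\bw) = c_d\|\bw\|^{d-1}$ and $\widehat K_\mathrm{rad}(\omega) = c_d|\omega|^{d-1}$; the commutativity $\Op K\Op R^\ast\Op R = \Op R^\ast\Op R\,\Op K$ is automatic since both operators are Fourier multipliers. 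I would handle these identities last, as a short Fourier-side verification, once the functional-analytic framework is in place.
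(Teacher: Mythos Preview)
The paper does not actually prove this theorem: it is stated in the appendix as a known result, with the line ``see \cite{Helgason1965,Helgason1999,Ludwig1966} for details'' standing in for the argument. So there is no in-paper proof to compare against.

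That said, your outline is the standard route taken in those references: pass to the Fourier side via the slice theorem, observe that the Lizorkin condition is exactly what is needed to make the polar pullback $\bw\mapsto\widehat\psi(\|\bw\|,\bw/\|\bw\|)$ smooth across the origin, and read off the filtering identities as Fourier-multiplier statements. You have correctly identified the one genuine technical point (smoothness at the origin of the pullback) and sketched the right mechanism for handling it. One small quibble: your displayed ``polar-coordinate identity'' is a tautology as written; what you actually want is the computation $\widehat{\Op R^\ast\Op R\{\varphi\}}(\bw)=c_d^{-1}\|\bw\|^{-(d-1)}\widehat\varphi(\bw)$, which comes from writing the $\R^d$-Fourier inversion in polar coordinates and using the slice theorem, and from which $\Op K\Op R^\ast\Op R=\Identity$ is immediate.
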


As evidenced in \eqref{Eq:GreenLap}, the impulse response of the filtering operator $\Op K$ in Theorem~\ref{Theo:RadonS0} is proportional to $k_{-d+1,d}$,  which tells us that it asymptotically decays like $1/\|\V x\|^{2d -1}$ when $d$ is even, or is a power of the Laplacian (local operator) otherwise.
Further, we note that Theorem \ref{Theo:RadonS0} implies that $\Op R$ is actually a homeomorphism.

\paragraph{Distributional Extension}
\label{Sec:RadonDistributions}
This framework is extended to distributions by duality.
\begin{definition}
	\label{Def:GeneralizedRadon}
	The distribution $g=\Op R\{ f \}
	\in \Spc S_{\mathrm{Liz},0}'(\R \times \mathbb S^{d-1})$ is the  Radon transform of $f \in \Spc S_\mathrm{Liz}'(\R^d)$ if
	\begin{krad}
		\begin{align}
			\forall \phi \in \Spc S_{\mathrm{Liz},0}(\R \times \mathbb S^{d-1}):\quad \langle g,\phi \rangle_\mathrm{Rad}
			=\langle f, \Op R^\ast \{\phi\} \rangle.
			\label{Eq:RDist}
		\end{align}
	\end{krad}
	\begin{krad}
		Likewise, $\tilde g=\Op K\Op R\{ f \} \in \Spc S_{\mathrm{Liz},0}'(\R \times \mathbb S^{d-1})$ is the filtered projection  of $f \in \Spc S_\mathrm{Liz}'(\R^d)$ if 
		\begin{align}
			\forall \phi \in \Spc S_{\mathrm{Liz},0}(\R \times \mathbb S^{d-1}): \quad \langle \tilde g,\phi \rangle_\mathrm{Rad}=\langle f, \Op R^\ast\Op K_\mathrm{rad} \{\phi\} \rangle.
			\label{Eq:KRDist}
		\end{align}
	\end{krad}
	Finally, the backprojection $f=\Op R^\ast \{g\} \in \Spc S_{\mathrm Liz}'(\R^d)$ of $g\in \Spc S_{\mathrm{Liz},0}'(\R \times   \mathbb{S}^{d-1})$ is defined via
	\begin{align}
		\forall \varphi \in \Spc S_\mathrm{Liz}(\R^d): \quad \langle \Op R^\ast\{ g\}, \varphi \rangle
		=\langle g, \Op R \{\varphi\}\rangle_\mathrm{Rad}. \label{Eq:RadjDis}
	\end{align}
\end{definition}

Due to duality,the distributional extension of the Radon transform inherits most of the properties of the ``classical'' operator defined by \eqref{Eq:Radon2}. 
\begin{theorem}[Invertibility of the Radon transform on $\Spc S_\mathrm{Liz}'(\R^d)$]
	\label{Theo:InvertRadonDist}
	It holds that $\Op R^\ast \Op K_\mathrm{rad} \Op R=\Op K \Op R^\ast \Op R
	=\Identity$ on $\Spc S_{\mathrm{Liz}}'(\R^d)$. 
	Hence, the ``filtered-projection'' operator $\Op K_\mathrm{rad}\Op R\colon \Spc S_\mathrm{Liz}'(\R^d) \to \Spc S'_{\mathrm{Liz},0}(\R^d)$ is a homeomorphism with inverse $\Op R^\ast\colon \Spc S'_{\mathrm{Liz},0}(\R^d) \to \Spc S_\mathrm{Liz}'(\R^d)$.
\end{theorem}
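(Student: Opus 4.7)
The plan is to transfer the test-function identities of Theorem~\ref{Theo:RadonS0} to the distributional setting by pure duality, using the adjoint pairings introduced in Definition~\ref{Def:GeneralizedRadon}. There is no need for any new hard analysis here: once the test-function version is in hand, the distributional version should follow mechanically from the fact that $\Spc S_{\mathrm{Liz}}(\R^d)$ and $\Spc S_{\mathrm{Liz},0}(\R \times \mathbb{S}^{d-1})$ are reflexive (in fact nuclear Montel) Fréchet spaces.

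First I would check that each of the three operators $\Op R$, $\Op R^*$, and $\Op K_\mathrm{rad}$ is well-defined as a continuous map between the appropriate distribution spaces. Since the test-function $\Op R$ and $\Op R^*$ are mutually adjoint continuous bijections between $\Spc S_{\mathrm{Liz}}(\R^d)$ and $\Spc S_{\mathrm{Liz},0}(\R \times \mathbb S^{d-1})$, their distributional extensions defined by \eqref{Eq:RDist} and \eqref{Eq:RadjDis} are automatically weak*-continuous between the corresponding duals; because the spaces are Montel, weak* continuity coincides with strong continuity. The operator $\Op K_\mathrm{rad}$ is a one-dimensional Fourier multiplier with the real, even symbol $c_d|\omega|^{d-1}$, so it is formally self-adjoint on the Radon-domain Lizorkin space and extends continuously to $\Spc S'_{\mathrm{Liz},0}(\R \times \mathbb S^{d-1})$.

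Next I would establish the identity $\Op R^\ast \Op K_\mathrm{rad} \Op R = \Identity$ on $\Spc S'_{\mathrm{Liz}}(\R^d)$ by testing against $\varphi \in \Spc S_{\mathrm{Liz}}(\R^d)$ and unwinding the definitions:
\begin{align}
\langle \Op R^\ast \Op K_\mathrm{rad} \Op R\{f\}, \varphi\rangle = \langle \Op K_\mathrm{rad} \Op R\{f\}, \Op R\{\varphi\}\rangle_\mathrm{Rad} = \langle \Op R\{f\}, \Op K_\mathrm{rad}\Op R\{\varphi\}\rangle_\mathrm{Rad} = \langle f, \Op R^\ast \Op K_\mathrm{rad}\Op R\{\varphi\}\rangle = \langle f, \varphi\rangle,
\end{align}
where the last equality is Theorem~\ref{Theo:RadonS0}. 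The identity $\Op K\Op R^\ast \Op R = \Identity$ is obtained by the same computation (using that $\Op K$ is self-adjoint as a real Fourier multiplier), and the dual statement $\Op K_\mathrm{rad}\Op R \Op R^\ast = \Identity$ on $\Spc S'_{\mathrm{Liz},0}(\R \times \mathbb S^{d-1})$ follows symmetrically from the corresponding test-function identity.

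Finally, the homeomorphism claim is immediate: the computations above show that $\Op K_\mathrm{rad}\Op R$ and $\Op R^\ast$ are two-sided inverses of each other between $\Spc S_{\mathrm{Liz}}'(\R^d)$ and $\Spc S_{\mathrm{Liz},0}'(\R \times \mathbb S^{d-1})$, and both are continuous by the first step. The only item requiring care, rather than a genuine obstacle, is the bookkeeping of the even/odd and zero-moment subspaces so that each adjoint pairing lands in the space claimed; this is already encoded in the test-function statement of Theorem~\ref{Theo:RadonS0} and transfers without change to the dual.
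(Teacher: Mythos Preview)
Your proposal is correct and matches the paper's approach: the paper does not give an explicit proof of this theorem but simply states that ``the distributional extension of the Radon transform inherits most of the properties of the `classical' operator \ldots\ due to duality,'' which is precisely the mechanism you spell out. Your chain of equalities unwinding Definitions~\eqref{Eq:RDist}--\eqref{Eq:RadjDis} and invoking Theorem~\ref{Theo:RadonS0} on the test-function side is the intended argument.
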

The Fourier-slice theorem expressed by \eqref{Eq:CentralSliceTheo} yields a unique (Fourier-based) characterization of $\Op R \{f\}$. 
It
remains valid for tempered distributions whose generalized Fourier transforms can be identified as continuous functions of $\bw$. It is especially helpful 
when the underlying function or distribution is isotropic. 

An isotropic function $\rho_\mathrm {iso}\colon \R^d \to \R$ is characterized 
by its radial profile $\rho\colon \R_{\ge0} \to \R$, so that
$\rho_\mathrm {iso}(\V x)=\rho(\|\bx\|)$. 
The frequency-domain counterpart of this characterization is 
$\widehat \rho_\mathrm {iso}(\bw)=\widehat \rho_\mathrm{rad}(\|\bw\|)$ with radial frequency profile
\begin{equation}
	\widehat \rho_\mathrm{rad}(\omega)= \frac{(2\pi)^{d/2}}{|\omega|^{d/2-1} }\int_{0}^{+\infty} \rho(t) t^{d/2-1} J_{d/2-1}(\omega t) t \dint t,
\end{equation}
where $J_{\nu}$ is the Bessel function of the first kind of order $\nu$.
In Proposition \ref{prop:IsoLiz}, we characterize isotropic Lizorkin functions.
\begin{proposition}\label{prop:IsoLiz}
	Let $\varphi_\mathrm{iso} \in \Spc S(\R^d)$ be an isotropic test function.
	Then, $\varphi_\mathrm{iso} \in \Spc S_\mathrm{Liz}(\R^d)$ if and only if $\varphi_\mathrm{rad}(t) = \Op R\{\varphi_\mathrm{iso}\}(t,\boldsymbol \xi) \in \Spc S_\mathrm{Liz}(\R)$.
	
\end{proposition}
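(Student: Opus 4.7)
My plan is to translate both conditions to the Fourier side, where Lizorkin membership becomes the vanishing of all partial derivatives at the origin, and then exploit the Fourier slice theorem to convert between multi-index derivatives of $\widehat{\varphi}_\mathrm{iso}$ and ordinary derivatives of $\widehat{\varphi}_\mathrm{rad}$.

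First, I would record two preliminaries. Since $\varphi_\mathrm{iso} \in \Spc S(\R^d)$, its Radon transform $\varphi_\mathrm{rad}$ lies in $\Spc S(\R)$ (it does not depend on $\V\xi$ by isotropy, and one-variable restrictions of $\Op R\{\Spc S(\R^d)\} \subset \Spc S(\R \times \mathbb S^{d-1})$ inherit Schwartz regularity). Next, by \eqref{eq:a} it suffices to prove that $\partial^{\V k} \widehat{\varphi}_\mathrm{iso}(\V 0) = 0$ for all $\V k \in \N^d$ if and only if $\widehat{\varphi}_\mathrm{rad}^{(k)}(0) = 0$ for all $k \in \N$. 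The Fourier slice theorem \eqref{Eq:CentralSliceTheo} gives, for every $\V\xi \in \mathbb S^{d-1}$ and every $\omega \in \R$,
\begin{equation}
\widehat{\varphi}_\mathrm{rad}(\omega) = \widehat{\varphi}_\mathrm{iso}(\omega \V\xi),
\end{equation}
which is the key identity driving both implications.

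Differentiating this identity $k$ times in $\omega$ at $\omega = 0$ via the multivariate chain rule yields
\begin{equation}
\widehat{\varphi}_\mathrm{rad}^{(k)}(0) = \sum_{|\V\alpha| = k} \frac{k!}{\V\alpha!}\, \V\xi^{\V\alpha}\, \partial^{\V\alpha} \widehat{\varphi}_\mathrm{iso}(\V 0) \eqqcolon Q_k(\V\xi),
\end{equation}
where $Q_k$ is a homogeneous polynomial of degree $k$ in $\V\xi$. The forward direction ($\Rightarrow$) is then immediate: if all partial derivatives of $\widehat{\varphi}_\mathrm{iso}$ vanish at the origin, every coefficient of $Q_k$ vanishes, so $\widehat{\varphi}_\mathrm{rad}^{(k)}(0) = 0$ for all $k$.

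For the converse ($\Leftarrow$), the assumption gives $Q_k(\V\xi) = 0$ for every $\V\xi \in \mathbb S^{d-1}$ and every $k$. Since $Q_k$ is homogeneous of degree $k$, the relation $Q_k(\lambda \V\xi) = \lambda^k Q_k(\V\xi)$ shows that $Q_k$ vanishes on all of $\R^d$, so every coefficient $\partial^{\V\alpha} \widehat{\varphi}_\mathrm{iso}(\V 0)$ with $|\V\alpha| = k$ must be zero; running this over all $k$ concludes the proof. The only step that requires a little care is verifying that the Fourier slice identity holds pointwise in the Schwartz setting and that $\varphi_\mathrm{rad} \in \Spc S(\R)$, but both are standard facts subsumed by Theorem~\ref{Theo:RadonS0}. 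The rest is the elementary but decisive observation that a homogeneous polynomial vanishing on the sphere vanishes identically.
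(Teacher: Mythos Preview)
Your proof is correct and follows the same Fourier-slice strategy as the paper: both translate Lizorkin membership to vanishing of (partial) derivatives of the Fourier transform at the origin and use $\widehat{\varphi}_\mathrm{rad}(\omega)=\widehat{\varphi}_\mathrm{iso}(\omega\V\xi)$ to pass between the two. The only difference is in the converse direction: the paper invokes isotropy of $\widehat{\varphi}_\mathrm{iso}$ to identify each moment $c_{\V k}$ directly with $c_{|\V k|}$, whereas you keep the $\V\xi$-dependence and observe that the homogeneous polynomial $Q_k$ vanishes on $\mathbb S^{d-1}$, hence identically, forcing all coefficients $\partial^{\V\alpha}\widehat{\varphi}_\mathrm{iso}(\V 0)$ to be zero. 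Your route is slightly more explicit and avoids the (not literally true for mixed indices) equality $\partial^{\V k}\widehat{\varphi}_\mathrm{iso}(\V 0)=\widehat{\varphi}_\mathrm{rad}^{(|\V k|)}(0)$ that the paper's shorthand suggests, while the paper's version is terser once one accepts that a smooth radial function's full jet at the origin is determined by its restriction to a single ray.
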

\begin{proof}
	Since $\varphi_\mathrm{iso}$ is isotropic, for any $\V k \in \N^d$, we have that
	\begin{align}
		c_\V k=\langle \V x^{\bk},\varphi_\mathrm{iso} \rangle&=\jj^{|\V k|}\partial^{\V k}\widehat \varphi_\mathrm{iso}(\V 0)= \jj^{k}\Dop^{k}\{\widehat \varphi_\mathrm{rad}\}(0)=
		c_k \mbox{ with } k=|\V k|.\
	\end{align}
	The last equality also implies that $c_k=\int_{\R} \varphi_\mathrm{rad}(t) t^k \dint t$, where $\varphi_\mathrm{rad}=\Fourier^{-1}\{\widehat \varphi_\mathrm{rad}\} (\cdot,\boldsymbol \xi) = \Op R\{\varphi\}(\cdot,\boldsymbol \xi)$ is the radial profile (by the Fourier-slice theorem). 
	This shows that,  indeed,
	\begin{equation}
		\varphi_\mathrm{rad} \in \Spc S_\mathrm{Liz}(\R)\Leftrightarrow \varphi_\mathrm{iso}\in \Spc S_\mathrm{Liz}(\R^d).
	\end{equation}
\end{proof}
Finally, we provide a result on how to compute the Radon transform of isotropic Lizorkin distributions.
\begin{proposition} [Radon transform of isotropic distributions] 
	\label{Prop:IsoRad}
	Let $\rho_\mathrm{iso}$ be an isotropic distribution whose radial frequency profile is $\widehat \rho_\mathrm{rad}(\omega)$. Then,
	\begin{align}
		\Op R\{\rho_\mathrm{iso}(\cdot -\V x_0)\}(t,\boldsymbol \xi)&=\rho_\mathrm{rad}(t-\boldsymbol \xi^\Top \V x_0) \\
		\Op K_\mathrm{rad}\Op R\{ \rho_\mathrm{iso}(\cdot -\V x_0)\}(t,\boldsymbol \xi\}&=\tilde \rho_\mathrm{rad}(t-\boldsymbol \xi^\Top \V x_0)\\
		\Op R\{\partial^{\V m}\rho_\mathrm{iso}\}(t,\boldsymbol \xi)&=\boldsymbol \xi^{\V m} \Op D^{|\V m|}\{\rho_\mathrm{rad}\}(t)
		\label{Eq:RadDeriv}
	\end{align} 
	with $\rho_\mathrm{rad}(t)=\Fourier^{-1}\{\widehat \rho_\mathrm{rad}\}(t)$ and 
	$\tilde\rho_\mathrm{rad}(t)=\tfrac{1}{2(2\pi)^{d-1}}\Fourier^{-1}\{|\cdot|^{d-1}\widehat\rho_\mathrm{rad}\}(t)$. 
	
\end{proposition}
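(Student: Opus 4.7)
The plan is to derive all three identities from the Fourier slice theorem \eqref{Eq:CentralSliceTheo} combined with the frequency-domain characterizations of translation, of the filter $\Op K_\mathrm{rad}$, and of the derivative, respectively. Throughout, I would interpret the identities in $\Spc S'_{\mathrm{Liz},0}(\R\times\mathbb{S}^{d-1})$ via the distributional definitions in Definition~\ref{Def:GeneralizedRadon}, so that validity on the Fourier side is enough. The key observation is that $\widehat{\rho_\mathrm{iso}}$ is radial, hence $\widehat{\rho_\mathrm{iso}}(\omega \V\xi)=\widehat\rho_\mathrm{rad}(|\omega|)=\widehat\rho_\mathrm{rad}(\omega)$ (the second equality being the evenness of $\widehat\rho_\mathrm{rad}$ for an isotropic $\rho_\mathrm{iso}$), which makes the 1D slice along any direction $\V\xi$ well defined.

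For the first identity, I would compute the 1D Fourier transform of $\Op R\{\rho_\mathrm{iso}(\cdot-\V x_0)\}(\cdot,\V\xi)$ using \eqref{Eq:CentralSliceTheo}. Since translation by $\V x_0$ corresponds to multiplication by $\ee^{-\jj\langle\bw,\V x_0\rangle}$ in frequency, the slice along $\bw=\omega\V\xi$ equals $\ee^{-\jj\omega\V\xi^\Top\V x_0}\widehat\rho_\mathrm{rad}(\omega)$. Taking the inverse 1D Fourier transform and using the shift property yields $\rho_\mathrm{rad}(t-\V\xi^\Top\V x_0)$. For the second identity, I would simply apply $\Op K_\mathrm{rad}$, whose radial frequency response is $c_d|\omega|^{d-1}$ with $c_d=(2(2\pi)^{d-1})^{-1}$ (Theorem~\ref{Theo:RadonS0}); the 1D Fourier transform of $\Op K_\mathrm{rad}\Op R\{\rho_\mathrm{iso}(\cdot-\V x_0)\}(\cdot,\V\xi)$ is then $c_d|\omega|^{d-1}\ee^{-\jj\omega\V\xi^\Top\V x_0}\widehat\rho_\mathrm{rad}(\omega)$, which inverts to $\tilde\rho_\mathrm{rad}(t-\V\xi^\Top\V x_0)$ by the definition of $\tilde\rho_\mathrm{rad}$.

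For the third identity, I would use $\widehat{\partial^{\V m}\rho_\mathrm{iso}}(\bw)=(\jj\bw)^{\V m}\widehat\rho_\mathrm{iso}(\bw)$. Restricting to the slice $\bw=\omega\V\xi$ gives $(\jj\omega\V\xi)^{\V m}\widehat\rho_\mathrm{rad}(\omega)=\V\xi^{\V m}(\jj\omega)^{|\V m|}\widehat\rho_\mathrm{rad}(\omega)$, and the inverse 1D Fourier transform together with the definition \eqref{Eq:FracDeriv} of $\Op D^{|\V m|}$ yields $\V\xi^{\V m}\Op D^{|\V m|}\{\rho_\mathrm{rad}\}(t)$.

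The main subtlety I expect is justifying the slice evaluation for $\rho_\mathrm{iso}\in\Spc S'_{\mathrm{Liz}}(\R^d)$: in general, $\widehat\rho_\mathrm{iso}$ is only a distribution and one cannot naively restrict to a ray. However, because $\rho_\mathrm{iso}$ is isotropic its Fourier transform is determined by a 1D radial profile $\widehat\rho_\mathrm{rad}\in\Spc S'(\R_{\geq 0})$, and one can justify \eqref{Eq:CentralSliceTheo} in the distributional sense by testing against $\phi\in\Spc S_{\mathrm{Liz},0}(\R\times\mathbb{S}^{d-1})$, writing $\langle \Op R\{\rho_\mathrm{iso}\},\phi\rangle_\mathrm{Rad}=\langle\rho_\mathrm{iso},\Op R^\ast\{\phi\}\rangle$ and moving to the Fourier side, where $\V\xi$-integration against the radial profile reduces the computation to the 1D identities above. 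Apart from this bookkeeping, the three identities are essentially direct consequences of the Fourier slice theorem and the multiplier representations of the operators involved.
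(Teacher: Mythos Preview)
Your proposal is correct and follows exactly the same approach as the paper: both derive all three identities as direct consequences of the Fourier slice theorem, and your treatment of the third identity (computing $\widehat{\partial^{\V m}\rho_{\mathrm{iso}}}(\omega\V\xi)=(\jj\omega\V\xi)^{\V m}\widehat\rho_{\mathrm{rad}}(\omega)=\V\xi^{\V m}(\jj\omega)^{|\V m|}\widehat\rho_{\mathrm{rad}}(\omega)$ and then inverting) is essentially verbatim what the paper does. In fact, the paper's proof is even more terse than yours---it only spells out the third identity and declares the others to be analogous---so your extra remarks on the first two identities and on the distributional justification of the slice are welcome elaborations rather than deviations.
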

\begin{proof} These identities are all direct consequences of the
	Fourier-slice theorem. For instance, by setting $\bw =\omega \boldsymbol \xi$ in the Fourier transform of $\partial^{\V m}\rho_{\mathrm iso}$, we get that
	\begin{align}
		\widehat{\partial^{\V m}\rho_{\mathrm iso}}(\omega \boldsymbol \xi)
		=(\jj \omega\boldsymbol \xi)^{\V m}\widehat \rho_\mathrm{rad}(\omega)=\boldsymbol \xi^{\V m} (\jj \omega)^{|\V m|} \widehat \rho_\mathrm{rad}(\omega),
	\end{align}
	which, upon taking the inverse 1D Fourier transform, yields \eqref{Eq:RadDeriv}. 
\end{proof}

\section{Extreme Points}\label{sec:Projection}
First, we recall the definition of extreme points.
\begin{definition}[Extreme points]
Let $C$ be a convex set in a Banach space $\Spc X$.
The extreme points of $C$ are the points 
$x\in C$ such that if there exist $x_1,x_2 \in C$ and $\theta \in (0,1)$ with $x= \theta x_1 + (1- \theta)x_2$, then it necessarily holds that $x_1 = x_2$.
The set of extreme points is denoted by $\mathrm{Ext}(C)$.

\end{definition}
\begin{proposition}[Isometric projections and extreme points]
\label{Prop:ExtremeProj}
Let $\Spc U$ be a closed subspace of the Banach space $(\Spc X,\|\cdot\|_\Spc X)$ with some corresponding continuous projection $\mathrm{Proj}_\Spc U\colon \Spc X \to \Spc U$.
Then, the following hold:
\begin{enumerate}
\item The unit ball in the Banach space $\Spc U=\mathrm{Proj}_{\Spc U}(\Spc X)$ satisfies
 \begin{equation}
 	B_{\Spc U}(1)\subseteq \mathrm{Proj}_{\Spc U}(B_{\Spc X}(1)) \subseteq B_{\Spc U}(\|\Proj_{\Spc U}\|),
 \end{equation}
 where $B_{\Spc U}(r)=\{x \in \Spc U: \|u\|_{\Spc U}\le r\}$ and $\|\Proj_{\Spc U}\|$ is the norm of the underlying projector. Consequently, $B_{\Spc U}(1)=\mathrm{Proj}_{\Spc U}(B_{\Spc X}(1))$ if and only if  $\|\Proj_{\Spc U}\|=1$.
\item 
Let $\tilde E=\{
\mathrm{Proj}_\Spc U\{e\}: e \in \mathrm{Ext}(B_\Spc X(1)) \} \backslash\{ 0\}$. If $\|\Proj_{\Spc U}\|=1$ and all
  $\tilde e \in \tilde E$ satisfy $\|\tilde e\|_{\Spc X}=1$, then  $B_\Spc U(1)$ is the closed convex hull of $\tilde E$ so that
$\mathrm{Ext}(B_\Spc U(1)) \subseteq \tilde E$.

\begin{extra}
Intersection of a bounded convex set with an affine hyperplane (codim 1): every extreme points of intersection is a convex combination of (at most) two original extreme points.
\end{extra}
\end{enumerate}
\end{proposition}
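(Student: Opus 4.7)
The plan for Item 1 is to verify both set inclusions directly. For $B_\Spc U(1) \subseteq \mathrm{Proj}_\Spc U(B_\Spc X(1))$, I would note that any $u \in B_\Spc U(1)$ already lies in $B_\Spc X(1)$ (the norm on the closed subspace $\Spc U$ being inherited from $\Spc X$) and satisfies $u = \mathrm{Proj}_\Spc U\{u\}$ because projectors are the identity on their range. The inclusion $\mathrm{Proj}_\Spc U(B_\Spc X(1)) \subseteq B_\Spc U(\|\mathrm{Proj}_\Spc U\|)$ is immediate from the definition of the operator norm. The equivalence in the last sentence of Item 1 then follows: equality of the two sets forces $\|\mathrm{Proj}_\Spc U\| \leq 1$, while $\|\mathrm{Proj}_\Spc U\| \geq 1$ is automatic for any nontrivial projector since it acts as the identity on any unit vector of $\Spc U$.

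For Item 2, I would work in the weak* topology throughout, since the intended applications (e.g.\ $\Spc X = \Spc M(\mathbb{T})$ with $\mathrm{Proj}_\Spc U = \Op P_0^\ast$) place us in the dual of a Banach space. By Banach--Alaoglu, $B_\Spc X(1)$ is weak*-compact, and by Krein--Milman it equals $\overline{\mathrm{co}}^{w^\ast}(\mathrm{Ext}(B_\Spc X(1)))$. Since $\mathrm{Proj}_\Spc U$ is weak*-to-weak* continuous (as an adjoint), the image $\mathrm{Proj}_\Spc U(B_\Spc X(1))$ is weak*-compact and equals $\overline{\mathrm{co}}^{w^\ast}(\mathrm{Proj}_\Spc U(\mathrm{Ext}(B_\Spc X(1))))$. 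Combined with Item 1 under the hypothesis $\|\mathrm{Proj}_\Spc U\|=1$, this gives $B_\Spc U(1) = \overline{\mathrm{co}}^{w^\ast}(\tilde E \cup \{0\})$; symmetry of the extreme-point set of a balanced ball places $0$ in the closed convex hull of $\tilde E$, so in fact $B_\Spc U(1) = \overline{\mathrm{co}}^{w^\ast}(\tilde E)$. Milman's partial converse to Krein--Milman then yields $\mathrm{Ext}(B_\Spc U(1)) \subseteq \overline{\tilde E}^{w^\ast}$, which reduces to $\tilde E$ whenever the latter set is already weak*-closed, as is the case in the periodic Lizorkin application where $\tilde E = \{\pm \delta_0(\cdot - t_0): t_0 \in \mathbb{T}\}$ is parametrized by the compact torus. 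The norm-one condition on elements of $\tilde E$ is used here to ensure that these projected atoms really do sit on the unit sphere of $\Spc U$, so that the conclusion $\mathrm{Ext}(B_\Spc U(1)) \subseteq \tilde E$ is not vacuously restricted away from interior points.

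The main obstacle I expect is topological bookkeeping rather than any combinatorial difficulty: without dual-space structure and weak*-continuity of $\mathrm{Proj}_\Spc U$, both the Krein--Milman representation of $B_\Spc X(1)$ and the preservation of closed convex hulls under the projector can fail, and then neither ingredient of the argument above survives. Restricting attention to weak*-continuous adjoint projectors on dual Banach spaces, which matches the paper's applications precisely, sidesteps this issue. The only remaining subtlety is to justify that passing from $\tilde E \cup \{0\}$ to $\tilde E$ does not shrink the closed convex hull, and this is a one-line symmetry argument using that $-e \in \mathrm{Ext}(B_\Spc X(1))$ whenever $e$ is.
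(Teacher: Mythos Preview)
Your approach matches the paper's: both parts use exactly the argument you outline---Item~1 via the two direct inclusions, Item~2 via Krein--Milman on $B_{\Spc X}(1)$ followed by pushing the closed convex hull through the continuous projector, and then reading off the extreme-point containment. Your version is in fact more careful than the paper's, which invokes Krein--Milman and ``continuity'' without naming the topology or the dual-space hypothesis; your explicit restriction to weak*-continuous adjoint projectors and your observation that Milman's converse only gives $\mathrm{Ext}(B_{\Spc U}(1)) \subseteq \overline{\tilde E}^{w^\ast}$ (with closedness of $\tilde E$ needed for the stated conclusion) are genuine clarifications that the paper leaves implicit.
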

\begin{proof}
For the first statement, note that the unit ball in $\Spc U$ is $B_{\Spc U}(1)=B_{\Spc X}(1)\cap \Spc U$.
In particular, $u=\mathrm{Proj}_{\Spc U} \{u\}$ and $\|u\|_{\Spc X}\le 1$ for any $u \in B_{\Spc U}(1)$, which implies that $B_{\Spc U}(1) \subseteq \mathrm{Proj}_{\Spc U}(B_{\Spc X}(1))$.
Next, we recall that the norm of $\Proj_{\Spc U}\colon \Spc X \to \Spc U$ is given by
\begin{equation}
	\|\Proj_{\Spc U}\|=\sup_{x \in \Spc X \backslash \{0\}} \frac{\|\Proj_{\Spc U}\{x\}\|_{\Spc X}}{\|x\|_{\Spc X}}.
\end{equation}
Therefore, any $x \in B_{\Spc X}(1)$ satisfies
$\|\Proj_{\Spc U}\{x\}\|_{\Spc X}\le \|\Proj_{\Spc U}\| \, \|x\|_{\Spc X}\le \|\Proj_{\Spc U}\|$,
which implies that $\mathrm{Proj}_{\Spc U}(B_{\Spc X}(1)) \subseteq B_{\Spc U}(\|\Proj_{\Spc U}\|)$.

The Krein-Milman theorem ensures that $B_\Spc X(1)$ 
is the closed convex
hull of its extreme points $e_k \in E=\mathrm{Ext}(B_\Spc X(1))$, namely $B_\Spc X(1)=\mathrm{cch}E$.
Due to $\|\Proj_\Spc U\|=1$, it holds that $B_{\Spc U}(1)=\mathrm{Proj}_{\Spc U}(B_{\Spc X}(1))$.
Further, as $B_{\Spc U}(1)$ is convex, each $u=\mathrm{Proj}_{\Spc U}(\sum_{k=1}^K \theta_k e_k)= \sum_{k=1}^K \theta_k \tilde e_k$ with $\theta_k\ge 0$, $\sum_{k=1}^K \theta_k=1$, and 
$e_k \in E$ lies in $
B_{\Spc U}(1)$.
In other words, the convex hull of the $e_k$ maps onto the convex hull of the $\tilde e_k=\mathrm{Proj}_{\Spc U} e_k$ with $\mathrm{ch}\{\tilde e_k\}\subseteq B_\Spc U(1)$. Since $\mathrm{Proj}_{\Spc U}$ is continuous
and $B_\Spc X(1)$ is closed, the argument carries over to limits as well. Hence, it holds that
$B_\Spc U(1) = \Proj_\Spc U(\mathrm{cch}E)=\mathrm{cch}(\Proj_\Spc U(E))=\mathrm{cch}(\tilde E)$.
\begin{extra}
Now let us assume that $\|\Proj_\Spc U\|=r\ge 1$ with all $\tilde e_k$ such that $\|\tilde e_k\|_{\Spc X}=r$.
Since $B_\Spc U(1) \subseteq \Proj_\Spc U(\mathrm{cch}E)$, for any $u \in \partial B_\Spc U(1)$, there exists some $\theta_k$ such that $u=\sum_{k} \theta_k \tilde e_k$ with
$\|u\|=1\le \sum_{k} \theta_k \|\tilde e_k\|$.  ???
\end{extra}
\end{proof}

\begin{example}
	The projector $\Proj_{\mathrm even}\colon \Spc S(\R^d) \to  \Spc S_{\mathrm even}(\R^d)$ onto the even Schwartz functions is given by
	\begin{align}
		\Proj_{\mathrm even}\{f\}(\V x)= \frac{f(\V x) +f(-
			\V x)}{2}.
	\end{align}
	By duality, we define $\Proj_{\mathrm even}\colon \Spc S'(\R^d) \to  \Spc S'_{\mathrm even}(\R^d)$.
	The extreme points of $\Spc M(\R^d)$ are $(\delta(\cdot-\V \tau))_{\V \tau\in \R^d}$.
	Since
	\[\|\Proj_{\mathrm even}\{\delta(\cdot-\V \tau)\}\|_{\Spc M}= \|\tfrac{1}{2}\delta(\cdot+\V \tau) +\tfrac{1}{2}\delta(\cdot-\V \tau)\|_{\Spc M}=1\]{l}
	for all $\V \tau \in \R^d$, the extreme points of
	$\Spc M_{\mathrm even}(\R^d)$ are of the form $\tfrac{1}{2}\delta(\cdot+\V \tau) +\tfrac{1}{2}\delta(\cdot-\V \tau)$ with ${\V \tau \in \R^d}$.
\end{example}

\begin{extra}
\subsubsection{Duality maps}
The generalization of the Cauchy-Schwarz inequality for any dual pair of Banach spaces $(\Spc X, \Spc X')$ is
the so-called {\em duality inequality}
\begin{align}
\label{Eq:Dualitybound}
\forall (f,x) \in \Spc X' \times \Spc X: \langle f, x\rangle_{\Spc X' \times \Spc X}\le \|f\|_{\Spc X'}\|x\|_{\Spc X},
\end{align}
which is tightly linked to the definition of the dual norm: 
\begin{align}
\label{Eq:DualNorm}
\|f\|_{\Spc X'}\eqdef\sup_{x \in \Spc X: \|x\|_{\Spc X}\le 1} \langle f, x\rangle_{\Spc X' \times \Spc X}.
\end{align}
\begin{definition}[Strict convexity]
A Banach space $\Spc X$ 
is said to be strictly convex if, for all $x_1,x_2 \in \Spc X$ such that $\|x_1\|_{\Spc X} =\|x_2\|_{\Spc X} =1$
and $x_1 \ne x_2$, one has that $\|\theta x_1+(1-\theta)x_2\|_{\Spc X}< 1$ for all
$\theta \in (0,1)$.
\end{definition}

It is obvious from \eqref{Eq:DualNorm} that \eqref{Eq:Dualitybound} is sharp. Moreover, when $\Spc X$ 
is reflexive and strictly convex, there is a single element $x^\ast \in \Spc X'$ (the Banach conjugate of $x$)
such that $\|x^\ast\|_{\Spc X'}=\|x\|_{\Spc X}$ (isometry) and
$\langle x^\ast, x\rangle_{\Spc X' \times \Spc X}=\|x^\ast\|_{\Spc X'}\|x\|_{\Spc X}$ (sharp duality bound) \cite{Cioranescu2012}. This leads to the definition of the corresponding duality map $\Op J_\Spc X: \Spc X \to \Spc X'$:
\begin{align}
\Op J_{\Spc X}\{x\}=x^\ast.
\end{align}
Since the dual of $\Spc X'$ is strictly convex as well, we have that $\Op J_\Spc X^{-1}=\Op J_{\Spc X'}: \Spc X' \to \Spc X$ with $\Op J_{\Spc X'}\{x^\ast\}=x$ where $(x^\ast)^\ast=x \in \Spc X''=\Spc X$
is the unique Banach conjugate of $x^\ast \in \Spc X'$.

A relevant example of reflexive and strictly convex Banach space is $\Spc X=L_q(\R \times \mathbb{S}^{d-1})$ for $q \in (1,\infty)$.
Its topological dual is
$\Spc X'=L_p(\R \times \mathbb{S}^{d-1})$ with $p=q/(q-1)$ the conjugate exponent of $q$.
For that particular pair,
\eqref{Eq:Dualitybound} reduces to the H\"older inequality for hyperspherical functions.
The corresponding duality map $\Op J_q: L_q(\R \times \mathbb{S}^{d-1}) \to L_p(\R \times \mathbb{S}^{d-1})$
is (see \cite[Chap.\ 4]{Cioranescu2012})
\begin{align}
\label{Eq:LpDual}
\Op J_{q}\{\nu\}(\V z)=\nu^\ast(\V z)=\frac{\left|\nu(\V z)\right|^{q-1}}{\|\nu\|^{q-2}_{L_q}} \mathrm{sign}\big(\nu(\V z)\big).
\end{align}
The latter establishes a one-to-one isometric mapping between the spaces $L_q$ and 
$L_p=(L_q)'$ with the property that $\Op J^{-1}_{q}=\Op J_{p}$.

\begin{proposition}[Banach isometries]
\label{Prop:DualMaps}
Let $(\Spc X, \Spc X')$ be a dual pair of reflexive and strictly convex Banach spaces with corresponding duality map $\Op J_\Spc X: \Spc X \to \Spc X'$. We consider two generic types of linear isometries:

1) One-to-one map: Let $\Op T: \Spc X \to \Spc Y=\Op T(\Spc X)$ be an injective operator whose inverse is denoted by $\Op T^{-1}$ with $\Op T^{-1}\Op T=\Identity$ on $\Spc X$. Then, $\Spc Y=\Op T(\Spc X)=\{y=\Op T\{x\}: x \in \Spc X\}$ equipped with the norm $\|y\|_{\Spc Y}=\|\Op T^{-1}\{y\}\|_{\Spc X}$ is a Banach space. Its 
continuous dual is the Banach space $\Spc Y'=\Op T^{-1\ast}(\Spc X')$ with
$\|y'\|_{\Spc Y'}=\| \Op T^\ast\{ y'\}\|_{\Spc X'}$, while the
corresponding duality map is
$\Op J_{\Spc Y}= (\Op T^\ast)^{-1}\circ \Op J_{\Spc X}\circ\Op T^{-1}: \Spc Y \to \Spc Y'$.

2) Projection: Let $\Op P: \Spc X \to \Spc U \embedIso \Spc X$ be a continuous projection on $\Spc X$ with $\|\Op P\|=1$. Then, $\big(\Spc U=\Op P(\Spc X),\Spc U'=\Op P^\ast(\Spc X')\big)$ is a dual pair of Banach subspaces with corresponding duality map
$\Op J_{\Spc U}=\Op P^\ast\circ \Op J_{\Spc X}\circ\Op P: \Spc U \to \Spc U'$.

\end{proposition}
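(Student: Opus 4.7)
The plan is to treat both parts of the proposition by a common template: in each case I verify (i) that the proposed space carries a Banach-space structure with the stated norm (and inherits reflexivity and strict convexity from $\Spc X$), (ii) that its continuous dual is isometrically identified with the stated image $\Op T^{-1\ast}(\Spc X')$ resp.\ $\Op P^\ast(\Spc X')$, and (iii) that the candidate formula for the duality map satisfies the two defining properties of the Banach conjugate: isometry of norms and saturation of the duality bound. Uniqueness of the Banach conjugate in a reflexive strictly convex space then forces the candidate to coincide with $\Op J_\Spc Y$ resp.\ $\Op J_\Spc U$.

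For Part 1, the formula $\|y\|_\Spc Y = \|\Op T^{-1}\{y\}\|_\Spc X$ is a norm because $\Op T$ is linear and injective, and by construction $\Op T\colon \Spc X \to \Spc Y$ is an isometric isomorphism, so completeness, reflexivity, and strict convexity of $\Spc X$ transfer verbatim. The dual identification follows from the direct computation
\begin{align}
\|y'\|_{\Spc Y'} = \sup_{\|y\|_\Spc Y \leq 1}\langle y',y\rangle = \sup_{\|x\|_\Spc X \leq 1}\langle y',\Op T\{x\}\rangle = \sup_{\|x\|_\Spc X \leq 1}\langle \Op T^\ast\{y'\},x\rangle = \|\Op T^\ast\{y'\}\|_{\Spc X'},
\end{align}
which shows that $\Op T^\ast\colon \Spc Y' \to \Spc X'$ is an isometry onto $\Spc X'$, so $\Spc Y' = (\Op T^\ast)^{-1}(\Spc X') = \Op T^{-1\ast}(\Spc X')$. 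For the duality map, given $y \in \Spc Y$ with $x \eqdef \Op T^{-1}\{y\}$, set $y^\ast \eqdef (\Op T^\ast)^{-1}\Op J_\Spc X\{x\}$. Then $\|y^\ast\|_{\Spc Y'} = \|\Op J_\Spc X\{x\}\|_{\Spc X'} = \|x\|_\Spc X = \|y\|_\Spc Y$ and $\langle y^\ast, y\rangle = \langle \Op J_\Spc X\{x\}, x\rangle = \|x\|_\Spc X\,\|\Op J_\Spc X\{x\}\|_{\Spc X'} = \|y\|_\Spc Y\,\|y^\ast\|_{\Spc Y'}$, so uniqueness of the Banach conjugate yields $\Op J_\Spc Y\{y\} = y^\ast$.

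For Part 2, the hypothesis $\Spc U \embedIso \Spc X$ means $\|\cdot\|_\Spc U$ is inherited from $\|\cdot\|_\Spc X$; reflexivity and strict convexity pass to closed subspaces, and $\Spc U = \ker(\Identity - \Op P)$ is closed by continuity of $\Op P$, so $\Spc U$ is a reflexive strictly convex Banach space. The key identification $\Spc U' = \Op P^\ast(\Spc X')$ uses $\|\Op P\| = 1$: by Proposition~\ref{Prop:ExtremeProj}, $\Op P(B_\Spc X(1)) = B_\Spc U(1)$, and therefore
\begin{align}
\|\Op P^\ast\{x'\}\|_{\Spc X'} = \sup_{\|x\|_\Spc X \leq 1}\langle x',\Op P\{x\}\rangle = \sup_{u \in B_\Spc U(1)}\langle x',u\rangle = \|x'|_\Spc U\|_{\Spc U'},
\end{align}
so restriction to $\Spc U$ is an isometry from $\Op P^\ast(\Spc X')$ onto $\Spc U'$ (surjectivity follows because any $u' \in \Spc U'$ lifts to $u'\circ \Op P \in \Op P^\ast(\Spc X')$). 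The duality map is then produced as in Part 1: for $u \in \Spc U$, since $\Op P\{u\} = u$, the candidate $\tilde u \eqdef \Op P^\ast \Op J_\Spc X\{u\}$ satisfies $\|\tilde u\|_{\Spc U'} = \|\Op J_\Spc X\{u\}|_\Spc U\|_{\Spc U'} = \|u\|_\Spc U$ (upper bound from $\|\Op P^\ast\| = 1$, lower bound from testing against $u/\|u\|_\Spc U$) and $\langle \tilde u, u\rangle_{\Spc U'\times\Spc U} = \langle \Op J_\Spc X\{u\}, u\rangle = \|u\|_\Spc X^2 = \|u\|_\Spc U\,\|\tilde u\|_{\Spc U'}$, so uniqueness identifies $\tilde u = \Op J_\Spc U\{u\} = \Op P^\ast \Op J_\Spc X \Op P\{u\}$.

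The main obstacle I expect is the clean isometric identification $\Spc U' = \Op P^\ast(\Spc X')$ in Part 2; the standard description of the dual of a subspace is the quotient $\Spc X'/\Spc U^\perp$, and it is precisely the norm-one hypothesis $\|\Op P\| = 1$ (via Proposition~\ref{Prop:ExtremeProj}) that makes $\Op P^\ast(\Spc X')$ a canonical isometric copy of $\Spc U'$ rather than merely a set of Hahn--Banach extensions. Once this identification is in place, both duality-map formulas reduce to the same mechanical check of the two conjugate-defining properties, and the rest of the proposition is bookkeeping.
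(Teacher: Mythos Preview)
Your proof is correct and follows essentially the same approach as the paper: in both parts you transport the Banach-space structure through the given isometry and then verify that the candidate conjugate satisfies the two defining properties (norm equality and saturation of the duality bound), appealing to uniqueness in the strictly convex reflexive setting. The paper's Part~1 is delegated to an external reference and its Part~2 packages the isometry and saturation into a single sandwich inequality $\|u^\ast\|_{\Spc X'}\|u\|_{\Spc X}=\langle \Op P^\ast u^\ast,\Op P u\rangle\le \|\Op P^\ast\|\,\|u^\ast\|\,\|\Op P\|\,\|u\|$, whereas you separate these steps and invoke Proposition~\ref{Prop:ExtremeProj} explicitly for the ball identity; these are cosmetic differences, not substantive ones.
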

\begin{proof}
We first recall that the dual of a reflexive and strictly convex Banach space is reflexive (by definition) and strictly convex as well.
\item 1) Injective operator: 
For the first property, we refer to \cite[Prop.\ 1]{Unser2022}. The key observation is that the operators $\Op T: \Spc X \to \Spc Y$ and 
$\Op T^{-1}: \Spc Y \to \Spc X$, as well as their adjoints are isometries with
$(\Op T^\ast)^{-1}=\Op T^{-1\ast}$. The argument then primarily rests upon the following duality inequality 
\begin{align}
\langle y', y\rangle_{\Spc Y' \times \Spc Y}=\langle y', \Op T\Op T^{-1}\{y\}\rangle_{\Spc Y' \times \Spc Y}&=\langle \Op T^\ast\{y'\}, \Op T^{-1}\{y\}\rangle_{\Spc X' \times \Spc X}\nonumber \\
&\le \|\Op T^\ast\{y'\}\|_{\Spc X'}\; \|\Op T^{-1}\{y\}\|_{\Spc X},
\end{align}
which is sharp if and only if $x=\Op T^{-1}\{y\}$ and $x'=\Op T^\ast\{y'\}$ (resp., $y'$ and $y$)
are Banach conjugates; that is, $x'=\Op J_{\Spc X}\{x\}$.

\item 2) Projection operator. The first part is covered by Proposition~\ref{Prop:ExtremeProj}.
For the second part, let $u \in \Spc U\embedIso \Spc X$ with Banach conjugate
$u^\ast \in \Spc X'$. Then,
\begin{align}
\|u^\ast\|_{\Spc X'}\|u\|_{\Spc X}
=\langle u^\ast, u\rangle_{\Spc X' \times \Spc X} & =
\langle u^\ast, \Op P^2 u\rangle_{\Spc X' \times \Spc X}=\langle \Op P^\ast u^\ast, \Op P \Op u\rangle_{\Spc U' \times \Spc U}\nonumber\\
& \ \le \|\Op P^\ast\| \|u^\ast\|_{\Spc U'} \|\Op P\| \|u\|_{\Spc U},
\end{align}
from which  we deduce that $\langle \Op P^\ast u^\ast, \Op P \Op u\rangle_{\Spc U' \times \Spc U}=\|u^\ast\|_{\Spc U'} \|u\|_{\Spc U'}$; that is, that $u=\Op P u$ and $u^\ast=\Op P^\ast u^\ast=
\Op P^\ast \circ \Op J_{\Spc X}\{\Op P u\}$ are (unique) Banach conjugates of each other.
\end{proof}
\end{extra}

\bibliographystyle{abbrv}
\bibliography{Lizorkin.bib}

\end{document}